\numberwithin{equation}{section}
\numberwithin{figure}{section}
\newtheorem{lemma}{Lemma}[section]
\newtheorem{theorem}[lemma]{Theorem}
\newtheorem{corollary}[lemma]{Corollary}
\newtheorem{proposition}[lemma]{Proposition}
\newtheorem{definition}{Definition}[section]
\theoremstyle{remark}
\newtheorem{remark}{Remark}[section]
\newcommand{\D}{\mathbb{D}}
\newcommand{\half}{\mathbb{H}}
\newcommand{\R}{\mathbb{R}}
\newcommand{\N}{\mathbb{N}}
\newcommand{\prob}[2]{\mathbf{P}^{#1} \left \{ #2 \right \}}
\newcommand{\abs}[1]{\left | #1  \right |}
\newcommand{\paren}[1]{\left ( #1 \right )}
\newcommand{\bracket}[1]{\left [ #1 \right ]}
\newcommand{\set}[1]{\left \{ #1 \right \}}
\newcommand{\ev}[2]{\mathbf{E}^{#1} \left[ #2 \right ] }
\DeclareMathOperator{\hm}{hm}
\DeclareMathOperator{\imag}{Im}
\newcommand{\C}{\mathbb{C}}
\begin{document}
\author{Shawn Drenning}
\title {Excursion Reflected Brownian Motion}
\date{\today}

\begin{abstract}
Excursion reflected Brownian motion (ERBM) is a strong Markov process defined in a finitely connected domain $D \subset \C$ that behaves like a Brownian motion away from the boundary of $D$ and picks a point according to harmonic measure from infinity to reflect from every time it hits a boundary component.  We give a construction of ERBM using its conformal invariance and develop the basic theory of its harmonic functions.  One important reason for studying ERBM is the hope that it will be a useful tool in the study of SLE in multiply connected domains.  To this end, we develop the basic theory of the Poisson kernel and Green's function for ERBM and show how it can be used to construct conformal maps into certain classes of multiply connected domains.
\end{abstract}
\maketitle

\section{Introduction}

\subsection{Motivation and Results}

Roughly speaking, if $D \subset \C$ is a domain with $n$ ``holes,'' \emph{excursion reflected Brownian motion} (ERBM) is a strong Markov process that has the distribution of a Brownian motion away from $\partial D$ and picks a point according to \emph{harmonic measure from $\infty$} to reflect from every time it hits $\partial D$.  To understand the behavior of ERBM, we consider the case that $D=\C \backslash \D$. Intuitively, ERBM in $\C \backslash \D$ can be constructed by taking a reflected Brownian motion and rotating each excursion from $\D$ by an angle chosen uniformly from $[0,2\pi)$.  ERBM has what Walsh (\cite{MR509476}, pg. 37) has called a ``roundhouse singularity'' in a neighborhood of $\D$.  That is, in any neighborhood of a time that it hits $\partial \D$, it will hit $\partial \D$ uncountably many times and jump randomly from point to point on $\partial \D$.  An important property of ERBM (that we will use as part of our definition) is that it is conformally invariant.  This will be clear once we more precisely define what it means to ``pick a point according to harmonic measure from $\infty$ to reflect from.''

An important reason to consider ERBM is that it arises naturally when studying conformal maps into certain classes of multiply connected domains.  A classical theorem of complex analysis states that if $D\subset \C$ is an $n$-connected domain and $w\in \partial D$, then there is a conformal map $f=u+iv$ from $D$ onto the upper half-plane with $n$ horizontal line segments removed satisfying $f\paren{w}=\infty$.  If $n=0$, then $v$ is a positive harmonic function that vanishes on $\partial D$ except at $w$. It is well-known that this characterizes $v$ as being a real multiple of the Poisson kernel for Brownian motion $H_D\paren{\cdot,w}$.  If $n>0$, then it is well-known \cite{MR1344449} that $v$ is a harmonic function that is constant on each boundary component of $D$ and that for any smooth Jordan curve $\eta \subset D$ we have
\begin{equation}\label{conCond}
\int_{\eta} \frac{d}{dn}~v\paren{z}~\abs{dz}=0,
\end{equation}
where $n$ is the outward pointing unit normal.  Let $w\in A_0, A_1, \ldots, A_n$ be the connected components of the boundary of $D$. An easy calculation \cite{MR2247843} shows that \eqref{conCond} holds exactly if for every $A_i$, $i\neq 0$, and smooth Jordan curve $\eta\subset D$ with $A_i$ (and no other boundary component) in its interior, we have
\begin{equation}\label{IntroERHar}
v\paren{A_i}=\int_{\eta} v\paren{z} \frac{H_{\partial U}\paren{A_i,z}}{\mathcal{E}_{U}\paren{A_i,\eta}}~\abs{dz},
\end{equation}
where $U$ is the region bounded by $\partial A_i$ and $\eta$, $H_{\partial U}$ is the boundary Poisson kernel, and $\mathcal{E}_{U}$ is excursion measure (see Section \ref{sectBG}).  This suggests that $v$ is a real multiple of the Poisson kernel of a stochastic process with state space $D \cup \set{A_0,\ldots, A_n}$ that has the distribution of a Brownian motion in $D$ and started at $A_i$, a density for the distribution of where it first hits $\eta$ is $\frac{H_{\partial U}\paren{A_i,z}}{\mathcal{E}_{U}\paren{A_i,\eta}}$.  We essentially define ERBM to be such a process.

The existence of a process similar to ERBM follows from more general work of Fukushima and Tanaka in \cite{MR2139028}.  Their work uses the theory of Dirichlet forms and does not take advantage of the conformal invariance of ERBM.  An alternative construction making explicit use of the conformal invariance of ERBM was proposed by Lawler in \cite{MR2247843}.  He proposed that ERBM could be defined in any domain with ``one hole'' by first constructing the process in $\C \backslash \D$ using excursion theory and then defining it in any domain conformally equivalent to $\C \backslash \D$ via conformal invariance.  To define ERBM in a domain with ``$n$ holes,'' (or more generally, countably many holes) multiple copies of the process defined in a domain with ``one hole'' can be pieced together.  We take this basic approach and give a new construction of ERBM.

A function is \emph{ER-harmonic} if it satisfies the mean value property with respect to ERBM.  More precisely, a function $u$ is ER-harmonic if it is harmonic on $D$ and \eqref{IntroERHar} holds.  Two important ER-harmonic functions are the Poisson kernel $H_D^{ER}\paren{z,w}$ and Green's function $G_D^{ER}\paren{z,w}$ for ERBM. In order to define these functions, it is necessary to choose at least one boundary component of $D$ at which to kill the ERBM.  Once this is done, the definitions and many of the properties of the Poisson kernel and Green's function for ERBM are similar to those for usual Brownian motion.  The Poisson kernel for ERBM was first considered by Lawler in \cite{MR2247843} as a way of understanding a classical theorem \cite{MR510197} of complex analysis stating that any $n$-connected domain $D \subset \C$ is conformally equivalent to a domain obtained by removing $n$ horizontal line segments from $\half$.  He sketched a proof showing that the imaginary part of any such map is equal to a real multiple of the Poisson kernel for ERBM.  We give a complete proof here.  Furthermore, we use the Green's function for ERBM to prove two other classical conformal mapping theorems.

\subsection{Outline of the Paper}

Section \ref{sectBG} sets notation and contains some necessary background material.  In Section \ref{sectERBM} we define and construct ERBM in finitely connected domains. First, we construct the process in $\C \backslash \D$ by explicitly defining a semigroup for ERBM in terms of the semigroups for Brownian motion and reflected Brownian motion and then using general theory to show that there actually is a strong Markov process with this semigroup.  Finally, we check that the strong Markov process we obtain satisfies our definition of ERBM.  Our construction is motivated by a similar construction of Walsh's Brownian motion in \cite{MR1022917}.  Once we have ERBM in $\C \backslash \D$, we define ERBM in any domain conformally equivalent to $\C \backslash \D$ via conformal invariance. In Section \ref{chapERBMFC} we construct ERBM in finitely connected domains by computing what its infinitesimal generator would be if it existed and then using general theory to show that there actually is a Feller-Dynkin process with that infinitesimal generator.  ERBM in a finitely connected domain induces a discrete time Markov chain on the connected components of the boundary of $D$, which we discuss in Section \ref{chapMC}.  This chain was observed by Lawler in \cite{MR2247843} and appears implicitly in classical work on conformal mapping of multiply connected domains.  We conclude the section with a brief discussion of the harmonic functions associated with ERBM, which we call \emph{ER-harmonic} functions.  We prove a maximal principle for ER-harmonic functions and show how ERBM can be used to construct ER-harmonic functions.


We discuss the Poisson kernel and Green's function for ERBM in Sections \ref{sectPK} and \ref{sectGF} respectively.  We prove some of their basic properties and show how they can be used to construct conformal maps into certain classes of finitely connected domains.




I would like to thank my thesis advisor Greg Lawler for suggesting this line of research and for many useful conversations pertaining to it.

\section{Background} \label{sectBG}
\subsection{Some Notation}

We denote the unit disk in $\C$ centered at the origin by $\D$ and the upper half-plane by $\half$. We let $\mathcal{Y}_n$ consist of all subdomains of $\C$ with $n$ ``holes.''  More precisely, let $\mathcal{Y}_n$ consist of all connected domains of the form 
\[D=\C \backslash \bracket{A_0 \cup A_1 \cup \cdots \cup A_n},\]
where $A_0, A_1, \ldots ,A_n$ are closed disjoint subsets of $\C$ such that $A_i$ is simply connected, bounded, and larger than a single point for $1 \leq i \leq n$ (we allow $A_0$ to be empty) and $\C \backslash A_0$ is simply connected.  We will often think of $A_0\cup \set{\infty}$ as being a single point at infinity (the point we need to add to make $D\cup \set{A_1,\ldots, A_n}$ with its quotient topology compact).  We denote $\displaystyle \bigcup_{i=0}^{\infty} \mathcal{Y}_i$ by $\mathcal{Y}$.


We denote the open annulus centered at $0$ with inner radius $r$ and outer radius $R$ by $A_{r,R}$ and the open ball of radius $r$ centered at $z$ by $B_r\paren{z}$.

If $E$ is a locally compact Hausdorff space and $E_{\partial}=E\cup \set{\partial}$ is the one-point compactification of $E$, we denote by $C_0\paren{E}$ the set of all continuous real-valued functions on $E$ that vanish at $\partial$.  If $D \in \mathcal{Y}$, we denote by $C^{\infty}\paren{D}$ the set of all infinitely differentiable functions on $D$.

We will use $c$ to denote a real constant that is allowed to change from one line to the next.  We write $f\paren{z} \sim g\paren{z}$ as $z \rightarrow a$ if $\lim_{z \rightarrow a}\frac{f\paren{z}}{g\paren{z}}=1$. 

\subsection{Poisson Kernel for Brownian Motion}
Let $D \in \mathcal{Y}$ and let $\tau_D$ be the first time that a Brownian motion $B_t$ leaves $D$.  If $\partial D$ has at least one regular point for Brownian motion, then for each $z \in D$, the distribution of $B_{\tau_D}$ defines a measure $\hm_D\paren{z,\cdot}$ on $\partial D$ (with the $\sigma$-algebra generated by Borel subsets of $\partial D$) called \emph{harmonic measure in $D$ from $z$}.  We say $\partial D$ is \emph{locally analytic} at $w \in \partial D$ if $\partial D$ is an analytic curve in a neighborhood of $w$.  If $\partial D$ is locally analytic at $w$, then in a neighborhood of $w$, $\hm_D\paren{z,\cdot}$ is absolutely continuous with respect to arc length and the density of $\hm_D\paren{z,\cdot}$ at $w$ with respect to arc length is called the \emph{Poisson kernel for Brownian motion} and is denoted $H_D\paren{z,w}$.  If $w$ is a two-sided boundary point, we should really think of it as being two distinct boundary points, $w^+$ and $w^-$.  In such cases, by abuse of notation, we will sometimes write $H_D\paren{z,w}$ when we should consider $H_D\paren{z,w^+}$ and $H_D\paren{z,w^-}$ separately.

Harmonic measure is conformally invariant.  That is, if $f: D \rightarrow D'$ is a conformal map, then 
\[\hm_D\paren{z,V}=\hm_{D'}\paren{f\paren{z},f\paren{V}}.\]
Using this, we see that if $\partial D$ is locally analytic at $w$ and $\partial D'$ is locally analytic at $f\paren{w}$, then
\begin{equation} \label{PKCI}
H_{D'}\paren{f\paren{z},f\paren{w}}=\abs{f'\paren{w}}^{-1}H_D\paren{z,w}.
\end{equation}
It is well-known that
\begin{equation}\label{PKhalf}
H_{\half}\paren{x+iy,x'}=\frac{1}{\pi} \frac{y}{\paren{x-x'}^2+y^2}.
\end{equation}
A useful fact \cite{MR2247843} that we will use is that if $D_2 \subset D_1$ and $\partial D_1$ and $\partial D_2$ agree and are locally analytic in a neighborhood of $w \in \partial D_1$, then
\begin{equation} \label{twoDomainPK}
H_{D_2}\paren{z,w}=H_{D_1}\paren{z,w}-\ev{z}{H_{D_1}\paren{B_{\tau_{D_2}},w}}.
\end{equation}

The function $H_D\paren{\cdot,w}$ can be characterized up to a positive multiplicative constant as the unique positive harmonic function on $D$ that is ``equal to'' the Dirac delta function at $w$ on $\partial D$. 

\begin{proposition}\label{noSubLinHar}
Let $D \in \mathcal{Y}$ be such that $\partial D$ is locally analytic at $w\in \partial D$.  Then $H_D\paren{\cdot,w}$ is up to a real constant multiple the unique positive harmonic function on $D$ that satisfies $H_D\paren{z,w} \rightarrow 0$ as $z \rightarrow w'$ for any $w' \in \partial D$ not equal to $w$.
\end{proposition}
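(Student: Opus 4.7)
Let $h$ be a positive harmonic function on $D$ with $h(z) \to 0$ as $z \to w'$ for every $w' \in \partial D$ with $w' \neq w$. The plan is to extract a nonnegative constant $c$ that captures the prescribed singularity of $h$ at $w$, and then to show that the remainder $g = h - c\, H_D(\cdot, w)$ vanishes identically via a maximum principle argument. The work splits into a local analysis at $w$ (producing $c$) and a global comparison argument.

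For the local step, I exploit the local analyticity of $\partial D$ at $w$ together with conformal invariance. Pick a conformal map $\varphi$ from a neighborhood $V$ of $w$ in $\overline{D}$ onto the half-disk $\half \cap \D$, with $\varphi(w) = 0$ and $\varphi(V \cap \partial D) = (-1, 1)$. The pullback $\tilde h := h \circ \varphi^{-1}$ is positive and harmonic on $\half \cap \D$ and vanishes continuously on $(-1, 1) \setminus \set{0}$. Extending by Schwarz reflection, $\tilde h(\bar z) := -\tilde h(z)$ for $\imag z < 0$, produces a harmonic function on the punctured disk $\D \setminus \set{0}$ that is odd under conjugation, so its Laurent--Fourier expansion takes the form
\[
\tilde h\paren{r e^{i\theta}} = \sum_{n \geq 1} \paren{\alpha_n r^n + \beta_n r^{-n}} \sin(n \theta).
\]
Positivity on the upper half-disk forces $\beta_n = 0$ for $n \geq 2$: otherwise the highest-order negative power would dominate as $r \to 0$, and $\sin(n\theta)$ changes sign on $(0, \pi)$ for $n \geq 2$, contradicting $\tilde h \geq 0$. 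The only admissible singular term is thus $\beta_1 \sin\theta / r$, which is a nonnegative multiple of $\pi H_\half\paren{\cdot, 0}$. Recalling \eqref{twoDomainPK} and pulling back via $\varphi$ using the conformal invariance \eqref{PKCI} of the Poisson kernel, I obtain
\[
h = c\, H_D\paren{\cdot, w} + g_0 \qquad \text{on } V \cap D,
\]
where $c \geq 0$ and $g_0$ extends harmonically across $V \cap \partial D$, hence is bounded near $w$.

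For the global step, define $g := h - c\, H_D\paren{\cdot, w}$ on $D$. Then $g$ is harmonic on $D$, tends to $0$ at every $w' \in \partial D \setminus \set{w}$ (since both $h$ and $H_D\paren{\cdot, w}$ do), and is bounded in a neighborhood of $w$ by the local analysis. To conclude $g \equiv 0$, compare with $\pm \epsilon\, H_D\paren{\cdot, w}$ for $\epsilon > 0$: the harmonic function $\epsilon H_D\paren{\cdot, w} \pm g$ has nonnegative boundary limits on $\partial D \setminus \set{w}$ and tends to $+\infty$ at $w$ (because $H_D\paren{\cdot, w} \to \infty$ there while $g$ stays bounded), so the minimum principle gives $\abs{g} \leq \epsilon H_D\paren{\cdot, w}$ on $D$; letting $\epsilon \to 0$ yields $g \equiv 0$ and hence $h = c\, H_D\paren{\cdot, w}$. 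The hard part is the Laurent-series classification in the local step: the Schwarz-reflected $\tilde h$ is not positive on the whole punctured disk (it is negative below the real axis), so the classical theorem classifying positive harmonic functions with an isolated singularity does not apply, and one must instead rule out the higher-order coefficients $\beta_n$ ($n \geq 2$) by exploiting positivity on the upper half-disk alone together with the sign-changing behavior of $\sin(n \theta)$.
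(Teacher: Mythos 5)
The paper states this proposition as background and supplies no proof, so there is nothing internal to compare against; judged on its own, your architecture (local singularity analysis at $w$ via reflection, then a global comparison) is the standard and correct one, but two of your key justifications do not hold as written. First, in the local step you eliminate $\beta_n$ for $n\geq 2$ by arguing that ``the highest-order negative power would dominate.'' A harmonic function on the punctured disk can have infinitely many nonzero negative-power coefficients (e.g.\ $\imag\paren{e^{1/z}}$ is odd under conjugation with $\beta_n=1/n!$ for all $n$), so there need not be a highest-order term, and the argument as stated does not cover the general case. The gap is fixable: since $\abs{\sin n\theta}\leq n\sin\theta$ on $\bracket{0,\pi}$, positivity of $\tilde h$ on the upper half-disk gives
\[
\abs{\alpha_n r^n+\beta_n r^{-n}}=\abs{\tfrac{2}{\pi}\int_0^{\pi}\tilde h\paren{re^{i\theta}}\sin\paren{n\theta}\,d\theta}\leq n\paren{\alpha_1 r+\beta_1 r^{-1}},
\]
and letting $r\rightarrow 0$ kills each $\beta_n$, $n\geq 2$, individually, with no appeal to a dominant term.

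Second, and more seriously, the global step rests on the claim that $\epsilon H_D\paren{\cdot,w}\pm g$ tends to $+\infty$ at $w$ because $H_D\paren{\cdot,w}\rightarrow\infty$ there. That is false: $H_{\half}\paren{x+iy,0}=\frac{1}{\pi}\frac{y}{x^2+y^2}$ tends to $\frac{a}{\pi}$ along the tangential path $y=ax^2$ (the paper itself exploits exactly this limit in the proof of Proposition \ref{PKLevelCurveJordan}), so $H_D\paren{\cdot,w}$ stays bounded along tangential approaches to $w$ and cannot serve as a barrier that overwhelms the bounded error $g$. Consequently you cannot verify $\liminf\geq 0$ at $w$ and the naive minimum principle does not apply. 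The conclusion is still reachable, but by a different mechanism: $g$ is harmonic, globally bounded (by the maximum principle applied off a neighborhood of $w$, taking $\infty$, i.e.\ $A_0\cup\set{\infty}$, as a boundary point where $g$ also vanishes --- note the statement is false for $\half$ at $w=0$ otherwise, since $\imag z$ is positive harmonic and vanishes on $\R\setminus\set{0}$), and has boundary limit $0$ at every point of $\partial D$ except the single point $w$, which has zero harmonic measure; hence $g\paren{z}=\ev{z}{g\paren{B_{\tau_D}}}=0$. With these two repairs the proof is complete.
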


\subsection{Excursion Measure}
Let $D \in \mathcal{Y}$.  If $\partial D$ is locally analytic at $w$, then the \emph{boundary Poisson kernel} is defined by
\[H_{\partial D}\paren{w,z}=\frac{d}{dn} ~H_D\paren{w,z},\]
where $n$ is the inward pointing normal at $w$.  If $w$ is a two-sided boundary point, we will adopt a convention similar to the one we adopted for $H_D\paren{w,z}$ when $z$ is two-sided. If $f$ is a conformal map and $\partial f\paren{D}$ is locally analytic at $f\paren{w}$ and $f\paren{z}$, then
\begin{equation}\label{BdPKCI}
H_{\partial D}\paren{w,z}=\abs{f'\paren{w}}\abs{f'\paren{z}}H_{\partial f\paren{D}}\paren{f\paren{w},f\paren{z}}.
\end{equation}

The definition of excursion reflected Brownian motion uses excursion measure.  Excursion measure is sometimes defined as a measure on paths between two boundary points of $D$.  Since we will only be interested in the norm of that measure, the definition we give of excursion measure is the norm of excursion measure as defined elsewhere (\cite{MR2129588}, \cite{MR2247843}).  

\begin{definition}
Suppose $D \subset \C$ is a domain with locally analytic boundary and $V$ and $V'$ are disjoint arcs in $\partial D$.  Then
\begin{equation*}
\mathcal{E}_D\paren{V,V'} :=\int_{V}\int_{V'} H_{\partial D}\paren{z,w} \abs{dz}\abs{dw}
\end{equation*}
is called \emph{excursion measure}.  
\end{definition}

Using \eqref{BdPKCI}, we can check that $\mathcal{E}_D$ is conformally invariant.  This allows us to define $\mathcal{E}_D\paren{V,V'}$ even if $D$ does not have locally analytic boundary.  We will often write $\mathcal{E}_D\paren{A,V}$ for $\mathcal{E}_D\paren{\partial A,V}$ and $H_{\partial D}\paren{A,z}$ as shorthand for the quantity $\int_{\partial A} H_{\partial D}\paren{z,w} \abs{dz}$.  Using \eqref{BdPKCI}, we see that if $f: D \rightarrow D'$ is a conformal map, then 
\[H_{\partial D}\paren{A,z}=H_{\partial f\paren{D}}\paren{f\paren{A},f\paren{z}}\abs{f'\paren{z}}.\]
As a result, it is possible to define $H_{\partial D}\paren{A,z}$ even if $A$ does not have locally analytic boundary.

\subsection{Green's Function for Brownian Motion}

In what follows, let $D \in \mathcal{Y}$ be such that $\partial D$ has at least one regular point for Brownian motion.  In this setting, it is possible to define a (a.s. finite) Green's function for Brownian motion $G_D\paren{z,w}$ (see, for instance, \cite{MR2129588}).  By convention, we scale $G_D\paren{z,\cdot}$ so that it is the density for the occupation time of Brownian motion.  As a result, what we mean by $G_D$ may differ by a factor of $\pi$ from what appears elsewhere.

It is well-known that $G_D\paren{z,w}=G_D\paren{w,z}$ and that $G_D\paren{z,\cdot}$ can be characterized as the unique harmonic function on $D \backslash \set{z}$ such that $G_D\paren{z,w} \rightarrow 0$ as $w \rightarrow \partial D$ and 
\begin{equation} \label{GreenAsztow}
G_D\paren{z,w}=\frac{-\log \abs{z-w}}{\pi}+O\paren{1},
\end{equation}
as $z \rightarrow w$.  Another property of $G_D\paren{z,w}$ is that it is conformally invariant.  That is, if $f:D \rightarrow D'$ is a conformal map, then $G_{f\paren{D}}\paren{f\paren{z},f\paren{w}}=G_D\paren{z,w}$.  Finally, it is well-known that
\begin{equation} \label{GFUniDisk}
G_{r\D}\paren{0,z}=-\frac{\log r - \log \abs{z}}{\pi}.
\end{equation}

\section{Excursion Reflected Brownian Motion} \label{sectERBM} 

\subsection{Definition}
\label{ChapERBMDef}
We start this section by giving a precise definition of excursion reflected Brownian motion in $D \in \mathcal{Y}$.  We will see that for any $D \in \mathcal{Y}$ there is a unique process satisfying the conditions of our definition.

The Jordan curve theorem says that any Jordan curve $\eta$ separates $\C$ into exactly two connected components.  We will call the bounded connected component the \emph{interior} of $\eta$ and the unbounded connected component the \emph{exterior} of $\eta$. If $A \subset \C$ is in the interior of $\eta$, we will say $\eta$ \emph{surrounds} $A$.

\begin{definition}
\label{characterizationERBM}
Let $E=D\cup \set{A_1, \ldots, A_n}$ be equipped with the quotient topology and let $E_{\partial}=E \cup \set{A_0}$ be the one-point compactification of $E$. A strong Markov process $B^{ER}_{D}$ with state space $E_{\partial}$ is called an \emph{excursion reflected Brownian motion} (ERBM) if it satisfies the following properties.

\begin{enumerate}
\item $B_D^{ER}$ has continuous sample paths.

\item If we start the process at $z \in D$ and let
\[T=\inf \set{t:  B_{D}^{ER}\paren{t} \in \partial D},\]
then for $0\leq t \leq T$, $B_{D}^{ER}\paren{t}$ is a Brownian motion in $D$ killed at $\partial D$.

\item Let $\eta_1, \ldots, \eta_n$ be pairwise disjoint smooth Jordan curves in $D$ such that $\eta_i$ surrounds $A_i$ and does not surround $A_j$ for $j\neq i$.  If
\[\sigma=\inf \set{t:B_{D}^{ER}\paren{t} \in \eta_i},\]
then $B_{D}^{ER}\paren{\sigma}$ has the distribution of $\frac{\mathcal{E}_{U_i}\paren{A_i,\cdot}}{\mathcal{E}_{U_i}\paren{A_i,\eta_i}}$, where $U_i$ is the region bounded by $\partial A_i$ and $\eta_i$.

\item  $B_{D}^{ER}$ is conformally invariant (this will be made more precise in Proposition \ref{ERBMconformalinvariance}) and the radial part of $B_{\C \backslash \D}^{ER}$ has the same distribution as the radial part of a reflected Brownian motion in $\C \backslash \D$.
\end{enumerate}
\end{definition}

We think of $A_0$ as being a ``coffin'' state; once the process is in $A_0$, it can never leave. We will often refer to ERBM in $D$ or $E$ when we really mean the process with the enlarged state space $E_{\partial}$.

\subsection{Excursion Reflected Brownian Motion in $\C \backslash \D$}
\label{ChapERBMCD}
The first step in constructing ERBM is to construct it in $E=\C \backslash \D \cup \set{\overline{\D}}$.  We will mimic the construction of Walsh's Brownian motion given in \cite{MR1022917}.  The idea of the construction is that if a process exists that satisfies Definition \ref{characterizationERBM}, we can determine what its semigroup must be.  Once we know what its semigroup must be, we use general theory to show that there actually is a process with that semigroup.  Finally, once we have the process, we check that it actually satisfies Definition \ref{characterizationERBM}.  For the remainder of this section, we will use polar coordinates to specify points in $E$.

We will build the semigroup for ERBM using the semigroup for reflected Brownian motion in $\C\backslash \D$ and Brownian motion in $\C\backslash \D$.  There is much in the literature on reflected Brownian motion and it is possible to define it in very general domains.  However, in $\C \backslash \D$ it is possible to give a simple construction.   Let $B_1$ and $B_2$ be independent one-dimensional Brownian motions and define reflected Brownian motion in $\half$ to be the process $B_1+i\abs{B_2}$.  We can then define reflected Brownian motion in $\C \backslash \D$ to be the image of reflected Brownian motion in $\half$ under the map $z \mapsto e^{-i z}$ with the appropriate time change.  It is not hard to verify that this definition agrees with other definitions in the literature and that the resulting process is a Feller-Dynkin process (see \cite{MR1796539} for the definition and basic properties of Feller-Dynkin processes).
 
\begin{proposition}\label{semigroupERBMThm}
Let $T_t^+$ be the semigroup for reflected Brownian motion in $\C\backslash \D$ and $T_t^0$ be the semigroup for Brownian motion in $\C\backslash \D$.  For $f \in C_0\paren{E}$, let
\begin{equation}
P_t f\paren{r,\theta}=T^+_t \overline{f}\paren{r,\theta}+T_t^0 \paren{f-\overline{f}}\paren{r,\theta},
\label{semigroupERBM}
\end{equation}
where $\displaystyle \overline{f}\paren{r,\theta}=\frac{1}{2\pi} \int_0^{2\pi} f\paren{r,\theta}d\theta.$  If there is a stochastic process $B_{\C \backslash \D}^{ER}$ taking values in $E$ that satisfies Definition \ref{characterizationERBM}, then its semigroup is $P_t.$
\end{proposition}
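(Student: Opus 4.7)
The plan is to decompose an arbitrary $f \in C_0(E)$ as $f = \overline{f} + g$ where $g = f - \overline{f}$ has vanishing angular mean at each fixed radius, and to evaluate $\ev{(r,\theta)}{f(B^{ER}_t)}$ on each piece separately. The function $\overline{f}$ depends only on the radial coordinate, so $\overline{f}(B^{ER}_t)$ is a function of $\abs{B^{ER}_t}$ alone; property (4) of Definition~\ref{characterizationERBM} then says that the radial part of $B_{\C \backslash \D}^{ER}$ has the same law as the radial part of a reflected Brownian motion in $\C \backslash \D$, and therefore
\[\ev{(r,\theta)}{\overline{f}(B^{ER}_t)} = T_t^+ \overline{f}(r,\theta).\]

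For the zero-mean remainder $g$, set $\tau = \inf \set{t : B^{ER}_t \in A_1}$ and split the expectation according to $\set{t < \tau}$ versus $\set{t \geq \tau}$. On $\set{t < \tau}$, property (2) says that $B^{ER}$ agrees with a Brownian motion killed on $\partial \D$, so this piece contributes exactly $T_t^0 g(r,\theta)$. On $\set{t \geq \tau}$ the strong Markov property at $\tau$ rewrites the remaining contribution in terms of $\ev{A_1}{g(B^{ER}_u)}$ for $u \in [0,t]$, so the problem reduces to showing that $\ev{A_1}{g(B^{ER}_u)} = 0$ for every $u \geq 0$.

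This last vanishing is the crux and comes from rotational symmetry. For every $\phi \in [0,2\pi)$ the map $z \mapsto e^{i\phi} z$ is a conformal automorphism of $\C \backslash \D$ that fixes the quotient point $A_1$. Applying the conformal invariance of property (4) to this automorphism shows that the law of $B^{ER}_u$ started at $A_1$ is invariant under every rotation of $\C \backslash \D$, so in particular its angular marginal is uniform on $[0,2\pi)$; since $\overline{g} \equiv 0$ this forces $\ev{A_1}{g(B^{ER}_u)} = 0$. Combining the three contributions gives $P_t f = T_t^+ \overline{f} + T_t^0 (f - \overline{f})$, as claimed.

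The main obstacle will be justifying the rotational-symmetry step. Property (4) of Definition~\ref{characterizationERBM} phrases conformal invariance in terms of equivalences between potentially distinct domains, so one has to check that the statement still applies to a self-map of $\C \backslash \D$ and, more delicately, to trajectories launched from the identified boundary point $A_1$. Once this is in hand, the decomposition $f = \overline{f} + g$ is precisely the device that converts the rotational symmetry of the process started at $A_1$ into the explicit semigroup formula for $P_t$.
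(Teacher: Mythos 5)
Your proposal is correct and takes essentially the same route as the paper: both decompose $f$ into its angular mean $\overline{f}$ and the zero-mean remainder, handle $\overline{f}$ via the radial reflected-Brownian-motion part of property (4), and show the remainder contributes only $T_t^0\paren{f-\overline{f}}$ because the contribution after the process first reaches $\overline{\D}$ vanishes by uniformity of the angular distribution. The one point to watch is the step you yourself flag: deducing rotational invariance of the law started at $\overline{\D}$ from property (4) implicitly uses that the rotated process, being \emph{an} ERBM, must equal the original in law---which leans on the uniqueness this very proposition establishes---but the paper's own proof simply asserts the angular uniformity as a consequence of the Definition, so your treatment is no less complete.
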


\begin{proof}
Assume we have a process $X_t$ taking values in $E$ that satisfies Definition \ref{characterizationERBM} and a filtration $\paren{\Omega,\mathcal{F}_t}$ to which $X_t$ is adapted to. Let $\tau$ be the first time $X_t$ hits $\overline{\D}$.  Modifying the filtration if necessary, the D\'{e}but theorem says that the first time $\tau$ that $X_t$ hits $\overline{\D}$ is a stopping time.  Finally, let
\[A_t=\set{\omega \in \Omega: \tau\leq t}.\]
Definition \ref{characterizationERBM} implies that $X_t$ has the distribution of a Brownian motion for $t \leq \tau$ and that on $A_t$ the angular part of $X_t$ is uniformly distributed and the radial part is that of a reflected Brownian motion.  Combining these facts, we have that if $f\in C_0\paren{E}$, then
\begin{align*}\displaystyle
P_t f\paren{x}&= \ev{x}{f\paren{X_t}}\\
&= \ev{x}{\mathbf{1}_{A_t}f\paren{X_t}}+\ev{x}{\mathbf{1}_{A_t^c}f\paren{X_t}} \\
&= T_t^+ \overline{f}\paren{x}-\ev{x}{\mathbf{1}_{\Omega \backslash A_t}\overline{f}\paren{X_t}}+T_t^0 f\paren{x}-\ev{x}{\mathbf{1}_{A_t}\overline{f}\paren{\overline{\D}}}\\
&= T^+_t \overline{f}\paren{r,\theta}+T_t^0 \paren{f-\overline{f}}\paren{r,\theta}.
\end{align*}
\end{proof}

\begin{proposition}
$P_t$ is a Feller-Dynkin semigroup on $C_0\paren{E}$.  
\end{proposition}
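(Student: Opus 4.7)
The plan is to verify the four Feller--Dynkin axioms directly: $P_t$ maps $C_0(E)$ into itself, is a positive contraction, satisfies $P_{s+t}=P_sP_t$, and is strongly continuous at $0$. The key technical observation underlying everything is that both reflected Brownian motion in $\C\backslash\D$ and Brownian motion killed on $\partial\D$ are rotationally invariant. In particular, $T_t^+$ and $T_t^0$ each send radial functions to radial functions, and moreover $T_t^0$ commutes with angular averaging in the sense that $\overline{T_t^0 g}=T_t^0\overline{g}$ for every $g$. Since $\overline{f}$ depends only on $r$, we obtain the two key identities $\overline{T_t^+\overline{f}}=T_t^+\overline{f}$ and $\overline{T_t^0(f-\overline{f})}=0$.

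The semigroup property follows almost formally from these. From the identities above, $\overline{P_tf}=T_t^+\overline{f}$ and $P_tf-\overline{P_tf}=T_t^0(f-\overline{f})$. Applying the definition of $P_s$ to $P_tf$ and using the semigroup properties of $T^+$ and $T^0$ separately then yields
\[
P_s(P_tf)=T_s^+\overline{P_tf}+T_s^0(P_tf-\overline{P_tf})=T_s^+T_t^+\overline{f}+T_s^0T_t^0(f-\overline{f})=P_{s+t}f,
\]
and $P_0=I$ is immediate.

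For the remaining axioms I would use the probabilistic rewriting $P_tf=T_t^0f+(T_t^+-T_t^0)\overline{f}$; by the strong Markov property of reflected Brownian motion at the hitting time $\tau$ of $\partial\D$, $(T_t^+-T_t^0)\overline{f}(z)=\ev{z}{\overline{f}(X_t^+)\mathbf{1}_{t\geq\tau}}$, exhibiting $P_tf(z)$ as an expectation of a bounded quantity. This gives positivity and $\|P_tf\|_\infty\leq\|f\|_\infty$, since $\|\overline{f}\|_\infty\leq\|f\|_\infty$. To verify that $P_tf\in C_0(E)$, observe that any $f\in C_0(E)$ pulls back to a continuous function on the closed exterior $\C\backslash\D$ that is constant on $\partial\D$ (with common value $f(\overline{\D})$) and vanishes at infinity; hence $\overline{f}$ lies in the natural Feller--Dynkin space for reflected Brownian motion, making $T_t^+\overline{f}$ radial and continuous on $\C\backslash\D$ so that it descends to $E$. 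Meanwhile $f-\overline{f}$ vanishes on $\partial\D$, so it lies in the Feller--Dynkin space $C_0(\C\backslash\overline{\D})$ for killed Brownian motion, and $T_t^0(f-\overline{f})$ extends continuously to $E$ by $0$ at $\overline{\D}$. Strong continuity of $P_t$ at $0$ then follows by writing $P_tf-f=(T_t^+\overline{f}-\overline{f})+(T_t^0(f-\overline{f})-(f-\overline{f}))$ and invoking strong continuity of $T^+$ and $T^0$ on their respective $C_0$ spaces.

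The main obstacle is the careful bookkeeping at the identified point $\overline{\D}\in E$: the two semigroups $T^+$ and $T^0$ live on different state spaces (one with $\partial\D$ included as a reflecting boundary, the other with $\partial\D$ removed as the killing boundary), and one must check that the decomposition $f=\overline{f}+(f-\overline{f})$ routes each summand to the correct space so that $P_tf$ reassembles into a well-defined element of $C_0(E)$. This hinges entirely on the facts that $\overline{f}$ is rotationally symmetric (hence constant on $\partial\D$) while $f-\overline{f}$ vanishes on $\partial\D$, which is precisely why angular averaging is the right decomposition to use.
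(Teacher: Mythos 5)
Your proof is correct and is precisely the ``straightforward exercise'' the paper defers to \cite{DreThesis} for: you verify the Feller-Dynkin axioms directly from the corresponding properties of $T_t^+$ and $T_t^0$, with rotational invariance and the angular-averaging identities $\overline{T_t^+\overline{f}}=T_t^+\overline{f}$, $\overline{T_t^0(f-\overline{f})}=0$ doing the work. The two points you isolate --- the semigroup law via $\overline{P_tf}=T_t^+\overline{f}$, and the routing of $\overline{f}$ and $f-\overline{f}$ to the correct $C_0$ spaces at the identified point $\overline{\D}$ --- are exactly the content of the omitted argument.
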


\begin{proof}
Using the fact that $T_t^+$ and $T_t^0$ are Feller-Dynkin semigroups, this is a straightforward exercise. See \cite{DreThesis} for a complete proof.

\end{proof}

Using (say) Theorem III.7.1 of \cite{MR1796539}, given any measure $\mu$ on $E$, we can define a unique Feller-Dynkin process
 \begin{equation} \label{ERBMdef}
B^{ER}_{\C\backslash \D}:=\paren{\Omega,\mathscr{F},\{\mathscr{F}_t:t\geq 0\},\{B^{ER}_{\C\backslash \D}(t):t\geq 0\},\mathbf{P}^{\mu}}
\end{equation}
with semigroup $P_t$.  Furthermore, the filtration $\mathscr{F}_t$ is independent of the measure $\mu$, $B^{ER}_{\C\backslash \D}$ has the strong Markov property with respect to $\mathscr{F}_t$, and the sample paths of $B^{ER}_{\C \backslash \D}$ are c\`{a}dl\`{a}g.  We denote the angular and radial parts of $B^{ER}_{\C\backslash \D}$ at time $t$ by $\theta_t$ and $R_t$ respectively.

Next we check that the process $B^{ER}_{\C\backslash \D}$ defined in \eqref{ERBMdef} satisfies Definition \ref{characterizationERBM}.
\begin{proposition}
$B^{ER}_{\C \backslash \D}$ has the distribution of a Brownian motion up until the first time it hits $\partial D$.
\label{ERBMlikeBM}
\end{proposition}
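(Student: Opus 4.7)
The plan is to establish equality in distribution between $B_{\C\backslash\D}^{ER}$ killed at $\tau:=\inf\{t:B_{\C\backslash\D}^{ER}(t)\in\overline{\D}\}$ and an ordinary Brownian motion killed at its first hit $\tilde{\tau}$ of $\overline{\D}$, by matching finite-dimensional distributions. The argument has two ingredients: (a) identifying the radial part of $B_{\C\backslash\D}^{ER}$ with that of a reflected Brownian motion, so that $\tau\stackrel{d}{=}\tilde{\tau}$ under $\mathbf{P}^z$ for $|z|>1$; and (b) matching the one-time marginals of the killed processes by computing $P_tf(z)$ in two different ways. The extension from one-time to multi-time distributions is then a routine induction using the strong Markov property of $B_{\C\backslash\D}^{ER}$, and continuity of sample paths on $[0,\tau)$ comes along as a free byproduct.

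For (a): whenever $f\in C_0(E)$ is rotation-invariant, one has $\overline{f}=f$, so by \eqref{semigroupERBM} the semigroup of $B_{\C\backslash\D}^{ER}$ coincides with the reflected Brownian motion semigroup $T_t^+$ on such $f$. Applying this at a sequence of times $0<t_1<\dots<t_n$ via the Markov property of $B_{\C\backslash\D}^{ER}$ shows that $|B_{\C\backslash\D}^{ER}(\cdot)|$ has the same finite-dimensional distributions as the radial part of a reflected Brownian motion in $\C\backslash\D$. In particular $\tau\stackrel{d}{=}\tilde{\tau}$.

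For (b), fix $f\in C_0(E)$ with $f(\overline{\D})=0$ and $|z|>1$. Since $(f-\overline{f})(\overline{\D})=0$ and $T_s^0$ leaves the coffin state $\overline{\D}$ fixed, $P_sf(\overline{\D})=T_s^+\overline{f}(\overline{\D})$. Moreover, because $\overline{f}$ is radial, so is $T_s^+\overline{f}$, and hence $\phi(s):=T_s^+\overline{f}(\overline{\D})$ is unambiguously defined as the common value of $T_s^+\overline{f}$ on the unit circle. The strong Markov property of $B_{\C\backslash\D}^{ER}$ at $\tau$ then gives
\[
P_tf(z)=\mathbf{E}^z\!\left[f(B_{\C\backslash\D}^{ER}(t));\,t<\tau\right]+\mathbf{E}^z\!\left[\phi(t-\tau);\,t\geq\tau\right],
\]
while \eqref{semigroupERBM} combined with the strong Markov property of reflected Brownian motion at $\tilde{\tau}$ (using that reflected and ordinary Brownian motion agree before $\tilde{\tau}$) yields
\[
P_tf(z)=\mathbf{E}^z\!\left[f(B_t);\,t<\tilde{\tau}\right]+\mathbf{E}^z\!\left[\phi(t-\tilde{\tau});\,t\geq\tilde{\tau}\right].
\]
By (a) and the fact that $\phi$ is deterministic, the remainder terms on the right agree, forcing $\mathbf{E}^z[f(B_{\C\backslash\D}^{ER}(t));\,t<\tau]=\mathbf{E}^z[f(B_t);\,t<\tilde{\tau}]$ for every admissible $f$.

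The main obstacle will be the bookkeeping in (a)---being careful about the meaning of $T_s^+$ acting at the quotient point $\overline{\D}$ (which is legitimate for radial functions because $T_s^+\overline{f}$ is constant on $\partial\D$) and about the fact that $T_s^0$ treats $\overline{\D}$ as a coffin. Once those identifications are correctly set up, the decompositions in (b) are purely algebraic, and the propagation to finite-dimensional distributions is standard: on the event $\{t_{k-1}<\tau\}$ the process is in $\C\backslash\overline{\D}$, and its conditional law given $\mathscr{F}_{t_{k-1}}$ is $P_t$ started at $B_{\C\backslash\D}^{ER}(t_{k-1})$, so the one-time matching can be iterated.
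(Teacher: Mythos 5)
Your argument is correct, and it is in essence a careful expansion of the paper's own proof, which simply asserts that the claim ``follows immediately from \eqref{semigroupERBM}'': both rest entirely on the explicit semigroup formula, and your two-way decomposition of $P_tf(z)$ at the hitting time is a legitimate way to make the word ``immediately'' precise. Note only that your step (a) is exactly Proposition \ref{radialpartERBM}, which the paper states \emph{after} this proposition but proves independently of it, so importing it here creates no circularity.
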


\begin{proof}
This follows immediately from \eqref{semigroupERBM}.
\end{proof}

\begin{proposition} \label{radialpartERBM}
$R_t$ has the same distribution as the radial part of a reflected Brownian motion in $\C \backslash \D$.
\end{proposition}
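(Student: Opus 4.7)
The plan is to exploit the form of the semigroup \eqref{semigroupERBM}. Observe that if $f \in C_0(E)$ depends only on $r$ (i.e.\ is rotationally invariant), then $\overline{f} = f$ and hence $f - \overline{f} \equiv 0$. Substituting into \eqref{semigroupERBM} gives
\begin{equation*}
P_t f(r,\theta) = T_t^+ f(r,\theta),
\end{equation*}
so on radial functions the ERBM semigroup coincides with the reflected Brownian motion semigroup. Moreover, since $T_t^+$ acting on a radial function produces another radial function (reflected Brownian motion in $\C \backslash \D$ is rotationally symmetric), $P_t f$ is again radial, and we may write $P_t f(r,\theta) = Q_t f(r)$ for a semigroup $Q_t$ on $C_0([1,\infty))$ that agrees with the transition semigroup of the radial part of reflected Brownian motion.

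Next I would use this to show $R_t$ is itself a Markov process with transition semigroup $Q_t$. Given a bounded measurable $g$ on $[1,\infty)$, set $f(r,\theta) = g(r)$. By the Markov property of $B^{ER}_{\C \backslash \D}$ (which holds with respect to $\mathscr{F}_t$),
\begin{equation*}
\ev{\mu}{g(R_{s+t}) \mid \mathscr{F}_s} = P_t f(R_s, \theta_s) = Q_t g(R_s),
\end{equation*}
and since the right-hand side depends only on $R_s$, the process $R_t$ is Markov with respect to its own filtration and its semigroup is $Q_t$.

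Finally, since the radial part of reflected Brownian motion in $\C \backslash \D$ is also a Markov process with semigroup $Q_t$ and the two processes have the same (deterministic or otherwise matching) initial radius, their finite-dimensional distributions coincide. Continuity of the sample paths of $R_t$ follows because the jumps of $B^{ER}_{\C \backslash \D}$ occur while the process is on $\partial \D$ and therefore do not change the radius; hence equality of finite-dimensional distributions upgrades to equality in law as continuous processes.

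I do not anticipate a major obstacle: the whole proof is essentially a one-line semigroup calculation together with the observation that the ``randomization by rotation'' built into \eqref{semigroupERBM} affects only the angular coordinate. The only mildly delicate point is verifying that $P_t$ preserves the class of radial functions and that $R_t$ inherits sample-path continuity from the radial nature of the jumps, but both are immediate from the construction.
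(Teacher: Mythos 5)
Your proposal is correct and follows essentially the same route as the paper: both arguments reduce to the observation that $\overline{f}=f$ for radial $f$, so that $P_t$ restricted to radial functions coincides with the semigroup of the radial part of reflected Brownian motion, and then invoke the (strong) Markov property to identify the finite-dimensional distributions of $R_t$. The paper phrases the computation with an arbitrary stopping time $S$ in place of your fixed time $s$, but the content is the same.
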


\begin{proof}
We mimic the proof of Lemma 2.2 in \cite{MR1022917}.  Let $g \in C_0\paren{\left[1,\infty \right)}$ and define $f \in C_0\paren{E}$ by $f\paren{r,\theta}=g\paren{r}.$  Observe that $\overline{f}=f$.  If $S$ is any $\mathscr{F}_t$-stopping time, then
\begin{align*}
\ev{\mu}{g\paren{R_{S+t}}|\mathscr{F}_S}&=\ev{\mu}{f\paren{R_{S+t},\theta_{S+t}}|\mathscr{F}_S}\\
&= P_t f\paren{R_S,\theta_S}\\
&= T_t^+ \overline{f}\paren{R_S,\theta_S}+T_t^0\paren{f-\overline{f}}\paren{R_S,\theta_S}\\
&= T_t^+ f \paren{R_S,\theta_S}\\
&= R_t^+ g\paren{R_S},
\end{align*}
where $R_t^+$ is the semi-group for the radial part of reflected Brownian motion in $\C \backslash \D$. The result follows.
\end{proof}

\begin{proposition} \label{excursiondistERBM}
Let $\eta$ be a smooth Jordan curve surrounding $\overline{\D}$, $U$ be the region bounded by $\eta$ and $\partial \D$, and $\tau$ be the first time $B^{ER}_{\C \backslash \D}$ hits $\eta$. If $V$ is an arc in $\eta$, then
\begin{equation*}
\alpha:= \prob{\overline{\D}}{B^{ER}_{\C \backslash \D}(\tau) \in V}=\frac{\mathcal{E}_U\paren{\D,V}}{\mathcal{E}_U\paren{\D,\eta}}.
\end{equation*}
\end{proposition}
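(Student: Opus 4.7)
The plan is to reduce to a Brownian-motion computation by applying the strong Markov property at the first hit of an intermediate circle $\partial B_r(0)$ with $1 < r < \min_{z\in\eta}\abs{z}$, and then to take $r \to 1^+$ to recover the boundary Poisson kernel that appears in the definition of $\mathcal{E}_U$.

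First I would observe that the semigroup $P_t$ is rotation-equivariant: from \eqref{semigroupERBM}, $\overline{f \circ R_\varphi} = \overline{f}$ for every rotation $R_\varphi(r,\theta) := (r,\theta+\varphi)$, and since both $T_t^+$ and $T_t^0$ commute with rotations, $P_t(f\circ R_\varphi) = (P_t f)\circ R_\varphi$. Because $\overline{\D}$ is fixed by every $R_\varphi$, the law of $B^{ER}_{\C\backslash\D}$ started at $\overline{\D}$ is rotationally invariant. In particular, if $\rho$ denotes the first hit time of $\partial B_r(0)$, then $B^{ER}_{\C\backslash\D}(\rho)$ is uniformly distributed on $\partial B_r(0)$.

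Next, for $z \in \partial B_r(0) \subset D$, Proposition \ref{ERBMlikeBM} tells me that the ERBM agrees with a Brownian motion $B$ until $B$ first hits $\partial\D$. Let $\sigma$ be the first time $B$ exits $U$, and for $A\subset\partial U$ set $h_A(z) := \prob{z}{B(\sigma)\in A}$ (the ordinary harmonic measure of $A$ in $U$ from $z$). Applying the strong Markov property at $\sigma$ and using that on $\{B(\sigma)\in\partial\D\}$ the ERBM restarts from $\overline{\D}$ and subsequently hits $V$ with conditional probability $\alpha$, I get
\[
\prob{z}{B^{ER}_{\C\backslash\D}(\tau)\in V} \;=\; h_V(z) + \paren{1-h_\eta(z)}\,\alpha.
\]
Applying strong Markov at $\rho$, averaging against the uniform law of $B^{ER}_{\C\backslash\D}(\rho)$ on $\partial B_r(0)$, and solving for $\alpha$ yields
\[
\alpha \;=\; \frac{\int_{\partial B_r(0)} h_V(z)\,\abs{dz}}{\int_{\partial B_r(0)} h_\eta(z)\,\abs{dz}}.
\]

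Finally I would let $r\to 1^+$. Because $\partial\D$ is analytic and $h_A$ is a bounded harmonic function on $U$ vanishing on $\partial\D$, Schwarz reflection provides a harmonic extension across $\partial\D$ and hence a uniform-in-$\theta$ expansion
\[
h_A\paren{re^{i\theta}} = (r-1)\,H_{\partial U}\paren{e^{i\theta},A} + O\paren{(r-1)^2},
\]
where the linear term is the inward normal derivative by definition of $H_{\partial U}$. Integrating in $\theta$, the numerator and denominator of the displayed ratio are each of the form $(r-1)\,\mathcal{E}_U(\D,A) + O((r-1)^2)$ (up to a common factor of $r$), so the ratio tends to $\mathcal{E}_U(\D,V)/\mathcal{E}_U(\D,\eta)$; since $\alpha$ does not depend on $r$, this is the value of $\alpha$. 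The main subtlety I expect is the uniformity in $\theta$ of the boundary expansion, but this is standard given the analyticity of $\partial\D$ and the fact that $\eta$ is uniformly separated from $\partial\D$.
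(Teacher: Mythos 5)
Your proof is correct and follows essentially the same route as the paper: rotational invariance of the semigroup gives the uniform hitting distribution on an intermediate circle, the strong Markov property at the exit of $U$ gives the renewal identity $\alpha = \overline{h_V} + \paren{1-\overline{h_\eta}}\alpha$ whose solution is the ratio of averaged harmonic measures, and the limit $r \to 1^+$ recovers $H_{\partial U}$ via the harmonic extension across $\partial \D$. The paper's argument is the same computation with $C_{1+\epsilon}$ in place of $\partial B_r(0)$, using the mean value theorem and dominated convergence where you invoke Schwarz reflection.
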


\begin{proof}
Let $C_{\epsilon}$ be the circle of radius $1+\epsilon$ centered at the origin.  Since it is clear from \eqref{semigroupERBM} that $B_{\C\backslash \D}^{ER}$ is rotationally invariant, the result follows in the case that $\eta=C_{\epsilon}$.

Let $p\paren{z}$ be the probability that a Brownian motion started at $z$ exits $U$ on $\eta$.  For small enough $\epsilon$, $C_{\epsilon}$ is in the interior of $\eta$.  For such an $\epsilon$, using the strong Markov property for ERBM and Proposition \ref{ERBMlikeBM}, we see that
\begin{align*}
2\pi \paren{1+\epsilon} \alpha &= \int_{C_{\epsilon}}\bracket{\int_V H_U\paren{z,w} \abs{dw}+\paren{ 1-p\paren{z}}\alpha}\abs{dz} \\
&=2\pi \paren{1+\epsilon} \alpha+\int_{C_{\epsilon}} \bracket{\int_V H_U\paren{z,w} \abs{dw}-p\paren{z}\alpha}\abs{dz}\\
&=2\pi \paren{1+\epsilon} \alpha+\int_{C_{\epsilon}} \bracket{\int_V H_U\paren{z,w} \abs{dw}-\alpha\int_{\eta} H_U\paren{z,w}\abs{dw}}\abs{dz}.
\end{align*}
As a result, for small enough $\epsilon$, we have
\[\displaystyle\alpha=\frac{\int_{C_{\epsilon}}\int_V H_U\paren{z,w} \abs{dw}\abs{dz}}{\int_{C_{\epsilon}}\int_{\eta} H_U\paren{z,w}\abs{dw}\abs{dz}}.\]
Since the derivative of $H_U\paren{\cdot,w}$ is bounded in a neighborhood of $\D$ (we can extend $H_U\paren{\cdot,w}$ to a function harmonic in a neighborhood of $\partial \D$), using the mean value theorem and dominated convergence, we see that
\begin{align*}
\alpha &=\lim_{\epsilon \rightarrow 0} \frac{\int_{C_{\epsilon}}\int_V \frac{H_U\paren{z,w}}{\epsilon} \abs{dw}\abs{dz}}{\int_{C_{\epsilon}}\int_{\eta} \frac{H_U\paren{z,w} }{\epsilon}\abs{dw}\abs{dz}}\\
&= \frac{\int_{\partial \D}\int_V H_{\partial U}\paren{z,w} \abs{dw}\abs{dz}}{\int_{\partial \D}\int_{\eta} H_{\partial U}\paren{z,w} \abs{dw}\abs{dz}}\\
&= \frac{\mathcal{E}_U\paren{\D,V}}{\mathcal{E}_U\paren{\D,\eta}}.
\end{align*}
\end{proof}

\begin{proposition}
There is a unique process stochastic process with state space $E=\C\backslash D\cup \set{\D}$ satisfying Definition \ref{characterizationERBM}.
\end{proposition}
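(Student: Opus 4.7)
The plan has two parts: existence and uniqueness.

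For existence, I would take $B^{ER}_{\C\backslash\D}$ as constructed in \eqref{ERBMdef} and verify the four conditions of Definition \ref{characterizationERBM}. Condition (2) is Proposition \ref{ERBMlikeBM}; condition (3) for any smooth Jordan curve surrounding $\overline{\D}$ is Proposition \ref{excursiondistERBM}; the radial clause of (4) is Proposition \ref{radialpartERBM}; and the conformal-invariance clause of (4) is vacuous here, since we are only considering the domain $\C\backslash\D$ itself (conformal invariance will be used to extend the construction to other domains in the following subsection).

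This leaves condition (1), continuity of sample paths. Since the Feller--Dynkin construction only guarantees c\`adl\`ag paths, this requires an upgrade. The radial part $R_t$ has continuous paths almost surely by Proposition \ref{radialpartERBM}, because reflected Brownian motion does. On each open interval in which $R_t>1$, Proposition \ref{ERBMlikeBM} together with the strong Markov property applied at the last exit from $\overline{\D}$ show that the process evolves as a Brownian motion in $\C\backslash\D$, so $\theta_t$ is continuous there. At times $t$ with $R_t=1$ the process takes the single value $\overline{\D}\in E$, and in the quotient topology of $E$ any sequence $z_n\in\C\backslash\D$ with $|z_n|\to 1$ converges to $\overline{\D}$; continuity of $R_t$ at such times therefore forces continuity of the $E$-valued path. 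Putting the pieces together, paths are continuous at every $t$ almost surely.

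For uniqueness, Proposition \ref{semigroupERBMThm} already forces any strong Markov process with state space $E_\partial$ satisfying Definition \ref{characterizationERBM} to have semigroup $P_t$. Together with the initial distribution, this semigroup determines all finite-dimensional distributions (again by Theorem III.7.1 of \cite{MR1796539}); combined with the continuity of paths from condition (1), the law on path space is uniquely determined.

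I expect the main obstacle to be the continuity argument. What would appear as discontinuous jumps of $\theta_t$ between successive excursions from $\partial\D$, if one tried to view the process in $\C\backslash\D$, is precisely what the collapse of $\partial\D$ to the single state $\overline{\D}$ in $E$ is designed to hide; making this rigorous just requires careful bookkeeping with the quotient topology on $E$ and invoking the Brownian character of each excursion supplied by Proposition \ref{ERBMlikeBM}.
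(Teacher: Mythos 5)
Your proposal is correct and follows essentially the same route as the paper: existence by combining Propositions \ref{ERBMlikeBM}, \ref{radialpartERBM}, and \ref{excursiondistERBM}, continuity by upgrading the c\`adl\`ag paths using the continuity of the radial part at $\overline{\D}$ (the quotient topology collapsing $\partial \D$ to a point) and the Brownian behavior in the interior, and uniqueness from the semigroup identification in Proposition \ref{semigroupERBMThm}. The only cosmetic difference is in the continuity step: you invoke the strong Markov property at a last exit time, which is not a stopping time, whereas the paper instead uses the standard fact that a right-continuous process admitting a continuous modification is itself continuous (applied to the Brownian part in $\C\backslash\D$ and to the radial part at $\overline{\D}$); your argument should be rephrased along those lines, e.g.\ by applying the Markov property at deterministic rational times $q$ with $R_q>1$.
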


\begin{proof}
The uniqueness statement follows from Proposition \ref{semigroupERBM}.  Propositions \ref{ERBMlikeBM}, \ref{radialpartERBM}, and \ref{excursiondistERBM} combine to show that the process defined in \eqref{ERBMdef} has the strong Markov property and satisfies (2), (3), and (4) of Definition \ref{characterizationERBM}.  By construction, $B_{\C \backslash \D}^{ER}$ has right continuous paths.  Since Brownian motion has a continuous modification and a right continuous process that has a continuous modification is already continuous, it follows that $B_{\C \backslash \D}^{ER}$ is continuous in $\C \backslash \D$.  Similarly, we can use the fact that the radial part of reflected Brownian motion has a continuous modification to show $B_{\C \backslash \D}^{ER}$ is continuous at $\D$.
\end{proof}

\begin{remark}
If $\mathcal{A}$ is the infinitesimal generator for $B_{\C\backslash \D}^{ER}$ and $f \in C^{\infty}\paren{\C\backslash \D}$ is in the domain of $\mathcal{A}$, then
\begin{equation}\displaystyle
\mathcal{A}f\paren{x}=
\begin{cases}
1/2 \Delta f\paren{x} & \mbox{if } x \in \C\backslash \D\\
\mathcal{A} f\paren{x}=\frac{1}{2\pi}\int_0^{2\pi} \frac{d}{dn}~f\paren{e^{i\theta}}~d\theta & \mbox{if } x=\D,
\end{cases}
\end{equation}
where $n$ is the outward pointing unit normal.  By appropriately modifying $\mathcal{A}$, we can obtain processes similar to ERBM with slightly different behavior at the boundary.  These processes will not have the connections to conformal mapping that ERBM enjoys though.

\end{remark}

\subsection{Excursion Reflected Brownian Motion in Conformal Annuli}
\label{chapERBMCA}
Let $A$ be any compact, connected subset of $\C$ larger than a single point and
\[f: \C \backslash \D \rightarrow \C \backslash A\]
be a conformal map sending $\infty$ to $\infty$.  It is a straightforward exercise to verify that $f$ is unique up to an initial rotation.  Let $\sigma_t$ be the $\mathscr{F}_t$ stopping time given by
\[\int_0^{\sigma_t} \abs{f'\paren{B_{\C\backslash \D}^{ER}\paren{s}}}^2 ds=t\]
and define
\[B_{\C\backslash A}^{ER}\paren{t}=f\paren{B_{\C \backslash D}^{ER}\paren{\sigma_t}}\]
and $\tilde{\mathscr{F}}_t=\mathscr{F}_{\sigma_t}.$
We define ERBM in $\C \backslash A$ to be the process
\begin{equation*}
B^{ER}_{\C \backslash A}:=\paren{\Omega,\mathscr{F},\set{\tilde{\mathscr{F}}_t: t\geq 0}, \set{B^{ER}_{\C\backslash A}},\set{\mathbf{P}^x}}.
\end{equation*}
Since $B_{\C \backslash D}^{ER}$ is rotationally invariant and $f$ is unique up to an initial rotation, it is clear that the distribution of $B_{\C\backslash A}^{ER}$ does not depend on $f$.  It is well-known that such a time change preserves the strong Markov property (see the discussion on pg. 277 of \cite{MR1796539}).  Using the fact that $B^{ER}_{\C \backslash A}$ behaves locally like a Browian motion, it is an easy exercise to check that $B^{ER}_{\C \backslash A}$ is a Feller-Dynkin process. 
To ensure that $B_{\C\backslash A}^{ER}\paren{t}$ exists for all $t<\infty$, we need to verify that
\begin{equation}
\int_0^{\infty} \abs{f'\paren{B_{\C\backslash \D}^{ER}\paren{s}}}^2 ds=\infty~~\rm{a.s.}.
\label{ERBMalltime}
\end{equation}
In order for $B_{\C\backslash A}^{ER}\paren{t}$ not to have a limit as $t \rightarrow \infty$, we need to verify that for all $t<\infty$,
\begin{equation}
\int_0^{t} \abs{f'\paren{B_{\C\backslash \D}^{ER}\paren{s}}}^2 ds<\infty~~\rm{a.s.}.
\label{ERBMnolimit}
\end{equation}
We temporarily put these considerations aside.

\begin{proposition}\label{CIERBMAnn}
Suppose $f: \C \backslash \D \rightarrow D_1$ and $g : D_1 \rightarrow D_2$ are conformal maps.  Then the process
\[B^{ER}_{D_2}\paren{t}=B^{ER}_{D_1}\paren{\sigma_t},\]
where
\[\int_0^{\sigma_t} \abs{g'\paren{B^{ER}_{D_1}\paren{s}}}^2 ds=t\]
is an ERBM in $D_2$. \label{ERBMconformalinvariance}
\end{proposition}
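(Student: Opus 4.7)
The plan is to show that the two-step time-change (apply $f$ then $g$) agrees with the one-step time-change (apply $g\circ f$ directly). Since ERBM in $D_2$ was defined in Section \ref{chapERBMCA} using the conformal map $g\circ f: \C\setminus\D \to D_2$, the entire proposition reduces to a chain-rule identification of the underlying time change.

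First, I would unpack the definitions. Let $\tau_t$ denote the time change such that $B^{ER}_{D_1}(t)=f(B^{ER}_{\C\setminus\D}(\tau_t))$, characterized by
\[\int_0^{\tau_t} \abs{f'(B^{ER}_{\C\setminus\D}(s))}^2\,ds = t.\]
Let $\sigma_t$ be the time change given in the statement, and let $\tilde{\sigma}_t$ be the one appearing in the definition of $B^{ER}_{D_2}$, namely
\[\int_0^{\tilde{\sigma}_t}\abs{(g\circ f)'(B^{ER}_{\C\setminus\D}(s))}^2\,ds = t.\]
Then $g(B^{ER}_{D_1}(\sigma_t)) = g(f(B^{ER}_{\C\setminus\D}(\tau_{\sigma_t})))$, so it suffices to verify that $\tau_{\sigma_t}=\tilde{\sigma}_t$ for all $t$ almost surely.

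The core step is a change of variables. Substituting $s=\tau_u$, so that $ds = du/\abs{f'(B^{ER}_{\C\setminus\D}(\tau_u))}^2$, and using $\abs{(g\circ f)'(z)}^2 = \abs{g'(f(z))}^2\abs{f'(z)}^2$, one computes
\[\int_0^{\tau_{\sigma_t}}\abs{(g\circ f)'(B^{ER}_{\C\setminus\D}(s))}^2\,ds = \int_0^{\sigma_t}\abs{g'(B^{ER}_{D_1}(u))}^2\,du = t,\]
where the last equality is the definition of $\sigma_t$. Since $\tilde{\sigma}_t$ is the unique solution to this equation, we conclude $\tau_{\sigma_t}=\tilde{\sigma}_t$, hence $g(B^{ER}_{D_1}(\sigma_t))=B^{ER}_{D_2}(t)$.

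I do not expect any real obstacle here; the only subtle points are (i) justifying that $\tau_t,\sigma_t,\tilde{\sigma}_t$ are all well-defined and strictly increasing for all finite $t$, which follows from the nonvanishing of $f'$ and $g'$ together with the not-yet-verified statements \eqref{ERBMalltime} and \eqref{ERBMnolimit} assumed to hold for the maps under consideration, and (ii) the validity of the substitution rule for the Lebesgue--Stieltjes integral along a continuous strictly increasing time change, which is standard. Once $\tau_{\sigma_t}=\tilde{\sigma}_t$ is established, there is nothing else to check: the strong Markov and Feller--Dynkin properties of $B^{ER}_{D_2}$ were already noted in the construction of ERBM in conformal annuli, and the two processes are now seen to be pathwise identical.
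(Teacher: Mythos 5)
Your proposal is correct and follows essentially the same route as the paper: both reduce the claim to showing that the two-step time change agrees with the one-step time change for $g\circ f$, via the chain rule $\abs{\paren{g\circ f}'}^2=\abs{g'\circ f}^2\abs{f'}^2$ and a change of variables in the time integral (the paper parametrizes from the $\C\backslash\D$ side and pushes forward, while you parametrize from the $D_1$ side and pull back, which is the same computation inverted). The caveats you flag, namely reliance on \eqref{ERBMnolimit} for the bijectivity of the time change, match the paper's own remarks.
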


\begin{proof}
Let $\sigma_r$ satisfy
\[\int_0^{\sigma_r} \abs{g'\paren{f\paren{B^{ER}_{\C \backslash \D}\paren{s}}}f'\paren{B^{ER}_{\C \backslash \D}\paren{s}}}^2 ds=r\]
and define a map $T:\bracket{0,\sigma_r} \rightarrow \left[0, \infty \right)$ by
\begin{equation}
t \mapsto \int_0^t \abs{f'\paren{B^{ER}_{\C \backslash \D}\paren{s}}}^2 ds.
\end{equation}
It is straightforward to verify that $T$ is a bijection (we use \eqref{ERBMnolimit} here) onto $\bracket{0,T\paren{\sigma_r}}$ with derivative $\abs{f'\paren{B^{ER}_{\C \backslash \D}\paren{s}}}^2$.  Using the change of variables formula, we have
\begin{align*}
r&=\int_0^{\sigma_r} \abs{g'\paren{f\paren{B^{ER}_{\C \backslash \D}\paren{s}}}f'\paren{B^{ER}_{\C \backslash \D}\paren{s}}}^2 ds\\
&=\int_0^{\sigma_r} \abs{g'\paren{B^{ER}_{D_1}\paren{T\paren{s}}}f'\paren{B^{ER}_{\C \backslash \D}\paren{s}}}^2 ds\\
&=\int_0^{T\paren{\sigma_r}} \abs{g'\paren{B^{ER}_{D_1}\paren{s}}}^2 ds.
\end{align*}
As a result, $B^{ER}_{D_2}\paren{r}=g\paren{B^{ER}_{D_1}\paren{T\paren{\sigma_r}}}=g\paren{f\paren{B^{ER}_{\C \backslash \D}\paren{\sigma_r}}}$ and thus, the process in $D_2$ defined by $g$ is the same as the process defined by $g\circ f$.  The result follows.
\end{proof}

Proposition \ref{CIERBMAnn} is what we mean when we say ERBM is conformally invariant. 

\begin{proposition}
There is a unique stochastic process $B_{\C \backslash A}^{ER}$ with state space $E=\C\backslash A \cup \set{A}$ satisfying Definition \ref{characterizationERBM}.
\end{proposition}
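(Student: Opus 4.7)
The plan is to verify that the process $B^{ER}_{\C\backslash A}$ constructed in Section \ref{chapERBMCA} as the time-changed image of $B^{ER}_{\C\backslash\D}$ under the conformal map $f$ is well defined for all $t\geq 0$ and satisfies Definition \ref{characterizationERBM}, and then to deduce uniqueness from the uniqueness already established in $\C\backslash\D$ via the conformal-invariance clause of the definition.

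The first step is to verify the integrability conditions \eqref{ERBMalltime} and \eqref{ERBMnolimit}. The function $|f'|^2$ is continuous on $\C\backslash\D$ and locally integrable (its area integral over any annulus $A_{1,R}$ equals the area of the image under $f$, which is finite). For \eqref{ERBMalltime}, by Proposition \ref{radialpartERBM} the radial part of $B^{ER}_{\C\backslash\D}$ is a reflected Brownian motion and hence recurrent on $[1,\infty)$, so the process spends infinite total time in a fixed annulus such as $A_{2,3}$ on which $|f'|^2$ admits a positive lower bound. For \eqref{ERBMnolimit}, on $[0,t]$ the path is almost surely contained in some random $A_{1,R}$ by continuity, and the integral is dominated by $\int_0^{\tau_R} |f'(B^{ER}_{\C\backslash\D}(s))|^2\,ds$, where $\tau_R$ is the first hitting time of $\{|z|=R\}$. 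Writing the expectation of this last quantity as the integral of $|f'|^2$ against the occupation measure of the killed ERBM and using the logarithmic singularity of the associated Green's function together with local integrability of $|f'|^2$ gives finiteness almost surely.

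Next I would check conditions (1)–(4) of Definition \ref{characterizationERBM}. Condition (1) is immediate since $\sigma_t$ is continuous and $f$ extends continuously to $\overline{\D}$ by sending $\D$ to $A$. Condition (2) is the conformal invariance of Brownian motion applied to the Brownian segment of $B^{ER}_{\C\backslash\D}$ preceding its first hit of $\partial\D$. For condition (3), given a smooth Jordan curve $\eta\subset\C\backslash A$ surrounding $A$, set $\eta'=f^{-1}(\eta)$ and let $U'$ be the region bounded by $\eta'$ and $\partial\D$; the time change $\sigma_t$ sends the hitting time of $\eta$ by $B^{ER}_{\C\backslash A}$ to the hitting time of $\eta'$ by $B^{ER}_{\C\backslash\D}$, and the claim follows from Proposition \ref{excursiondistERBM} together with the conformal invariance of excursion measure, which gives $\mathcal{E}_{U'}(\D,V')/\mathcal{E}_{U'}(\D,\eta')=\mathcal{E}_U(A,f(V'))/\mathcal{E}_U(A,\eta)$. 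Condition (4) is exactly Proposition \ref{CIERBMAnn}; its radial-part clause is a statement only about $\C\backslash\D$ and so imposes no additional constraint here. The strong Markov property is preserved under the time change, as noted in Section \ref{chapERBMCA}.

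For uniqueness, let $X$ be any process on $E=\C\backslash A\cup\set{A}$ satisfying Definition \ref{characterizationERBM}. By the conformal-invariance clause of (4), as formalized in Proposition \ref{CIERBMAnn}, the time-changed pullback of $X$ under $f^{-1}$ is an ERBM in $\C\backslash\D$; the uniqueness result established there forces it to have the law of $B^{ER}_{\C\backslash\D}$, and pushing forward under $f$ with the inverse time change recovers the law of $B^{ER}_{\C\backslash A}$. The main obstacle is the verification of \eqref{ERBMnolimit}: one has to handle the possible unboundedness of $|f'|$ as one approaches $\partial\D$ (which may happen, for instance, when $\partial A$ has outward cusps), balancing it against the fact that the ERBM spends zero Lebesgue time on $\partial\D$ and that its occupation measure on any compact subset of $\C\backslash\D$ has a sufficiently integrable density, before the conformal-invariance machinery of Proposition \ref{CIERBMAnn} can be invoked without circularity.
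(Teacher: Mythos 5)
Your proposal is correct and follows essentially the same route as the paper: continuity is inherited from $B^{ER}_{\C\backslash\D}$, properties (2) and (3) follow from conformal invariance of Brownian motion and of excursion measure, and uniqueness comes from clause (4) together with Proposition \ref{CIERBMAnn}. The only difference is that you inline the verification of \eqref{ERBMalltime} and \eqref{ERBMnolimit}, which the paper deliberately defers to Section \ref{sectGFPOF}; your sketch of that verification (Green's function occupation bound for finiteness on compact time intervals, recurrence of the radial part plus a lower bound on $\abs{f'}$ away from $\D$ for divergence) matches the paper's own Lemma \ref{allpathlemma} and Proposition \ref{allpaththeoremERBM}.
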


\begin{proof}
The uniqueness follows from (4) of Definition \ref{characterizationERBM} and Proposition \ref{CIERBMAnn}. Since $B^{ER}_{\C \backslash \D}$ has continuous sample paths, it is clear that $B^{ER}_{\C \backslash A}$ does as well.  The fact that $B_{\C \backslash A}^{ER}$ satisfies (2) and (3) of Definition \ref{characterizationERBM} follows from the conformal invariance of Brownian motion and excursion measure respectively.
\end{proof}

If $A_0$ is a closed subset of $\C \backslash A$ it makes sense to discuss ERBM in $\C \backslash A$ killed at $A_0$.  Most often we will do this when $A_0$ is a simple, closed curve $\eta$ surrounding $A$ and refer to the corresponding process as ERBM in $D$, where $D$ is the region bounded by $\eta$ and $\partial A$.  It is well-known that stopping a process the first time it hits a closed set preserves the Markov property and, in fact, it is not hard to verify that in our case the Feller property is preserved as well. 

\subsection{Excursion Reflected Brownian Motion in Finitely Connected Domains} \label{ERBMGeneralDef}
\label{chapERBMFC}
Let $D \in \mathcal{Y}_n$ and $E$ be as in Definition \ref{characterizationERBM}.  Intuitively, we can define ERBM in $D$ killed at $A_0$ pathwise to be a Brownian motion up until the first time it hits an $A_i$, then be ERBM in $\C\backslash A_i$ until it hits an $A_j$ with $j \neq i$, then be ERBM in $\C \backslash A_j$ and so on. We can make this rigorous by looking at infinitesimal generators.  If there were a process satisfying Definition \ref{characterizationERBM}, it is easy to check (using the fact that it behaves locally like a Brownian motion) that it would be a Feller-Dynkin process.  Furthermore, since $B_D^{ER}$ has continuous paths (and the infinitesimal generator for Brownian motion is the closure of $1/2 \Delta$), if $\mathcal{A}$ were the infinitesimal generator of $B_{D}^{ER}$ and $f\in C^{\infty}\paren{D}$ was in the domain of $\mathcal{A}$, then we would have
\begin{equation} \label{FCGenerator}
\mathcal{A}f\paren{x}=
\begin{cases}
1/2 \Delta f\paren{x} & \mbox{if } x \in D \\
0 & \mbox{if } x=A_0 \\
\mathcal{A}_i f\paren{x} & \mbox{if } x=A_i,~ 1\leq i \leq n,
\end{cases}
\end{equation}
where $\mathcal{A}_i$ is the infinitesimal generator for $B_{\C\backslash A_i}^{ER}$.  Define an operator $\mathcal{A}: D\paren{\mathcal{A}} \rightarrow C_0\paren{E}$ pointwise by \eqref{FCGenerator}, where $D\paren{\mathcal{A}}$ consists of all $f \in C_0\paren{E}$ such that $\mathcal{A}f \in C_0\paren{E}$. Using the following topological fact, it is easy to check that $D\paren{\mathcal{A}}$ and the image of $I-\mathcal{A}$ are both dense in $C_0\paren{E}$.

\begin{lemma}
Let $U_1, \ldots, U_m$ be an open cover of $E$, $S \subset C_0\paren{E}$ be a linear space, and for each $1 \leq i \leq m$, $S_i \subset S$ be a subspace of functions with support in $U_i$.  If the natural inclusion of $S_i$ into $C_0\paren{U_i}$ is dense for each $i$, then $S$ is dense in $C_0\paren{E}$.
\end{lemma}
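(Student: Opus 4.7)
The plan is to use a partition of unity argument to reduce the density question on $E$ to the density hypotheses on each $U_i$. The key observation is that $C_0(E)$ consists of functions that can be well-approximated by compactly supported functions, and any compactly supported function can be cut up via a partition of unity subordinate to the finite cover $U_1, \ldots, U_m$.

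More concretely, given $f \in C_0(E)$ and $\epsilon > 0$, I would first choose a compact set $K \subset E$ such that $|f(x)| < \epsilon/2$ off $K$. Since $K$ is compact and contained in $\bigcup_i U_i$, the local compactness of $E$ together with Urysohn's lemma yields functions $\phi_1, \ldots, \phi_m$ with $\phi_i \in C_c(E)$, $\mathrm{supp}(\phi_i) \subset U_i$, $0 \leq \phi_i \leq 1$, $\sum_i \phi_i \leq 1$ everywhere, and $\sum_i \phi_i = 1$ on $K$. Setting $g = \sum_i f \phi_i$, we have $g = f$ on $K$ and $\|g - f\|_\infty < \epsilon/2$ off $K$, so $\|g - f\|_\infty < \epsilon/2$ globally.

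Each piece $f\phi_i$ is continuous with compact support contained in $U_i$, so it belongs to $C_0(U_i)$ (extended by zero outside $U_i$). By the density hypothesis, we can choose $h_i \in S_i$ with $\|h_i - f\phi_i\|_\infty < \epsilon/(2m)$. Linearity of $S$ gives $h := \sum_i h_i \in S$, and the triangle inequality yields $\|h - f\|_\infty < \epsilon$.

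The only non-routine ingredient is the existence of the partition of unity, but for a finite open cover of a compact subset of a locally compact Hausdorff space this is a standard consequence of Urysohn's lemma: one shrinks each $U_i$ to an open set $V_i$ with $\overline{V_i}$ compact and $\overline{V_i} \subset U_i$ still covering $K$, then uses Urysohn to produce bump functions and normalizes. There is no real obstacle beyond writing this down carefully; the lemma is essentially a repackaging of the standard fact that $C_c$ functions are dense in $C_0$ combined with the partition of unity trick.
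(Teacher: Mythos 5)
Your proof is correct, and it is the standard argument: a partition of unity subordinate to the finite cover, applied to a compactly supported approximation of $f$, reduces everything to the density hypotheses on the $U_i$. The paper itself states this lemma as a ``topological fact'' and supplies no proof, so there is nothing to compare against; your write-up fills that gap correctly, including the one point worth being careful about, namely that since each $h_i$ and each $f\phi_i$ vanish off $U_i$, the sup norm over $U_i$ controls the sup norm over all of $E$, so the $C_0\paren{U_i}$-approximation really does give a global estimate.
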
 

As a result, the Hille-Yosida theorem \cite{MR838085} implies that $\mathcal{A}$ is the infinitesimal generator of a strongly continuous contraction semigroup.  
 We define $B_{D}^{ER}$ to be the corresponding Feller-Dynkin process.  It is easy to check that $B_{D}^{ER}$ defined in this way is the unique strong Markov process with state space $E$ satisfying Definition \ref{characterizationERBM}.
 
\begin{remark}
For ease of notation, we have focused on finitely connected domains, but the construction we have given works just as well for countably connected domains so long as we can find countably many Jordan curves $\eta_1, \eta_2, \ldots$ such that each boundary component is in the interior of exactly one $\eta_i$. 
\end{remark}

\subsection{A Markov Chain Associated with ERBM} \label{MarkovChainERBM}
\label{chapMC}
Let $D\in \mathcal{Y}_n$, $\eta_i$, for $1\leq i \leq n$, be as in Definition \ref{characterizationERBM} and $h_i$, for $1 \leq i \leq n$, be the unique bounded harmonic function on $D$ that is equal to $1$ on $\partial A_i$ and $0$ on $\partial A_j$ for $j \neq i$ (note that $h_j\paren{z}$ is the probability that a Brownian motion started at $z$ exits $D$ at $A_j$).  ERBM on $D$ induces a discrete time Markov chain $X$ with state space $\set{A_0, \ldots, A_n}$ (see \cite{MR2247843} pg. 37).  The probability that the chain moves from $A_i$ to $A_j$ is equal to the probability that $A_j$ is the first boundary component of $D$ that $B_D^{ER}$ started at $A_i$ hits after the first time it hits $\eta_i$.  That is, the chain has transition probabilities $p_{00}=1$ and
\begin{equation*}
p_{i j}=\int_{\eta_i} h_j\paren{z} \frac{H_{\partial U_i}\paren{A_i,z}}{\mathcal{E}_{U_i}\paren{A_i, \eta_i}} \abs{dz},
\end{equation*}
for $i\neq 0$.  This Markov chain is not entirely satisfactory since it is highly dependent on the particular choice of $\eta_1, \ldots ,\eta_n$.  By erasing all of the loops from $X$ we obtain a Markov chain $Y$ with transition probabilities $q_{0 0}=1$, $q_{i i}=0$ for $i>0$, and
\begin{equation*}
q_{i j}=\frac{p_{i j}}{1-p_{ii}},
\end{equation*}
for $i \neq j$.  It is not hard to see that $Y$'s transition probabilities are independent of the choice of $\eta_1, \ldots ,\eta_n$.  Since $q_{j0}>0$ for all $1 \leq j \leq n$, the eigenvalues of the transition matrix $\mathbf{Q}$, for $Y$ restricted to $A_1, \ldots, A_n$, have absolute value strictly less than one and, using standard results from Markov chain theory, we have that the Green's matrix
\begin{equation}\label{GreenMatrixFormula}
\mathbf{I}+\mathbf{Q}+\mathbf{Q}^2+\cdots + \mathbf{Q}^n+ \cdots=\paren{\mathbf{I}-\mathbf{Q}}^{-1}.
\end{equation}
is well-defined.

\subsection{Excursion Reflected Harmonic Functions}
\begin{definition}\label{ERHarmonicDef}
A function
\[v:E \rightarrow \R\]
is called \emph{ER-harmonic} if it satisfies
\begin{enumerate}
\item $v$ is continuous on $E$ and is harmonic when restricted to $D$
\item For $1 \leq i \leq n$, if $\eta$ is a Jordan curve surrounding $A_i$, then
\begin{equation} \label{ERBMMeanvalue}
v\paren{A_i}=\int_{\eta} v\paren{z} \frac{H_{\partial U_i}\paren{A_i,z}}{\mathcal{E}_{U_i}\paren{A_i, \eta}} \abs{dz},
\end{equation}
where $U_i$ is the region bounded by $\eta$ and $\partial A_i.$
\end{enumerate}
\end{definition}

If it is clear what is meant, we will sometimes speak of the ER-harmonicity of a function with domain $D$ rather than $E$.  By an ER-harmonic function on $D-\set{z}$ or $D-\set{A_i}$ we mean a function that satisfies Definition \ref{ERHarmonicDef} except that (2) is not necessarily satisfied for curves surrounding $z$ and $A_i$ respectively.

The following is a useful criterion for a function to be ER-harmonic.

\begin{lemma}\label{ERHarmonicCondition}
Let $\eta$ be a smooth Jordan curve surrounding $A_j$ and not surrounding $A_i$ for $i\neq j$. Then for any harmonic function $v$ on $D$ we have
\begin{equation*}
\int_{\eta} v\paren{z} H_{\partial U_j}\paren{A_j,z}\abs{dz}=v\paren{A_j}\mathcal{E}_{U_j}\paren{A_j,\eta}+ \int_{\eta} \frac{d}{dn}~ v\paren{z}\abs{dz},
\end{equation*}
where $U_j$ is the region bounded by $\eta$ and $\partial A_j$.  In particular, if $v$ is continuous on $E$, then $v$ is ER-harmonic if and only if for each $i$ there is an $\eta_i$ surrounding $A_i$ with
\[\int_{\eta_i} \frac{d}{dn}~ v\paren{z}\abs{dz}=0.\]
\end{lemma}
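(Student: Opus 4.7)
The plan is to derive the identity from Green's second identity on $U_j$, applied to $v$ and a harmonic auxiliary function $u$, then to read off each resulting boundary integral in terms of $v(A_j)$, $\mathcal{E}_{U_j}(A_j,\eta)$, the boundary Poisson kernel, and the flux $\int_\eta \partial_n v\,|dz|$. All three quantities appearing in the desired identity are conformally invariant --- excursion measure and the integral against $H_{\partial U_j}(A_j,z)\,|dz|$ by \eqref{BdPKCI}, and the flux as a conformally invariant period of a harmonic function --- so I may first conformally map $\C\setminus A_j$ onto $\C\setminus\overline{\D}$ and reduce to the case that $\partial A_j$ is analytic. Continuity of $v$ on $E$ forces $v$ to be constant on $\partial A_j$, and Schwarz reflection across the analytic arc $\partial A_j$ upgrades this to smoothness of $v$ on $\overline{U_j}$, so that Green's identity applies in its classical form.

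Let $u$ be the harmonic function on $U_j$ with $u\equiv 0$ on $\partial A_j$ and $u\equiv 1$ on $\eta$; equivalently,
\[u(z)=\int_\eta H_{U_j}(z,w)\,|dw|=1-\int_{\partial A_j} H_{U_j}(z,w)\,|dw|.\]
Write $n$ for the outward unit normal to $U_j$. Differentiating the first expression at $z\in\partial A_j$ (where $-n$ points into $U_j$) and the second at $z\in\eta$, and using the definition of $H_{\partial U_j}$ via the inward normal together with the symmetry of $H_{\partial U_j}$, one obtains
\[\int_{\partial A_j}\partial_n u\,|dz|=-\mathcal{E}_{U_j}(A_j,\eta),\qquad \partial_n u(z)=H_{\partial U_j}(A_j,z)\ \text{for}\ z\in\eta.\]
Green's identity $\int_{\partial U_j}(u\,\partial_n v - v\,\partial_n u)\,|dz|=0$, combined with $u\equiv 0$ and $v\equiv v(A_j)$ on $\partial A_j$ and $u\equiv 1$ on $\eta$, then reads
\[v(A_j)\,\mathcal{E}_{U_j}(A_j,\eta)+\int_\eta \partial_n v\,|dz|-\int_\eta v(z)\,H_{\partial U_j}(A_j,z)\,|dz|=0,\]
which is the claimed identity after rearrangement. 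The main thing requiring care is bookkeeping inward versus outward normal signs when relating $\partial_n u$ to the boundary Poisson kernel --- this is the only real opportunity for error.

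For the ``in particular'' claim, the identity immediately shows that $v$ satisfies the ER-harmonic mean-value relation \eqref{ERBMMeanvalue} at $A_i$ relative to a particular $\eta$ if and only if $\int_\eta \partial_n v\,|dz|=0$. To upgrade the existence of a single such curve to ER-harmonicity at $A_i$, I note that the flux $\int_\eta \partial_n v\,|dz|$ is independent of the choice of smooth Jordan curve surrounding $A_i$ and no other boundary component: applying the divergence theorem to $\Delta v = 0$ on the annular region between any two such curves equates their fluxes. Hence the existence, for each $i$, of one $\eta_i$ with vanishing flux forces \eqref{ERBMMeanvalue} for every admissible $\eta$, so $v$ is ER-harmonic.
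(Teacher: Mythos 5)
Your argument is correct. The paper itself gives no proof of this lemma (it simply cites \cite{MR2247843}), and your Green's-identity computation with the auxiliary harmonic function $u$ (harmonic measure of $\eta$ in $U_j$) is the standard argument one would expect to find there: the sign bookkeeping checks out, since with $n$ the outward normal of $U_j$ one has $\partial_n u = H_{\partial U_j}\paren{A_j,\cdot}$ on $\eta$ and $\int_{\partial A_j}\partial_n u\,\abs{dz}=-\mathcal{E}_{U_j}\paren{A_j,\eta}$, which yields exactly the stated identity, and the reduction to analytic $\partial A_j$ by conformal invariance of all three terms plus Schwarz reflection legitimately justifies the boundary regularity needed for Green's identity. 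The homology/divergence-theorem observation that the flux $\int_\eta \frac{d}{dn}v\,\abs{dz}$ is the same for all admissible curves is precisely what upgrades ``one $\eta_i$ per $i$'' to ER-harmonicity. The only caveat worth recording is that the first identity, as you note implicitly, requires $v$ to extend continuously to the point $A_j$ of $E$ (so that $v\paren{A_j}$ is defined and $v$ is constant on $\partial A_j$); this hypothesis is tacit in the lemma's statement, since otherwise $v\paren{A_j}$ has no meaning, so assuming it is not a gap.
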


\begin{proof}
See \cite{MR2247843} pg. 17.
\end{proof}

As with harmonic functions, if we specify suitable boundary conditions, there is a unique ER-harmonic function with these boundary conditions.  The key to proving this uniqueness is a maximal principle for ER-harmonic functions.

\begin{lemma}[Maximal principle for ER-harmonic functions] \label{ERBMMaximalprinciple}
Let $v: E \cup \partial A_0 \rightarrow \R$ be a bounded, continuous function that is ER-harmonic when restricted to $E$.  Then
\begin{enumerate}
\item  The maximum of value of $v$ is equal to the maximum value of $v$ restricted to $\partial A_0$.
\item  If there is a $z \in E$ such that $v$ attains its maximum at $z$, then $v$ is constant.
\end{enumerate}
\end{lemma}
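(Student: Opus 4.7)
The plan is to prove statement (2) first, from which (1) follows immediately: if the maximum of $v$ on $E \cup \partial A_0$ is attained at a point of $E$, then (2) forces $v$ to be constant and so its maximum equals its value on $\partial A_0$; otherwise the maximum is attained only on $\partial A_0$. To prove (2), I would suppose $v$ achieves its maximum $M$ at some $z_0 \in E$ and split into cases according to whether $z_0 \in D$ or $z_0 \in \set{A_1,\ldots,A_n}$.

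If $z_0 \in D$, then $v$ is harmonic on the connected open set $D$ and achieves an interior maximum, so the classical strong maximum principle for harmonic functions gives $v \equiv M$ on $D$. Continuity of $v$ on $E$ (under the quotient topology, in which each $A_i$ is a limit of sequences from $D$) then propagates this to $v(A_i) = M$ for each $1 \le i \le n$, hence $v \equiv M$ on $E$.

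If instead $z_0 = A_i$ for some $1 \le i \le n$, I would choose a smooth Jordan curve $\eta \subset D$ surrounding $A_i$ (and no other $A_j$), let $U_i$ be the region bounded by $\partial A_i$ and $\eta$, and apply the ER-harmonic mean value property \eqref{ERBMMeanvalue}:
\[
M = v(A_i) = \int_{\eta} v(z)\, \frac{H_{\partial U_i}(A_i,z)}{\mathcal{E}_{U_i}(A_i,\eta)}\, \abs{dz}.
\]
The weight is a continuous, strictly positive probability density on $\eta$, and $v \le M$ everywhere on $\eta$, so equality in the integral combined with continuity of $v$ forces $v \equiv M$ on $\eta$. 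Since $\eta \subset D$, this places an interior maximum of $v$ on $D$, and the first case now applies to conclude $v \equiv M$ on $E$.

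The main point requiring care is verifying strict positivity and continuity of the density $H_{\partial U_i}(A_i,\cdot)/\mathcal{E}_{U_i}(A_i,\eta)$ on $\eta$: strict positivity reduces to positivity of the boundary Poisson kernel between the disjoint analytic arcs $\partial A_i$ and $\eta$ (a Hopf-lemma style fact), and continuity follows from smoothness of $\eta$ and $\partial A_i$. Once this analytic input is in place, the argument is a direct application of the classical strong maximum principle together with continuity of $v$ across the quotient topology.
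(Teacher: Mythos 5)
Your proposal is correct and follows essentially the same route as the paper: reduce (1) to (2), handle an interior maximum in $D$ by the classical strong maximum principle, and handle a maximum at some $A_i$ by using the mean value property \eqref{ERBMMeanvalue} to produce a maximum point on a surrounding curve $\eta\subset D$, reducing to the first case. Your version just makes explicit the positivity of the density $H_{\partial U_i}\paren{A_i,\cdot}/\mathcal{E}_{U_i}\paren{A_i,\eta}$, which the paper leaves implicit in the phrase ``it is clear there is some $z'\in D$ where $v$ also attains its maximum.''
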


\begin{proof}
It is clear that (2) implies (1), so it is enough to prove (2).  Let $z$ be a point where $v$ attains its maximum.  If $z \in D$, then by the strong maximal principle for harmonic functions \cite{MR1625845}, $v$ is constant.  If $z = A_i$, then using \eqref{ERBMMeanvalue} it is clear there is some $z' \in D$ where $v$ also attains its maximum and thus, $v$ is constant.
\end{proof}

\begin{proposition} \label{ERHarmonicFromERBM}
Suppose that $\partial A_0$ has at least one regular point for Brownian motion and let $F: \partial A_0 \rightarrow \R$ be a bounded, measurable function.  Define
\[v: \overline{D} \rightarrow \R\]
by
\[v \paren{z}=\ev{z}{F\paren{B^{ER}_D\paren{\tau_D}}},\]
where $\tau_D$ is the first time $B_D^{ER}$ hits $A_0$.  Then $v$ is a bounded $ER$-harmonic function on $D$ that is continuous at all regular points of $\partial A_0$ at which $F$ is continuous.  Furthermore, if every point of $\partial A_0$ is regular and $F$ is continuous, then $v$ is the unique ER-harmonic function that is equal to $F$ on $\partial A_0$.
\end{proposition}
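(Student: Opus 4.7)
The plan is to verify the four assertions in turn: boundedness, ER-harmonicity on $D$ in the sense of Definition \ref{ERHarmonicDef} (which includes continuity on $E$), continuity at regular points of $\partial A_0$ where $F$ is continuous, and uniqueness via the maximal principle of Lemma \ref{ERBMMaximalprinciple}. Boundedness is immediate since $\abs{v(z)} \leq \|F\|_\infty$. The central tool throughout is the strong Markov property of $B_D^{ER}$, combined with the fact (from parts (2) and (3) of Definition \ref{characterizationERBM}) that the process behaves like a Brownian motion while in $D$ and that its hitting distribution on a Jordan curve surrounding $A_i$ is governed by excursion measure.

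For harmonicity on $D$, I would fix $z \in D$ and a ball $B_r(z) \subset D$; applying the strong Markov property at the exit time $\sigma_r$ of $B_r(z)$ yields $v(z) = \ev{z}{v(B_D^{ER}(\sigma_r))}$, and since $B_D^{ER}$ is a Brownian motion up to $\sigma_r$, this equals the average of $v$ over $\partial B_r(z)$, so $v$ is harmonic on $D$. For the ER-mean value property \eqref{ERBMMeanvalue} at each $A_i$, I would apply the strong Markov property to $B_D^{ER}$ started at $A_i$ at the first hitting time $\sigma$ of a Jordan curve $\eta$ surrounding $A_i$; by Definition \ref{characterizationERBM} (3), $B_D^{ER}(\sigma)$ is distributed according to $H_{\partial U_i}(A_i,\cdot)/\mathcal{E}_{U_i}(A_i,\eta)$ on $\eta$, yielding \eqref{ERBMMeanvalue} at once.

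Continuity of $v$ on $D$ follows from harmonicity. For continuity at each $A_i$, I would fix $\eta$ surrounding $A_i$ and, for $z$ in the region $U_i$ bounded by $\eta$ and $\partial A_i$, let $\sigma$ be the first exit time of $U_i$; since $U_i$ contains no other $A_j$, $B_D^{ER}$ acts as a Brownian motion up to $\sigma$, and the strong Markov property gives
\[ v(z) = \int_{\eta} v(w)\, H_{U_i}(z,w)\, \abs{dw} + v(A_i)\, \prob{z}{B_D^{ER}(\sigma) \in \partial A_i}. \]
Both terms are continuous in $z \in U_i$; as $z \to \partial A_i$, the hitting probability tends to $1$ (every point of $\partial A_i$ is regular since $A_i$ is a simply connected compactum larger than a point), while the first term vanishes because $\int_\eta H_{U_i}(z,w)\,\abs{dw} \to 0$, so $v(z) \to v(A_i)$. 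For continuity at a regular point $w^* \in \partial A_0$ where $F$ is continuous at $w^*$, the analogous decomposition at the first hit $T$ of $\partial D$ gives
\[ v(z) = \ev{z}{F(B_D^{ER}(T));\, B_D^{ER}(T) \in \partial A_0} + \sum_{i \geq 1} v(A_i)\, \prob{z}{B_D^{ER}(T) \in \partial A_i}. \]
Up to time $T$, $B_D^{ER}$ is a Brownian motion, so regularity of $w^*$ and continuity of $F$ at $w^*$ make the first term tend to $F(w^*)$ by standard Dirichlet theory for Brownian motion; each probability in the sum is a bounded harmonic function of $z$ vanishing continuously at $w^*$, so the right side tends to $F(w^*)$.

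For uniqueness under the hypothesis that every point of $\partial A_0$ is regular and $F$ is continuous, suppose $\tilde v$ is any other bounded ER-harmonic function continuous on $E \cup \partial A_0$ with $\tilde v|_{\partial A_0} = F$. Then $v - \tilde v$ is bounded, continuous on $E \cup \partial A_0$, ER-harmonic on $E$, and vanishes on $\partial A_0$; Lemma \ref{ERBMMaximalprinciple} applied to both $v - \tilde v$ and $\tilde v - v$ then forces $v \equiv \tilde v$. I expect the main technical subtlety to lie in the continuity at the $A_i$ in the quotient topology on $E$, which rests on knowing that the Brownian hitting probability of $\partial A_i$ approaches $1$ as $z \to \partial A_i$; this uses regularity of $\partial A_i$, a standard consequence of $A_i$ being a simply connected compactum larger than a point. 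The remaining pieces are essentially formal consequences of the strong Markov property.
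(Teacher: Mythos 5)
Your proposal is correct and follows essentially the same route as the paper's (much terser) proof: boundedness is immediate, harmonicity and continuity at regular points of $\partial A_0$ are handled as in the classical Brownian Dirichlet problem, the mean value property \eqref{ERBMMeanvalue} comes from the strong Markov property together with part (3) of Definition \ref{characterizationERBM}, and uniqueness follows from Lemma \ref{ERBMMaximalprinciple}. The only difference is that you spell out the continuity of $v$ at each $A_i$ in the quotient topology, a point the paper leaves implicit; your treatment of it is sound.
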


\begin{proof}
It is clear from the fact that $F$ is bounded that $v$ is also bounded.  The proof that $v$ is harmonic and continuous at the regular points of $A_0$ at which $F$ is continuous is similar to the proof of the corresponding result for Brownian motion (see \cite{MR2604525}).  The fact that \eqref{ERBMMeanvalue} holds follows from the strong Markov property for ERBM and (3) of Definition \ref{characterizationERBM}.  The uniqueness statement follows from a straightforward application of Lemma \ref{ERBMMaximalprinciple}.
\end{proof}

\section{The Poisson Kernel for ERBM}\label{sectPK}
\subsection{Definition and Basic Properties}

Throughout this section, let $D \in \mathcal{Y}$ be such that it is possible to define a Green's function $G_D\paren{z,w}$ for Brownian motion.  Recall that we normalize $G_D\paren{z,\cdot}$ so that it is a density for the expected amount of time a Brownian motion started at $z$ spends in a set before exiting $D$.

\begin{definition}
\[G^{ER}_{D}\paren{z,\cdot}:E \rightarrow \R\]
is a \emph{Green's function for ERBM} if for any Borel subset $V \subset D$
\begin{equation}\label{GrDef}
\mu_z\paren{V}:=\ev{z}{\int_0^{\tau_{D}} \mathbf{1}_{V}\paren{B_{D}^{ER}\paren{t}} dt}=\int_V G^{ER}_{D}\paren{z,w} dw,
\end{equation}
where $\tau_{D}=\inf \set{t: B_{D}^{ER}\paren{t} \in \partial A_0}$.
\end{definition}

Using the definition of ERBM and the analogous fact for Brownian motion, it is easy to prove that the probability that ERBM started at $z$ is in a set of Lebesgue measure zero at some fixed time is $0$.  Combining this fact with Fubini's theorem, we see that if $V$ has Lebesgue measure zero, then
\[\mu_z\paren{V}=\int_0^{\tau_{D}} \prob{z}{B_{D}^{ER}\paren{t} \in V} dt =0.\]
As a result, we can define $G^{ER}_{D}\paren{z,\cdot}$ as a Radon-Nikodym derivative.  Furthermore, we have 
\begin{equation}\label{RNGreenERBM}
\displaystyle G_D^{ER}\paren{z,w}=\lim_{\epsilon \rightarrow 0} \frac{\mu_z\paren{B\paren{w,\epsilon}}}{m\paren{B\paren{w,\epsilon}}}
\end{equation}
is a Green's function for ERBM, where $m$ is Lebesgue measure.  A priori, there is no reason the Green's function as defined cannot be infinite on a set of positive measure.  This potential issue will be resolved by Proposition \ref{ERBMGreenDecomp} and \eqref{GreenMatrixGreenFunction}.

We have only given a probabilistic definition of $G_D^{ER}\paren{z,\cdot}$ and our definition is unique only as an element of $L^1\paren{D}$.  It is also possible to give an analytic characterization of $G_D^{ER}\paren{z,\cdot}$.  More specifically, we will prove that there is a version of $G_D^{ER}\paren{z,\cdot}$ that is the unique ER-harmonic function on $D-\set{z}$ satisfying certain boundary conditions (that depend on whether or not $z$ is equal to some $A_i$).  In particular, this will allow us to talk about ``the'' Green's function for ERBM rather than ``a'' Green's function.  We start by proving an analog of \eqref{ERBMPKD} for $G_D^{ER}\paren{z,\cdot}.$

\begin{proposition}\label{ERBMGreenDecomp}
\[G_{D}\paren{z,w}+\sum_{i=1}^n h_i\paren{z}G_{D}^{ER}\paren{A_i,w}\] is a version of $G_D^{ER}\paren{z, \cdot}$.
\end{proposition}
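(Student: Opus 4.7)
The plan is to decompose the occupation measure of $B^{ER}_D$ using the strong Markov property at $\sigma := \inf \set{t : B_D^{ER}\paren{t} \in \partial D}$, the first time the process hits any boundary component of $D$. Fix $z \in D$ and a Borel set $V \subset D$, and split
\[
\mu_z\paren{V} = \ev{z}{\int_0^{\sigma} \mathbf{1}_V\paren{B_D^{ER}\paren{t}}~dt} + \ev{z}{\int_{\sigma}^{\tau_D} \mathbf{1}_V\paren{B_D^{ER}\paren{t}}~dt}.
\]
By Proposition \ref{ERBMlikeBM} together with the construction of $B^{ER}_D$ in Section \ref{chapERBMFC}, the process coincides with a Brownian motion in $D$ up to time $\sigma$, so the first summand equals $\int_V G_D\paren{z,w}~dw$ by the defining property of $G_D$.

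For the second summand, apply the strong Markov property at $\sigma$. On the event $\set{B^{ER}_D\paren{\sigma} \in \partial A_0}$ we have $\sigma = \tau_D$ and nothing is contributed. On $\set{B^{ER}_D\paren{\sigma} \in \partial A_i}$ with $i \geq 1$, the hit point is identified with $A_i$ in the quotient topology of $E$, and by the construction in Section \ref{chapERBMFC} the process resumes from that instant as a copy of $B^{ER}_D$ started at $A_i$. Since $B^{ER}_D$ is a Brownian motion up to $\sigma$, $\prob{z}{B^{ER}_D\paren{\sigma} \in \partial A_i} = h_i\paren{z}$. Conditioning on which boundary component is hit first therefore gives
\[
\ev{z}{\int_{\sigma}^{\tau_D} \mathbf{1}_V\paren{B_D^{ER}\paren{t}}~dt} = \sum_{i=1}^n h_i\paren{z} \ev{A_i}{\int_0^{\tau_D} \mathbf{1}_V\paren{B_D^{ER}\paren{t}}~dt} = \sum_{i=1}^n h_i\paren{z} \int_V G_D^{ER}\paren{A_i,w}~dw,
\]
where the final equality is \eqref{GrDef} applied with starting point $A_i$.

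Combining the two pieces yields $\mu_z\paren{V} = \int_V \bracket{G_D\paren{z,w} + \sum_{i=1}^n h_i\paren{z} G_D^{ER}\paren{A_i,w}}~dw$, and uniqueness of the Radon-Nikodym derivative in \eqref{GrDef} identifies the bracketed function as a version of $G_D^{ER}\paren{z,\cdot}$. The boundary cases $z \in \set{A_0, \ldots, A_n}$ reduce immediately to this using $h_i\paren{A_j} = \delta_{ij}$ and the convention $G_D\paren{A_i, w} = 0$ (a Brownian motion from a regular point of $\partial D$ exits instantly). The only genuine subtlety is the strong Markov step: one must invoke the fact, built into the construction of ERBM in finitely connected domains, that once $B^{ER}_D$ touches $\partial A_i$ the future evolves as $B^{ER}_D$ started at the quotient point $A_i \in E$, rather than as a fresh ERBM in some proper subdomain.
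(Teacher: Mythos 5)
Your argument is correct and is exactly the expansion the paper intends: the paper's proof is a one-line appeal to the strong Markov property at the first hitting time of $\partial D$ together with the fact that ERBM is a Brownian motion before that time, and you have simply written out that decomposition in full. No substantive difference in approach.
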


\begin{proof}
This follows easily using the strong Markov property for ERBM and the fact that up until the first time it hits $\partial D$, ERBM has the distribution of a Brownian motion.
\end{proof}

As we expect, $G_D^{ER}\paren{z, \cdot}$ is conformally invariant.  To prove this we need the following lemma, which is a straightforward exercise in measure theory.

\begin{lemma}
If $g\in L^1\paren{D}$, then for all Borel $V \subset D$ we have
\begin{equation*}
\ev{z}{\int_0^{\tau_{D}} \mathbf{1}_{V}\paren{B^{ER}_{D}\paren{t}}g\paren{B^{ER}_{D}\paren{t}} dt}=\int_V G^{ER}_{D}\paren{z,w}g\paren{w} dw.
\end{equation*}
\label{greendefExtension}
\end{lemma}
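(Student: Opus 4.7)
The plan is to extend the defining Radon--Nikodym identity \eqref{GrDef} from indicator functions of Borel sets to all integrable $g$ by the standard monotone class / layer--cake argument. The content of Lemma~\ref{greendefExtension} is really just the statement that for the occupation measure $\mu_z$ on $D$, we have the change--of--variables formula $\int h\,d\mu_z = \int h(w)\,G_D^{ER}(z,w)\,dw$ restricted to subsets $V$.

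First I would verify the identity for $g = \mathbf{1}_W$ with $W\subset D$ Borel: here the left--hand side equals
\[\ev{z}{\int_0^{\tau_D} \mathbf{1}_{V\cap W}\paren{B_D^{ER}(t)}\,dt} = \mu_z(V\cap W),\]
which by \eqref{GrDef} applied to $V\cap W$ equals $\int_{V\cap W} G_D^{ER}(z,w)\,dw = \int_V \mathbf{1}_W(w)G_D^{ER}(z,w)\,dw$. By linearity this extends to non--negative simple functions, and by the monotone convergence theorem (applied on the left via Tonelli, interchanging $\ev{z}{\cdot}$ with the time integral, and on the right in the usual sense) the identity passes to arbitrary non--negative measurable $g$, both sides possibly taking the value $+\infty$.

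For the final step I would split $g = g^+ - g^-$ with $g^\pm \in L^1$, apply the non--negative case to each piece, and subtract. The subtraction is legitimate because $\mu_z$ is a finite measure: Proposition~\ref{ERBMGreenDecomp} expresses $G_D^{ER}(z,\cdot)$ as a sum of the Brownian motion Green's function $G_D(z,\cdot)$ and bounded multiples of the values $G_D^{ER}(A_i,\cdot)$, so $G_D^{ER}(z,\cdot) \in L^1_{\mathrm{loc}}$ and $\mu_z(D) = \ev{z}{\tau_D} < \infty$ in the relevant cases; combined with $g \in L^1(D)$ this yields finiteness of both sides.

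There is no real obstacle here -- the argument is a routine measure--theoretic extension. The only point requiring mild care is the justification of Tonelli's theorem in the non--negative step (which holds because the integrand $\mathbf{1}_V(B_D^{ER}(t))g(B_D^{ER}(t))$ is a non--negative jointly measurable function of $(\omega,t)$) and the finiteness needed to subtract $g^+$ and $g^-$ contributions, both of which follow from the preceding results on ERBM and its Green's function.
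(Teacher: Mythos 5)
Your overall strategy --- verify the identity for $g=\mathbf{1}_W$ directly from \eqref{GrDef} applied to $V\cap W$, extend by linearity to simple functions, pass to nonnegative measurable $g$ by monotone convergence with Tonelli justifying the interchange of $\ev{z}{\cdot}$ with the time integral, and finally write $g=g^+-g^-$ --- is precisely the ``straightforward exercise in measure theory'' the paper alludes to; the paper supplies no proof of this lemma, so there is nothing substantive to diverge from.

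The one step whose justification does not hold up is the subtraction at the end. You assert that $\mu_z\paren{D}=\ev{z}{\tau_D}<\infty$ together with $g\in L^1\paren{D}$ yields finiteness of both sides. That inference is false: $\mu_z$ is indeed finite and mutually absolutely continuous with Lebesgue measure, but its density $G_D^{ER}\paren{z,\cdot}$ has a logarithmic singularity at $z$ (Proposition \ref{ERBMGreenUnique}), so $g\in L^1\paren{dw}$ does not imply $g\in L^1\paren{d\mu_z}$. Concretely, $g\paren{w}=\abs{z-w}^{-2}\paren{\log\abs{z-w}}^{-2}$ near $z$ is Lebesgue--integrable in the plane, yet $\int_V G_D^{ER}\paren{z,w}g\paren{w}\,dw=\infty$ for any neighborhood $V$ of $z$; choosing $g^+$ and $g^-$ each of this type on disjoint sets makes the right--hand side an ill--defined $\infty-\infty$. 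The fix is either to read the lemma for nonnegative $g$ with both sides allowed to equal $+\infty$ --- which covers every use in the paper, since Lemma \ref{allpathlemma} and Proposition \ref{ERBMGreenCI} apply it only to $g=\abs{f'}^2\geq 0$ --- or to add the hypothesis $\int_V G_D^{ER}\paren{z,w}\abs{g\paren{w}}\,dw<\infty$ before subtracting. With that caveat your argument is complete.
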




\begin{proposition}\label{ERBMGreenCI}
If $f: D \rightarrow D'$ is a conformal map, then
\[G^{ER}_{D}\paren{f^{-1}\paren{z},f^{-1}\paren{\cdot}}\]
is a version of $G^{ER}_{D'}\paren{z,\cdot}$.
\end{proposition}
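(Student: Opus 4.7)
The plan is to verify the defining integral identity \eqref{GrDef} for the candidate function $G^{ER}_D(f^{-1}(z), f^{-1}(\cdot))$ by combining the conformal time-change formula for ERBM with a two-step change of variables. Let $V \subset D'$ be Borel; I will show
\[
\ev{z}{\int_0^{\tau_{D'}} \mathbf{1}_V(B^{ER}_{D'}(t))\, dt} = \int_V G^{ER}_D(f^{-1}(z), f^{-1}(w))\, dw,
\]
and then appeal to uniqueness of the Radon--Nikodym derivative in \eqref{RNGreenERBM}.

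First I would invoke Proposition \ref{CIERBMAnn} (conformal invariance of ERBM) to realize $B^{ER}_{D'}$ as the time-changed image of $B^{ER}_D$ started at $f^{-1}(z)$: namely $B^{ER}_{D'}(t) = f(B^{ER}_D(\sigma_t))$, where $\sigma_t$ is determined by $\int_0^{\sigma_t} |f'(B^{ER}_D(s))|^2\, ds = t$. Note also that $\tau_{D'}$ corresponds under this time change to $\tau_D$, since $f$ is a bijection between $\partial A_0 \subset \partial D$ and the analogous killing boundary of $D'$, and the time change preserves the hitting order of the boundary components. Applying the change-of-variable $t \mapsto s$, $dt = |f'(B^{ER}_D(s))|^2\, ds$, converts the occupation integral to
\[
\int_0^{\tau_{D'}} \mathbf{1}_V(B^{ER}_{D'}(t))\, dt = \int_0^{\tau_D} \mathbf{1}_{f^{-1}(V)}(B^{ER}_D(s))\, |f'(B^{ER}_D(s))|^2\, ds.
\]

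Taking expectations and applying Lemma \ref{greendefExtension} with $g(u) = |f'(u)|^2$ (which is continuous and bounded on compact subsets of $D$, and integrable against the occupation measure since the total occupation time is a.s.\ finite by Proposition \ref{ERBMGreenDecomp}) gives
\[
\ev{z}{\int_0^{\tau_{D'}} \mathbf{1}_V(B^{ER}_{D'}(t))\, dt} = \int_{f^{-1}(V)} G^{ER}_D(f^{-1}(z), u)\, |f'(u)|^2\, du.
\]
Now I perform the spatial change of variable $u = f^{-1}(w)$; since $f$ is conformal its real Jacobian determinant is $|f'|^2$, so $|f'(u)|^2\, du = dw$, and the right-hand side becomes $\int_V G^{ER}_D(f^{-1}(z), f^{-1}(w))\, dw$, as required. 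Because $V$ was arbitrary, \eqref{GrDef} holds for the candidate function, which is therefore a version of $G^{ER}_{D'}(z, \cdot)$.

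The main obstacle is really bookkeeping rather than conceptual: I need to justify that the time change sends $\tau_D$ to $\tau_{D'}$ (this uses \eqref{ERBMalltime} and \eqref{ERBMnolimit} to ensure $\sigma_t$ is a bijection on $[0, \tau_{D'}) \to [0, \tau_D)$), and I need to apply Lemma \ref{greendefExtension} with $g = |f'|^2$, which a priori is only locally bounded rather than globally in $L^1$. The second point can be handled by truncating: apply the lemma on $f^{-1}(V) \cap K_n$ for an exhaustion $K_n \uparrow D$ by compacta on which $|f'|^2$ is bounded, and pass to the limit by monotone convergence on both sides. Once these two technicalities are dispatched, the chain of equalities above closes the proof.
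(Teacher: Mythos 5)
Your proposal is correct and follows essentially the same route as the paper's proof: both rest on Proposition \ref{CIERBMAnn}, the time change $\int_0^{\sigma_t}\abs{f'\paren{B^{ER}_D\paren{s}}}^2\,ds=t$, Lemma \ref{greendefExtension} with $g=\abs{f'}^2$, and the spatial change of variables with Jacobian $\abs{f'}^2$ --- you merely run the chain of equalities in the opposite direction, and your explicit truncation argument for the case $\abs{f'}^2\notin L^1\paren{D}$ and your remark that the finiteness of the time-change integral must be justified separately (the paper defers this to Proposition \ref{allpaththeoremERBM}) are welcome refinements rather than departures.
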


\begin{proof}
It is enough to show that $G^{ER}_{D}\paren{f^{-1}\paren{z},f^{-1}\paren{\cdot}}$ satisfies \eqref{GrDef} for all open subsets of $D'$.  Let $V'$ be an open subset of $D'$ and $V=f^{-1}\paren{V'}$. Using Lemma \ref{greendefExtension} and the change of variables formula, we have
\begin{align}
\int_{V'} G^{ER}_{D}\paren{z,f^{-1}\paren{w}} dw &= \int_{V}G^{ER}_{D}\paren{z,w} \abs{f'\paren{w}}^2 dw\nonumber\\ \label{proofCI2}
&=\ev{z}{\int_0^{\tau_D} \abs{\mathbf{1}_{V}\paren{B^{ER}_{D}\paren{t}}f'\paren{B^{ER}_{D}\paren{t}}}^2 dt}.
\end{align}
Let
\begin{equation}
u\paren{t}=\int_0^t \abs{f'\paren{B^{ER}_{D}\paren{s}}}^2 ds.
\label{proofCI3}
\end{equation}
Substituting $u^{-1}\paren{r}$ for $t$ and using the conformal invariance of ERBM, we see that \eqref{proofCI2} is equal to
\begin{equation}
\ev{z}{\int_0^{\tau_{D'}} \mathbf{1}_{V'}\paren{B^{ER}_{D}\paren{t}} dt},
\label{proofCI4}
\end{equation}
which completes the proof.
\end{proof}
In the proof of Proposition \ref{ERBMGreenCI}, observe that we can only conclude that \eqref{proofCI2} is equal to \eqref{proofCI4} if \eqref{proofCI3} is almost surely finite for all $t<\infty$.  This will be addressed when we prove \eqref{ERBMnolimit}.

In order to prove that $G_D^{ER}\paren{z,\cdot}$ is ER-harmonic, we will need to compute $G_{A_{1,r}}\paren{A_1,\cdot}$.

\begin{lemma}  \label{GreensFunctionAnnulus} Let $A_{1,r} \in \mathcal{Y}_1$ be the annulus with $A_1=\overline{\D}$ and $\partial A_0=\partial B_r\paren{0}$ for some $r>1$ and $B_t$ be a Brownian motion in $r \D$. If $V$ is a Borel set bounded away from $A_1$, then
\begin{equation*}
\ev{A_1}{\int_0^{\tau_{A_{1,r}}} \mathbf{1}_{V}\paren{B_{A_{1,r}}^{ER}\paren{t}} dt}=\ev{0}{\int_0^{\tau_{r\D}} \mathbf{1}_{V}\paren{B_t}}dt,
\end{equation*}
where $\tau_{A_{1,r}}$ and $\tau_{r\D}$ are respectively the first time $B_{A_{1,r}}^{ER}$ leaves $A_{1,r}$ and $B_t$ leaves $r\D$.  Furthermore, we have
\begin{equation*}
G_{A_{1,r}}^{ER}\paren{A_1,z}=\frac{-\log \abs{z}+\log r}{\pi}.
\end{equation*}
\end{lemma}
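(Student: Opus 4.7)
The plan is to reduce both assertions to a one-dimensional radial computation. The key inputs will be the rotational invariance of $B^{ER}_{\C\backslash\D}$ (visible from the semigroup formula of Proposition \ref{semigroupERBMThm}), Proposition \ref{radialpartERBM} identifying its radial part with that of reflected Brownian motion, and the explicit Brownian Green's function $G_{r\D}(0,z) = (\log r - \log\abs{z})/\pi$ on the disk.

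First I will exploit the rotational invariance of both the ERBM law started at $A_1$ and the Brownian motion law started at $0$ to replace $\mathbf{1}_V$ by its rotationally averaged version $\overline{\mathbf{1}_V}(\rho) = \ell_V(\rho)/(2\pi\rho)$, where $\ell_V(\rho)$ denotes the arc length of $V\cap\partial B_\rho(0)$. Averaging the expected occupation time over rotations of $V$ (Fubini plus rotation invariance of each law) reduces each side of the first identity to an expected integral of $\overline{\mathbf{1}_V}$ against the radial part of the corresponding process. By Proposition \ref{radialpartERBM} and the fact that killing at $\partial B_r(0)$ depends only on the radial coordinate, the radial part of $B^{ER}_{A_{1,r}}$ started at $A_1$ is a reflected Bessel process of dimension $2$ on $[1,r]$ (reflected at $1$, absorbed at $r$, starting at $1$), while the radial part of $B_t$ in $r\D$ from $0$ is Bessel$(2)$ on $[0,r]$ absorbed at $r$ and starting at $0$.

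The heart of the argument is then to verify that these two one-dimensional diffusions have the same Lebesgue-density Green's function on $[1,r]$. For Bessel$(2)$ from $0$, polar integration of $G_{r\D}(0,z) = (\log r - \log\abs{z})/\pi$ immediately yields $g_{\mathrm{Bes}}(0,y) = 2y(\log r - \log y)$ on $(0,r)$. For the reflected Bessel$(2)$ from $1$, I will solve the boundary value problem $\tfrac12 u'' + \tfrac{1}{2y} u' = -h$ on $(1,r)$ with Neumann condition $u'(1)=0$ (reflection) and Dirichlet condition $u(r)=0$ (absorption): rewriting as $(yu')' = -2yh$, integrating twice and swapping the order of integration produces $u(1) = \int_1^r 2y(\log r - \log y) h(y)\,dy$, so $g_{\mathrm{refl}}(1,y) = 2y(\log r - \log y)$ on $[1,r]$, matching $g_{\mathrm{Bes}}$. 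Since $V$ is bounded away from $A_1$, $\overline{\mathbf{1}_V}$ vanishes on $[0,1]$, and the two expected occupation times coincide, establishing the first identity.

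For the second assertion, combining the first identity with the explicit disk Green's function gives $\int_V G^{ER}_{A_{1,r}}(A_1,w)\,dw = \int_V (\log r - \log\abs{w})/\pi\,dw$ for every Borel $V$ bounded away from $A_1$; uniqueness of the Radon-Nikodym derivative then forces $G^{ER}_{A_{1,r}}(A_1,z) = (\log r - \log\abs{z})/\pi$ as a (continuous) version of the Green's function. The one technical step requiring care is the one-dimensional ODE with mixed Neumann/Dirichlet boundary conditions for the reflected Bessel$(2)$ Green's function; the remainder is bookkeeping that converts rotational invariance and Proposition \ref{radialpartERBM} into the radial identity.
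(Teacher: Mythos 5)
Your argument is correct, but it takes a genuinely different route from the paper. The paper's proof never touches the generator of the radial part: it fixes a circle $C_{\epsilon}=\set{\abs{z}=1+\epsilon}$ with $V$ in its exterior, decomposes both paths at the successive hitting times of the inner circle and of $C_{\epsilon}$, notes that no occupation time in $V$ accrues during the inner legs, and observes that at each hitting time of $C_{\epsilon}$ both processes are uniformly distributed on $C_{\epsilon}$ (for the ERBM this is property (3) of Definition \ref{characterizationERBM}, since excursion measure between concentric circles is rotationally symmetric; for Brownian motion from $0$ it is classical) and evolve as Brownian motions until the next return, so the expected occupation times agree leg by leg. You instead use rotational invariance to average $\mathbf{1}_V$ to a radial function and then match the one-dimensional Green's functions of the two radial diffusions; your ODE solve is correct and reproduces the scale/speed formula $2y\paren{\log r-\log y}$ on both sides. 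The paper's coupling is computation-free and leans on property (3) of the definition, while yours leans on property (4) and has the advantage of explaining directly why the answer is the radial function $\paren{\log r-\log\abs{z}}/\pi$. The one step you should make explicit is the identification of the radial part of $B^{ER}_{A_{1,r}}$ started at $A_1$ with the reflected Bessel(2) diffusion satisfying the Neumann condition at $\rho=1$: Proposition \ref{radialpartERBM} only identifies it with the radial part of reflected Brownian motion in $\C\backslash\D$, and one still needs to know what that is. Using the paper's construction of reflected Brownian motion as the image of $B_1+i\abs{B_2}$ under $z\mapsto e^{-iz}$, the radial part is $e^{\abs{B_2}}$ under the time change $dt=e^{2\abs{B_2}}\,ds$, so your occupation-time integral becomes $\ev{}{\int_0^{\sigma}h\paren{e^{Y_s}}e^{2Y_s}\,ds}$ with $Y=\abs{B_2}$ reflected at $0$ and killed at $\log r$; substituting $y=e^{x}$ into the elementary Green's function $2\paren{\log r-x}$ of reflected one-dimensional Brownian motion recovers $2y\paren{\log r-\log y}$ without invoking any Bessel-process theory, and closes that step.
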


\begin{proof}
Since $V$ is bounded away from $\overline{D}$, there exists an $\epsilon>0$ such that $V$ is contained in the region bounded by the circle
\[C_{\epsilon}=\set{z \in \C:\abs{z}=1+\epsilon} \]
and the outer boundary of $A_{1,r}$.  Let $\sigma_1=0$, $\tau_j$ be the first time after $\sigma_j$ that $B_{A_{1,r}}^{ER}$ hits $C_{\epsilon}$, and $\sigma_j$ for $j>1$ be the first time after $\tau_{j-1}$ that $B_{A_{1,r}}^{ER}$ hits $A_1$.  Similarly, let $\sigma_1'=0$, $\tau'_j$ be the first time after $\sigma'_j$ that $B_t$ hits $C_{\epsilon}$, and $\sigma'_j$ for $j>1$ be the first time after $\tau'_{j-1}$ that $B_t$ hits the circle of radius $1$, and $\sigma'_1=0$.  It follows from the strong Markov property for ERBM and (3) of Definition \ref{characterizationERBM} that given that $\tau_j<\infty$, the distribution of $B_{A_{1,r}}^{ER}\paren{\tau_j}$ is uniform on $C_{1+\epsilon}$.  It is an easy exercise to check that given that $\tau'_j<\infty$, the distribution of $B_{\tau_j}$ is uniform on $C_{1+\epsilon}$.  Using these two facts, the strong Markov property for ERBM, and the fact that an ERBM has the distribution of a Brownian motion up until the first time it hits the boundary of $A_{1,r}$, we see that
\begin{equation*}
\ev{B_{A_{1,r}}^{ER}\paren{\tau_j}}{\int_{\tau_j}^{\sigma_{j+1}} \mathbf{1}_{V}\paren{B_{A_{1,r}}^{ER}\paren{t}} dt}=\ev{B_{\tau'_j}}{\int_{\tau'_j}^{\sigma'_{j+1}} \mathbf{1}_{V}\paren{B_{r\D}\paren{t}} dt}.
\end{equation*}
Combined with the fact that
\begin{equation*}
\ev{B_{A_{1,r}}^{ER}\paren{\sigma_j}}{\int_{\sigma_j}^{\tau_j} \mathbf{1}_{V}\paren{B_{A_{1,r}}^{ER}\paren{t}} dt}=\ev{B_{\sigma'_j}}{\int_{\sigma'_j}^{\tau'_j} \mathbf{1}_{V}\paren{B_{r\D}\paren{t}} dt}=0,
\end{equation*}
the first result follows.

Using the first part of the proposition, we see that 
\[G_{A_{1,r}}^{ER}\paren{A_1,z}=G_{r\D}\paren{0,z}.\]
Combining this with \eqref{GFUniDisk}, the second part of the proposition follows.
\end{proof}

A quantity that will help us understand $G_D^{ER}\paren{z,\cdot}$ is the density for the amount of time ERBM started at $A_i$ spends in a set from the time it hits a curve $\eta_i$ surrounding $A_i$ until the next time it hits $\partial D$.  The next lemma establishes the existence and some properties of this density.

\begin{lemma} \label{GreenERBMOneTime}
For $i=1, \ldots, n$, let $\eta_i$ and $U_i$ be as in Definition \ref{characterizationERBM} and let $\tau$ be the first time \emph{after} $B_D^{ER}$ has hit one of the $\eta_i$'s that $B_D^{ER}$ hits $\partial D$.  The function
\begin{equation*}
T_i\paren{w}:=G^{ER}_{U_i}\paren{A_i,w}+\int_{\eta_i}G_D\paren{z,w} \frac{H_{\partial U_i}\paren{A_i,z}}{\mathcal{E}_{U_i}\paren{A_i, \eta}} \abs{dz},
\end{equation*}
where by convention we let $G^{ER}_{U_i}\paren{A_i,w}=0$ for $w \notin U_i$, has the following properties.
\begin{enumerate}
\item $T_i\paren{w}$ is a density for the expected amount of time ERBM started at $A_i$ spends in a set up until time $\tau$
\item $T_i\paren{w}$ is harmonic on $D \backslash \eta_i$
\item If $i\neq j$, then $\displaystyle \frac{1}{2}\int_{\eta_j}\frac{d}{dn}~ T_i\paren{w}   \abs{dw}=p_{i j}$, where $n$ is the outward-pointing normal and $p_{i j}$ is as in Section \ref{MarkovChainERBM}
\item If $\eta'_i$ is a smooth curve in the interior of $U_i$ that is homotopic to $\eta_i$, then
\[\frac{1}{2}\int_{\eta'_i}\frac{d}{dn}~ T_i\paren{w}   \abs{dw}=p_{ii}-1.\]
\end{enumerate}
\end{lemma}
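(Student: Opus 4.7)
The plan is to prove (1) via the strong Markov property and the definition of ERBM, (2) by inspecting each summand, and (3)--(4) via the divergence theorem together with Fubini and one explicit period computation on the annulus.

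For (1), let $\sigma$ be the first time $B_D^{ER}$ hits $\eta_i$. Starting at $A_i$, during $[0,\sigma]$ the process is ERBM in $U_i$ killed on $\eta_i$, contributing the first term (with the convention that $G_{U_i}^{ER}(A_i,w)=0$ off $U_i$). By Definition \ref{characterizationERBM}(3), the distribution of $B_D^{ER}(\sigma)$ on $\eta_i$ is $\mu_i(dz):=\frac{H_{\partial U_i}(A_i,z)}{\mathcal{E}_{U_i}(A_i,\eta_i)}\,|dz|$; by Definition \ref{characterizationERBM}(2), the process on $[\sigma,\tau]$ is Brownian motion in $D$ killed at $\partial D$, whose expected occupation in $V$ starting from $z$ is $\int_V G_D(z,w)\,dw$. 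Integrating against $\mu_i$ and invoking the strong Markov property produces the second term.

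For (2), the integral summand $\int_{\eta_i}G_D(z,w)\,\mu_i(dz)$ is harmonic on $D\setminus\eta_i$ because $G_D(z,\cdot)$ is harmonic off $\{z\}$. On $U_i$, the first summand is harmonic: via conformal invariance (Proposition \ref{ERBMGreenCI}) and Lemma \ref{GreensFunctionAnnulus}, it pulls back to $(\log r_i - \log|\cdot|)/\pi$ on $A_{1,r_i}$; off $\overline{U_i}$ it is identically zero. Hence $T_i$ is harmonic on $D\setminus\eta_i$.

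The engine for (3) and (4) is the identity
\[\tfrac{1}{2}\int_\gamma \tfrac{d}{dn} G_D(z,w)\,|dw|=h_k(z)\]
for any smooth Jordan curve $\gamma\subset D$ surrounding $A_k$ alone, with $z$ in the exterior of $\gamma$ and $n$ outward. This follows from the divergence theorem applied on the region bounded by $\gamma$ and $\partial A_k$ to the (there harmonic) function $w\mapsto G_D(z,w)$, combined with the standard boundary identity $\partial_{n_{\mathrm{in}}}G_D(z,w)=2H_D(z,w)$ on $\partial D$, whose integral over $\partial A_k$ equals $2h_k(z)$. For (3), when $i\neq j$ the curve $\eta_j$ is disjoint from $\overline{U_i}$, so the first summand of $T_i$ vanishes near $\eta_j$, and Fubini together with the displayed identity yields
\[\tfrac{1}{2}\int_{\eta_j}\tfrac{d}{dn}T_i(w)\,|dw|=\int_{\eta_i}h_j(z)\,\mu_i(dz)=p_{ij}.\]

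For (4), the integral summand contributes $p_{ii}$ by the same Fubini argument, since $\eta_i$ lies in the exterior of $\eta_i'$. The remaining flux $\tfrac{1}{2}\int_{\eta_i'}\partial_n G_{U_i}^{ER}(A_i,w)\,|dw|$ is the period of the harmonic conjugate of $G_{U_i}^{ER}(A_i,\cdot)$ around $\partial A_i$; it is conformally invariant, since for conformal $f$ the scaling $\partial_n (u\circ f) = |f'|\,(\partial_n u)\circ f$ cancels against $|d\ell|/|f'|$ in the line element, giving $\int_\gamma \partial_n u\,|dw|=\int_{f(\gamma)} \partial_n (u\circ f^{-1})\,|dw|$. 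Mapping $U_i\to A_{1,r_i}$ and using $G_{A_{1,r_i}}^{ER}(A_1,z)=(\log r_i-\log|z|)/\pi$, the period around $\{|z|=\rho\}$ equals $-\tfrac{1}{\pi\rho}\cdot 2\pi\rho=-2$, contributing $-1$. Summing gives $p_{ii}-1$. I expect the main obstacle to be exactly this period computation: one must recognize conformal invariance of the closed-curve flux despite non-invariance of pointwise normal derivatives, and then pin down the universal constant using the explicit annulus formula of Lemma \ref{GreensFunctionAnnulus}.
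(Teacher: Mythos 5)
Your proof is correct, and parts (1)--(3) follow essentially the paper's route: (1) by splitting at the first hitting time of $\eta_i$ and invoking the strong Markov property together with (2) and (3) of Definition \ref{characterizationERBM}; (2) by treating the two summands separately, with Lemma \ref{GreensFunctionAnnulus} and conformal invariance handling $G^{ER}_{U_i}\paren{A_i,\cdot}$; (3) via Fubini and the flux identity $\tfrac12\int_{\eta_j}\tfrac{d}{dn}G_D\paren{z,w}\abs{dw}=h_j\paren{z}$. (Two small differences there: the paper verifies harmonicity of the integral term via the spherical mean value property rather than differentiating under the integral, and it explicitly reduces the flux identity to the case of smooth $\partial A_j$ by a conformal change of variables --- a technicality you elide but should acknowledge, since $\partial A_j$ need not be smooth.) Where you genuinely diverge is (4). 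The paper keeps the entire contribution inside the $G_D$-integral: it derives $\int_{\eta'_i}\tfrac{d}{dn}G_D\paren{z,w}\abs{dw}=2\paren{h_i\paren{z}-1}$ by peeling off the logarithmic singularity of $G_D\paren{z,\cdot}$ and integrates that against the hitting density, and its displayed computation of $\int_{\eta'_i}\tfrac{d}{dn}T_i$ does not separately account for the $G^{ER}_{U_i}$ summand. You instead apply the unmodified identity to the integral term (legitimate, since every $z\in\eta_i$ lies in the exterior of $\eta'_i$), obtaining $p_{ii}$, and extract the remaining $-1$ as the period of $G^{ER}_{U_i}\paren{A_i,\cdot}$ around $A_i$, computed from the conformal invariance of the closed-curve flux and the explicit annulus formula. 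Your bookkeeping is the more transparent of the two: it makes visible exactly where the defect $-1$ comes from (the unit of initial mass at $A_i$ carried by the first summand), whereas the paper's singularity identity is stated for $z$ interior to $\eta'_i$ while the points being integrated lie on $\eta_i$, exterior to it, so that the reader must observe that the omission of the first summand and the extra $-2$ from the singularity cancel because the hitting density has total mass one. Both accountings agree with the check that $\tfrac12\int_{\gamma}\tfrac{d}{dn}T_i$ equals terminal mass minus initial mass inside $\gamma$.
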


\begin{proof}
It is clear using the strong Markov property for ERBM, the fact that ERBM has the distribution of a Brownian motion up until the first time it hits $\partial D$, and (3) of Definition \ref{characterizationERBM} that the first statement holds.

Denote the second summand in the definition of $T_i\paren{w}$ by $S_i\paren{w}$.  If $w \notin \eta_i$ and $\epsilon$ is small enough such that $B\paren{w,\epsilon}$ does not intersect $\eta_i$, then  using Fubini's theorem and the fact that $G_D\paren{z,\cdot}$ is harmonic, we have
\begin{align*}
\frac{1}{2\pi}\int_0^{2\pi} S_i\paren{w+\epsilon e^{i\theta}} d\theta &= \frac{1}{2\pi}\int_0^{2\pi}\bracket{\int_{\eta_i}G_D\paren{z,w+\epsilon e^{i\theta}} \frac{H_{\partial U_i}\paren{A_i,z}}{\mathcal{E}_{U_i}\paren{A_i, \eta}} \abs{dz}} d\theta\\
&=\int_{\eta_i} \frac{H_{\partial U_i}\paren{A_i,z}}{\mathcal{E}_{U_i}\paren{A_i, \eta}}\bracket{ \frac{1}{2\pi}\int_0^{2\pi} G_D\paren{z,w+\epsilon e^{i\theta}} d\theta} \abs{dz}\\
&=\int_{\eta_i} \frac{H_{\partial U_i}\paren{A_i,z}}{\mathcal{E}_{U_i}\paren{A_i, \eta}}G_D\paren{z,w} \abs{dz}\\
&=S_i\paren{w}.
\end{align*}
This shows that $S_i\paren{w}$ satisfies the spherical mean value property at $w$ and, thus, is harmonic on $D \backslash \eta_i$.  It follows that to finish the proof of the second statement, we just have to show that $G^{ER}_{U_i}\paren{A_i,\cdot}$ is harmonic away from $\eta_i$.  Let $f_i:A_{1,r_i} \rightarrow U_i$ be a conformal map mapping the outer boundary of $A_{1,r_i}$ to the outer boundary of $U_i$.  Using Proposition \ref{ERBMGreenCI} and Lemma \ref{GreensFunctionAnnulus}, we see that
\begin{equation}\label{GreenERBMAnnulus}
G^{ER}_{U_i}\paren{A_i,w}=G^{ER}_{D_{r_i}}\paren{A_1,f_i\paren{w}}=\frac{-\log \abs{f_i\paren{w}} + \log r_i}{\pi}.
\end{equation}
Since $\log\abs{z}$ is harmonic and precomposing a harmonic function with a conformal map yields a harmonic function, $G^{ER}_{U_i}\paren{A_i,\cdot}$ is harmonic away from $\eta_i$.

The proof of the third statement uses the fact that if $z$ is in the exterior of $\eta_j$, then
\begin{equation}\label{GreenNormalPoisson1}
\int_{\eta_j}\frac{d}{dn}G_D\paren{z,w} \abs{dw}=2 h_j\paren{z}.
\end{equation}
In the case that $\partial A_j$ is a smooth Jordan curve, this is true because the normal derivative of $G_D\paren{z,w}$ is $2 H_D\paren{z,w}$ on $\partial A_j$ and the integral of the normal derivative of a harmonic function is the same over any two homotopic curves.  If the boundary of $A_j$ is not a smooth Jordan curve, we can map $D$ conformally to a region where the image of $\partial A_j$ is a smooth Jordan curve \cite{MR1344449} and use the conformal invariance of the Green's function, the change of variables formula, the fact that conformal maps preserve angles and the result in the case that $\partial A_j$ is a smooth Jordan curve.  If $i\neq j$, using Fubini's theorem, the dominated convergence theorem, and \eqref{GreenNormalPoisson1}, we have
\begin{align*}
\int_{\eta_j}\frac{d}{dn}~T_i\paren{w} \abs{dw} &= \int_{\eta_j}\frac{d}{dn} \int_{\eta_i}G_D\paren{z,w} \frac{H_{\partial U_i}\paren{A_i,z}}{\mathcal{E}_{U_i}\paren{A_i, \eta}} \abs{dz} \abs{dw}\\
&= \int_{\eta_i}\frac{H_{\partial U_i}\paren{A_i,z}}{\mathcal{E}_{U_i}\paren{A_i, \eta}} \int_{\eta_j}\frac{d}{dn}~G_D\paren{z,w} \abs{dw} \abs{dz}\\
&= 2 \int_{\eta_i}\frac{H_{\partial U_i}\paren{A_i,z}}{\mathcal{E}_{U_i}\paren{A_i, \eta}}h_j\paren{z} \abs{dz}\\
&= 2 p_{i j}
\end{align*}

The proof of the fourth statement is similar to the proof of the third statement and will rely on calculating
$\displaystyle\int_{\eta'_i}\frac{d}{dn}G_D\paren{z,w} \abs{dw}.$  If $z$ is a point in the interior of $\eta'_i$ and $\eta^{''}_i$ is a smooth Jordan curve in the interior of $\eta'_i$ such that $z$ is in the exterior of $\eta^{''}_i$, then by setting up the appropriate contour integral and using Green's theorem, it is not hard to see that (with the normals appropriately oriented)
\[\int_{\eta'_i}\frac{d}{dn}~G_D\paren{z,w} \abs{dw}= \int_{\eta^{''}_i} \frac{d}{dn}~G_D\paren{z,w} \abs{dw} + \int_{B_{\epsilon}\paren{z}} \frac{d}{dn} G_D\paren{z,w} \abs{dw}.\]
Using \eqref{GreenNormalPoisson1} and the fact that $G_D\paren{z,w}=-\frac{\log \abs{z-w}}{\pi}+g_z\paren{w}$, where $g_z$ is harmonic on $D$, we have
\begin{align*}
\int_{\eta'_i}\frac{d}{dn}~G_D\paren{z,w} \abs{dw}&= \int_{\eta^{''}_i} \frac{d}{dn}~G_D\paren{z,w} \abs{dw}+\int_{B_{\epsilon}\paren{z}} \frac{d}{dn}~G_D\paren{z,w} \abs{dw}\\
&=2 h_i\paren{z}-\int_{B\paren{z,\epsilon}} \frac{d}{dn}~\frac{\log \abs{z-w}}{\pi} \abs{dw}\\
&=2 \paren{h_i\paren{z}-1}.
\end{align*}
Using this and arguing as in the proof of the third statement, we have
\begin{align*}
\int_{\eta'_i}\frac{d}{dn}~T_i\paren{w} &= \int_{\eta'_i}\frac{d}{dn} \int_{\eta_i}G_D\paren{z,w} \frac{H_{\partial U_i}\paren{A_i,z}}{\mathcal{E}_{U_i}\paren{A_i, \eta}} \abs{dz} \abs{dw}\\
&= \int_{\eta_i}\frac{H_{\partial U_i}\paren{A_i,z}}{\mathcal{E}_{U_i}\paren{A_i, \eta}} \int_{\eta'_i}\frac{d}{dn}~G_D\paren{z,w} \abs{dw} \abs{dz}\\
&= 2 \int_{\eta_i}\frac{H_{\partial U_i}\paren{A_i,z}}{\mathcal{E}_{U_i}\paren{A_i, \eta}}\paren{h_i\paren{z}-1} \abs{dz}\\
&= 2 \paren{p_{ii}-1}.
\end{align*}
\end{proof}

We have all of the tools necessary to prove that $G_{D}^{ER}\paren{z,\cdot}$ is ER-harmonic.  In what follows, we continue to use the set up of the previous lemma.

\begin{proposition} \label{ERBMGreenERHarmonic}
There are versions of $G_{D}^{ER}\paren{\cdot,z}$ and $G_{D}^{ER}\paren{z,\cdot}$ that are ER-harmonic on $D-\set{z}$.
\end{proposition}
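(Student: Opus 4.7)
The plan is to take the candidate function $\tilde G(z,w) := G_D(z,w) + \sum_{i=1}^n h_i(z)\, G_D^{ER}(A_i,w)$, which by Proposition \ref{ERBMGreenDecomp} is a version of $G_D^{ER}(z,w)$, and to verify ER-harmonicity in each variable separately. Continuity of $\tilde G(\cdot,w)$ on $E$ with $\tilde G(A_k,w) = G_D^{ER}(A_k,w)$ is immediate from $h_i(z) \to \delta_{ik}$ and $G_D(z,w) \to 0$ as $z \to A_k$, and harmonicity in each variable on the appropriate domain follows from harmonicity of each summand.

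For the first variable, I would obtain the ER mean-value property at each $A_k$ (with $w \neq A_k$) probabilistically. Taking $\eta_k$ small enough that $w \notin \overline{U_k}$ and applying the strong Markov property of $B_D^{ER}$ at the hitting time $\tau_{\eta_k}$: property (3) of Definition \ref{characterizationERBM} ensures $B_D^{ER}(\tau_{\eta_k})$ started at $A_k$ has distribution $H_{\partial U_k}(A_k,\cdot)/\mathcal{E}_{U_k}(A_k,\eta_k)$, while $B_D^{ER}$ cannot visit a neighborhood of $w$ before $\tau_{\eta_k}$. Dividing the expected occupation time in a shrinking ball around $w$ by its Lebesgue measure yields
\[G_D^{ER}(A_k,w) = \int_{\eta_k} G_D^{ER}(z',w) \frac{H_{\partial U_k}(A_k,z')}{\mathcal{E}_{U_k}(A_k,\eta_k)} |dz'|,\]
which is precisely the ER-MVP at $A_k$. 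The case $w = A_i$ follows by taking $w \to A_i$ along $D$ in this identity and invoking dominated convergence.

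For the second variable, I would instead use Lemma \ref{ERHarmonicCondition}: it suffices to exhibit, for each $j$, a curve $\eta'_j \subset U_j$ surrounding $A_j$ and avoiding $z$ such that $\int_{\eta'_j} \frac{d}{dn_w} \tilde G(z,w)\,|dw| = 0$. The $G_D$-summand contributes $2 h_j(z)$ by \eqref{GreenNormalPoisson1}. For the $G_D^{ER}(A_i,\cdot)$ summands, the strong Markov property at the stopping time $\tau$ of Lemma \ref{GreenERBMOneTime} gives the recursion $G_D^{ER}(A_i,w) = T_i(w) + \sum_{k=1}^n p_{ik}\, G_D^{ER}(A_k,w)$. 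Setting $I_{i,j} := \int_{\eta'_j} \frac{d}{dn_w} G_D^{ER}(A_i,w)\,|dw|$ and invoking Lemma \ref{GreenERBMOneTime}(4) for the $k=j$ term, together with (3) and homotopy invariance of the flux (valid since $T_k$ is harmonic on $D \setminus \eta_k$, so the integral over $\eta_j$ equals the integral over $\eta'_j$ when $k \neq j$), yields the linear system
\[(I-P)\,\vec I_{\cdot,j} = 2 P e_j - 2 e_j = -2(I-P)e_j.\]
Invertibility of $I-P$ from \eqref{GreenMatrixFormula} forces $I_{i,j} = -2\delta_{ij}$, so $\sum_i h_i(z) I_{i,j} = -2 h_j(z)$ exactly cancels the $G_D$-contribution.

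The main obstacle I expect is the careful bookkeeping in the second part: extracting the recursion $G_D^{ER}(A_i,w) = T_i(w) + \sum_k p_{ik}\, G_D^{ER}(A_k,w)$ cleanly from the strong Markov property (with the right identification of $T_i$ as the one-round occupation density from Lemma \ref{GreenERBMOneTime}), and verifying that the single curve $\eta'_j$ may be used uniformly across all summands of the linear system via the homotopy argument for $k \neq j$ and Lemma \ref{GreenERBMOneTime}(4) specifically for $k = j$.
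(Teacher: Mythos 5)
Your computation of the fluxes is essentially the paper's own argument in disguise: you use the same decomposition from Proposition \ref{ERBMGreenDecomp}, the same one-round densities $T_i$ and flux identities (3), (4) of Lemma \ref{GreenERBMOneTime}, the same homotopy observation, and Lemma \ref{ERHarmonicCondition} to conclude. Indeed, writing $\mathbf{P}=\paren{p_{ij}}_{1\leq i,j\leq n}$, one has $\mathbf{D}\paren{\mathbf{I}-\mathbf{P}}=\mathbf{I}-\mathbf{Q}$, so solving your system $\paren{\mathbf{I}-\mathbf{P}}\vec{I}_{\cdot,j}=-2\paren{\mathbf{I}-\mathbf{P}}e_j$ is algebraically the same as substituting the fluxes of the $T_k$ into the paper's explicit representation $G_D^{ER}\paren{A_\cdot,\cdot}=\paren{\mathbf{I}-\mathbf{Q}}^{-1}\mathbf{D}\mathbf{T}$. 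Your probabilistic verification of the mean value property in the first variable is also fine and is what the paper means by ``an argument similar to the one for $H_D^{ER}\paren{\cdot,z}$.''

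There is, however, a genuine gap: you treat $G_D^{ER}\paren{A_i,\cdot}$ throughout as an already-given harmonic function --- you integrate its normal derivative over $\eta_j'$, and you assert that harmonicity of $\tilde{G}$ ``follows from harmonicity of each summand.'' But $G_D^{ER}\paren{A_i,\cdot}$ is defined only as a Radon--Nikodym derivative, i.e.\ as an element of $L^1$, and a priori it could even be infinite on a set of positive measure (the paper flags exactly this after \eqref{RNGreenERBM}). Producing a finite version that is harmonic on $D$ and continuous on $E$ is the main content of the proposition, and it is precisely what both your recursion (stated pointwise) and your flux integrals $I_{i,j}$ presuppose. The paper supplies it constructively: the geometric series $\paren{\mathbf{I}-\mathbf{Q}}^{-1}\mathbf{D}\mathbf{T}$ is a version of $G_D^{ER}\paren{A_i,\cdot}$ that, by Lemma \ref{GreenERBMOneTime}(2), is harmonic off $\bigcup_k \eta_k$; repeating the construction with a disjoint family of curves and using that any two versions agree a.e.\ then yields a version harmonic everywhere, which extends continuously to $E$ via \eqref{GreenERBMAnnulus}. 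The repair of your argument is short --- invert $\mathbf{I}-\mathbf{P}$ to \emph{solve} the recursion as $G_D^{ER}\paren{A_\cdot,w}=\paren{\mathbf{I}-\mathbf{P}}^{-1}\mathbf{T}\paren{w}$ rather than merely manipulating it, then carry out the patching step --- but as written the existence, finiteness, and regularity of the object you are differentiating is assumed rather than proved.
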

\begin{proof}
Using Proposition \ref{ERBMGreenDecomp}, in order to show that there is a harmonic version of $G_{D}^{ER}\paren{z,\cdot}$, it is enough to show that there is a harmonic version of $G^{ER}_D\paren{A_i,\cdot}$ for each $1 \leq i \leq n$.  Let $\mathbf{T}$ be the vector function with $i$th component $T_i\paren{w}$ and let $\mathbf{D}$ be the diagonal matrix with $ii$ entry $\frac{1}{1-p_{ii}}$.  Using the strong Markov property for ERBM and Lemma \ref{GreenERBMOneTime}, we see that $\mathbf{D}\mathbf{T}$ is the vector function whose $i$th component is the density for the expected amount of time ERBM started at $A_i$ spends in a set up until the first time it hits an $A_j$ with $j\neq i$.  Using \eqref{GreenMatrixFormula} and the strong Markov property for ERBM, we see that the $i$th component of
\begin{equation} \label{GreenMatrixGreenFunction}
\mathbf{D} \mathbf{T}+\mathbf{Q} \mathbf{D} \mathbf{T}+\mathbf{Q}^2 \mathbf{D} \mathbf{T}+ \ldots  = \paren{\mathbf{I}-\mathbf{Q}}^{-1} \mathbf{D} \mathbf{T}
\end{equation}
is a version of $G^{ER}_D\paren{A_i,\cdot}$.  Since $T_i\paren{\cdot}$ is harmonic away from each $\eta_i$, it follows that there is a version of $G_D^{ER}\paren{A_i,\cdot}$ that is harmonic away from each $\eta_i$.  By choosing different $\eta_i$'s and repeating this procedure, we can get a version of $G_D^{ER}\paren{A_i,\cdot}$ that is harmonic away from a sequence of Jordan curves $\eta_1',\ldots, \eta_n'$ which are disjoint from each $\eta_i$.  Finally, since any two versions of $G_D^{ER}\paren{A_i,\cdot}$ are equal almost everywhere, we can find a version of $G_D^{ER}\paren{A_i,\cdot}$ that is harmonic everywhere.

Using (3) and (4) of Lemma \ref{GreenERBMOneTime} and the fact that the $i$th component of \eqref{GreenMatrixGreenFunction} is a version of $G_D^{ER}\paren{A_i,\cdot}$, we see that 
\begin{equation}\label{GreenERBMNormalInt} \int_{\eta_j} \frac{d}{dn}~G^{ER}_D\paren{A_i,w}\abs{dw}= \begin{cases}
0 & \text{ if }  j\neq i \\
-2 &\text{ if }  j=i
\end{cases}.
\end{equation}
It is easy to see using its definition and \eqref{GreenERBMAnnulus} that each $T_i\paren{\cdot}$, and thus each $G_D^{ER}\paren{A_i,\cdot}$, can be extended to a continuous function on $E$.  Combining this with Lemma \ref{ERHarmonicCondition}, we have that that there is a version of $G_D^{ER}\paren{A_i,\cdot}$ that is ER-harmonic on $D-\set{A_i}$.  Finally, using \eqref{GreenNormalPoisson1}, \eqref{GreenERBMNormalInt}, and Lemma \ref{ERHarmonicCondition}, it follows that the version of $G_D^{ER}\paren{z,\cdot}$ defined in Proposition \ref{ERBMGreenDecomp} is ER-harmonic on $D-\set{z}$.

An argument similar to the one showing that $H_D^{ER}\paren{\cdot,z}$ is ER-harmonic shows that $G_D^{ER}\paren{\cdot,z}$ is ER-harmonic.
\end{proof}

We can now give an analytic characterization of $G_D^{ER}\paren{z,\cdot}$.

\begin{proposition}\label{ERBMGreenUnique}
If $z \in D$, then $G_D^{ER}\paren{z,\cdot}$ is the unique ER-harmonic function on $D-\set{z}$ satisfying
\begin{itemize}
\item  $G_D^{ER}\paren{z,w}=-\frac{\log \abs{z-w}}{\pi}+O\paren{1}$ as $w \rightarrow z$
\item  $G_D^{ER}\paren{z,w} \rightarrow 0$ as $w \rightarrow w'$ for any $w' \in \partial A_0$.
\end{itemize}
Furthermore, $G_D^{ER}\paren{A_i,\cdot}$ is the unique ER-harmonic function on $D-\set{A_i}$ that is equal to $G_D^{ER}\paren{A_i,A_i}$ on $\partial A_i$ and $0$ on $\partial A_0$.

\end{proposition}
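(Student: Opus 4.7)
The plan is to extract the stated boundary behavior from the decomposition provided by Proposition \ref{ERBMGreenDecomp}, and then deduce uniqueness from the maximum principle of Lemma \ref{ERBMMaximalprinciple}.

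For $z \in D$, the identity
\[
G_D^{ER}\paren{z,w} = G_D\paren{z,w} + \sum_{i=1}^n h_i\paren{z}\, G_D^{ER}\paren{A_i,w}
\]
yields both asymptotics immediately: the logarithmic singularity at $w=z$ is carried entirely by $G_D\paren{z,w}$ via \eqref{GreenAsztow}, while each $G_D^{ER}\paren{A_i,\cdot}$ is ER-harmonic (hence locally bounded) near $z$ and contributes only $O\paren{1}$. Vanishing on $\partial A_0$ follows from the classical property $G_D\paren{z,\cdot}\to 0$ at $\partial D$ together with the fact that $G_D^{ER}\paren{A_i,\cdot}$ itself vanishes on $\partial A_0$; I would read the latter off the representation $\paren{\mathbf{I}-\mathbf{Q}}^{-1}\mathbf{D}\mathbf{T}$ from the proof of Proposition \ref{ERBMGreenERHarmonic}, since the summand $G^{ER}_{U_j}\paren{A_j,\cdot}$ in $T_j$ is zero off $U_j$ by convention and the integrand $G_D\paren{z,w}$ in the second summand tends to zero as $w\to\partial A_0$.

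For uniqueness, let $f$ be another ER-harmonic function with the stated properties and set $g = f - G_D^{ER}\paren{z,\cdot}$. The logarithmic singularities cancel up to $O\paren{1}$, so $g$ is bounded near $z$, and the classical removable singularity theorem extends $g$ to a harmonic function across $z$. The extension is still ER-harmonic on all of $E$ because the mean value property at each $A_j$ is preserved under subtraction (both $f$ and $G_D^{ER}\paren{z,\cdot}$ are ER-harmonic at $A_j$ when $z \in D$), bounded and continuous on $E \cup \partial A_0$, and zero on $\partial A_0$. Lemma \ref{ERBMMaximalprinciple} applied to $g$ and $-g$ then gives $g \equiv 0$.

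The second assertion follows the same pattern. ER-harmonicity of $G_D^{ER}\paren{A_i,\cdot}$ on $D-\set{A_i}$ is immediate from Proposition \ref{ERBMGreenERHarmonic}, the limit $G_D^{ER}\paren{A_i,w}\to G_D^{ER}\paren{A_i,A_i}$ as $w\to\partial A_i$ is continuity on $E$, and vanishing on $\partial A_0$ was just established. Given another candidate $f$, I would again set $g = f - G_D^{ER}\paren{A_i,\cdot}$, obtaining a function which is ER-harmonic on $D-\set{A_i}$, continuous and bounded on $E \cup \partial A_0$, and vanishes both on $\partial A_0$ and at the point $A_i$. The main obstacle is that Lemma \ref{ERBMMaximalprinciple} does not apply verbatim, since $g$ need not satisfy the mean value property at $A_i$; however, the proof of the lemma still runs. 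Any positive maximum of $g$ would have to be attained either at an interior point of $D$ (forcing $g$ to be constant by the strong maximum principle for harmonic functions) or at some $A_j$ with $j\neq 0$, where the ER mean value property (which holds whenever $j\neq i$) relocates the maximum back into $D$. The point $A_i$ itself cannot host a positive maximum because $g\paren{A_i}=0$. Since $g$ also vanishes on $\partial A_0$, constancy is incompatible with a positive maximum; the same argument applied to $-g$ rules out a negative minimum, so $g\equiv 0$.
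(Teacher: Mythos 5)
Your proposal is correct and follows essentially the same route as the paper: the asymptotics are read off the decomposition of Proposition \ref{ERBMGreenDecomp} together with \eqref{GreenAsztow}, uniqueness for $z\in D$ is the classical subtract-and-apply-the-maximum-principle argument that the paper cites, and the second statement is handled by exactly the extension of the maximum-principle argument behind Proposition \ref{ERHarmonicFromERBM} (treating $A_i$ as an extra boundary point) that the paper alludes to. You have simply filled in the details the paper leaves to references.
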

\begin{proof}
If $z \in D$, the asymptotics for $G_D^{ER}\paren{z,\cdot}$ at the boundary are clear and the asymptotic at $z$ follows from Proposition \ref{ERBMGreenDecomp} and the corresponding result for $G_D\paren{z,\cdot}$.  The uniqueness follows from a proof similar to the corresponding result for $G_D\paren{z,\cdot}$ (see \cite{MR2129588}, pg. 54).  The second statement follows from an extension of Proposition \ref{ERHarmonicFromERBM}.
\end{proof}

In what follows, when we write $G_D^{ER}\paren{z,\cdot}$ or $G_D^{ER}\paren{\cdot,w}$ we will mean a version that is ER-harmonic.

\begin{corollary}\label{GreenSym}
$G_D^{ER}\paren{z,w}=G_D^{ER}\paren{w,z}$ for all $z, w \in E$.
\end{corollary}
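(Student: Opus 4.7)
The plan is to leverage the uniqueness clause of Proposition \ref{ERBMGreenUnique} for $G_D^{ER}\paren{z,\cdot}$ together with the fact, established in Proposition \ref{ERBMGreenERHarmonic}, that $G_D^{ER}\paren{\cdot,z}$ is also ER-harmonic on $D-\set{z}$. First I would dispose of the case $z,w\in D$ with $z\neq w$. Fixing $z\in D$, I would verify that $u\mapsto G_D^{ER}\paren{u,z}$ satisfies the two boundary conditions singled out in Proposition \ref{ERBMGreenUnique}. Proposition \ref{ERBMGreenDecomp}, applied with $u$ in the role of the starting point, yields
\begin{equation*}
G_D^{ER}\paren{u,z}=G_D\paren{u,z}+\sum_{i=1}^{n}h_i\paren{u}\,G_D^{ER}\paren{A_i,z},
\end{equation*}
where the numbers $G_D^{ER}\paren{A_i,z}$ are constants once $z$ is fixed. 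The symmetry and asymptotic $G_D\paren{u,z}=-\log\abs{u-z}/\pi+O\paren{1}$ of the Brownian Green's function, combined with the local boundedness of each $h_i$ near $z$, gives the required logarithmic singularity at $z$. As $u\to w'\in \partial A_0$, $G_D\paren{u,z}\to 0$ (again by symmetry, since $G_D\paren{z,\cdot}$ vanishes on $\partial D$), and each $h_i\paren{u}\to 0$ because $h_i=0$ on $\partial A_0$ for $i\neq 0$; hence $G_D^{ER}\paren{u,z}\to 0$. The uniqueness clause in Proposition \ref{ERBMGreenUnique} then forces $G_D^{ER}\paren{u,z}=G_D^{ER}\paren{z,u}$ for all $u\in D-\set{z}$.

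Next I would promote the identity to the full space $E\times E$. Since $G_D^{ER}\paren{z,\cdot}$ and $G_D^{ER}\paren{\cdot,z}$ both admit continuous ER-harmonic extensions to $E$, for $z\in D$ and each boundary component $A_j$ I can take the limit $u\to A_j$ in the identity on $\paren{D-\set{z}}\times\paren{D-\set{z}}$ to conclude $G_D^{ER}\paren{z,A_j}=G_D^{ER}\paren{A_j,z}$. One more limit in the remaining variable handles the case where both arguments are among the $A_i$.

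The principal obstacle is justifying the appeal to the uniqueness portion of Proposition \ref{ERBMGreenUnique}, which is ultimately a manifestation of the maximal principle for ER-harmonic functions (Lemma \ref{ERBMMaximalprinciple}): the difference of two candidates with matching log singularities at $z$ extends continuously (and boundedly) across $z$, is ER-harmonic on all of $D$, and vanishes on $\partial A_0$, so must vanish identically. A secondary point to verify is that the probabilistic value $G_D^{ER}\paren{A_i,w}$ agrees with the boundary value at $A_i$ of the ER-harmonic extension of $G_D^{ER}\paren{\cdot,w}$; this is a consequence of the Feller property of ERBM together with continuity of each $h_i$ up to $A_i$, and is effectively already built into the discussion surrounding Proposition \ref{ERBMGreenERHarmonic}.
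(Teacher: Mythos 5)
Your proposal is correct and follows the same route as the paper: the paper's proof is the single sentence that $G_D^{ER}\paren{\cdot,z}$ satisfies the conditions of Proposition \ref{ERBMGreenUnique} and hence coincides with $G_D^{ER}\paren{z,\cdot}$. You have simply spelled out the verification of those conditions (via Proposition \ref{ERBMGreenDecomp}, the symmetry of $G_D$, and the vanishing of the $h_i$ on $\partial A_0$) and the extension to the boundary components, all of which the paper leaves implicit.
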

\begin{proof}
$G_D^{ER}\paren{\cdot,z}$ satisfies the conditions of Proposition \ref{ERBMGreenUnique} and thus, is the same function as $G_D^{ER}\paren{z,\cdot}$.
\end{proof}

\subsection{Conformal Mapping Using $H_D^{ER}(\cdot,w)$} \label{sectPKconMap}
%
%
%
%

\label{chapPKconMap}
Recall that a domain is called a chordal standard domain if it obtained by removing a finite number of horizontal line segments from the upper half-plane.  It is a classical theorem of complex analysis \cite{MR510197} that every $D \in \mathcal{Y}_n$ is conformally equivalent to a chordal standard domain.  Furthermore, this equivalence is unique up to a scaling and real translation.  This section is devoted to using $H_D^{ER}\paren{\cdot,w}$ to give a new proof of this fact.  Our proof is based on the sketch of a proof given in \cite{MR2247843}.  In what follows, we assume that $\partial A_0$ is locally analytic at $w \in \partial A_0$.  

There is an analytic characterization of $H_D^{ER}\paren{\cdot, w}.$ 

\begin{proposition} \label{ERBMPoissonKernelCharacterization}
$H_D^{ER}\paren{\cdot,w}$ is up to a real constant multiple the unique positive ER-harmonic function that satisfies $H_D^{ER}\paren{z,w} \rightarrow 0$ as $z \rightarrow w'$ for any $w' \in \partial A_0$ not equal to $w$.
\end{proposition}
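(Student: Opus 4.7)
The plan is to prove the proposition in two stages: first verify that $H_D^{ER}(\cdot,w)$ satisfies the listed properties, then establish uniqueness up to a multiplicative constant by reducing to the analogous Proposition \ref{noSubLinHar} for the ordinary Brownian Poisson kernel.

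For the first stage, positivity is immediate from \eqref{explicitPoisson}, while the boundary limit $H_D^{ER}(z,w)\to 0$ as $z\to w'\in\partial A_0\setminus\{w\}$ follows from the decomposition \eqref{ERBMPKD}: the ordinary Brownian Poisson kernel $H_D(z,w)$ tends to $0$ at such a locally analytic point, and each $h_i(z)\to 0$ as $z\to\partial A_0$. ER-harmonicity follows by writing $H_D^{ER}(z,w)$ as a pointwise limit of $\hm_D^{ER}(z,I_\epsilon)/\int_{I_\epsilon}\abs{d\gamma}$ for shrinking analytic arcs $I_\epsilon$ around $w$; each $\hm_D^{ER}(\cdot,I_\epsilon)$ is ER-harmonic by Proposition \ref{ERHarmonicFromERBM} combined with the strong Markov property of ERBM at the hitting time of $\eta_i$, and this mean value property \eqref{ERBMMeanvalue} is preserved in the limit.

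For uniqueness, let $u$ be a positive ER-harmonic function with the same boundary behavior, and consider
\[\phi(z)=u(z)-\sum_{i=1}^n u(A_i)h_i(z),\]
which is harmonic on $D$ and vanishes continuously on $\partial D\setminus\{w\}$: on each $\partial A_j$ with $j\geq 1$, $u$ takes the constant boundary value $u(A_j)$ by continuity on $E$, so it cancels $\sum u(A_j)h_j$ there, while on $\partial A_0\setminus\{w\}$ both terms vanish. The crucial step is to show $\phi\geq 0$. If $u$ is bounded in a neighborhood of $w$, then Schwarz reflection across the locally analytic arc $\partial A_0$ near $w$ and the removable singularity theorem extend $u$ continuously to $w$ with value $0$; the extended $u$ is bounded, continuous on $E\cup\partial A_0$, and ER-harmonic, so Lemma \ref{ERBMMaximalprinciple} forces $u\equiv 0$ and the proposition holds trivially with $c=0$. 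Otherwise $u(z)\to\infty$ as $z\to w$, so $\phi(z)\to\infty$ too since $\sum u(A_i)h_i$ is bounded by $\sum u(A_i)$; applying the minimum principle to $\phi$ on $D\setminus B_\epsilon(w)$ for small $\epsilon>0$ then yields $\phi\geq 0$ throughout $D$. Proposition \ref{noSubLinHar} applied to $\phi$ now gives $\phi=cH_D(\cdot,w)$ for some constant $c\geq 0$, and hence
\[u(z)=cH_D(z,w)+\sum_{i=1}^n u(A_i)h_i(z).\]

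To conclude $u=cH_D^{ER}(\cdot,w)$, I would substitute the above identity into the mean value property \eqref{ERBMMeanvalue} for $u$ at each $A_i$, obtaining a linear system $(I-P)\mathbf{u}=c\mathbf{v}$ for the vector $\mathbf{u}=(u(A_1),\ldots,u(A_n))^T$; here $P=(p_{ij})$ is the ERBM chain transition matrix from Section \ref{chapMC} restricted to $\{A_1,\ldots,A_n\}$, and $v_i$ is the same integral with $H_D(\cdot,w)$ in place of $u$. The same decomposition applied via \eqref{ERBMPKD} shows that $(H_D^{ER}(A_1,w),\ldots,H_D^{ER}(A_n,w))^T$ solves the same system with $c$ replaced by $1$. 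Invertibility of $I-P$ (any eigenvector $\mathbf{x}$ with eigenvalue $1$ would, through $z\mapsto\sum_{j=1}^n x_jh_j(z)$, produce a nonzero bounded ER-harmonic function vanishing on $\partial A_0$, contradicting Lemma \ref{ERBMMaximalprinciple}) then forces $u(A_i)=cH_D^{ER}(A_i,w)$, and combining with the previous display and \eqref{ERBMPKD} gives $u=cH_D^{ER}(\cdot,w)$. The main obstacle is establishing positivity of $\phi$, handled by the bounded/unbounded dichotomy at $w$ together with the ER-maximum principle.
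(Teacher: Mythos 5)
Your verification half matches the paper's: positivity and the boundary limits come from \eqref{explicitPoisson} and \eqref{ERBMPKD}, and ER-harmonicity comes from passing the mean value property \eqref{ERBMMeanvalue} through the limit defining $H_D^{ER}\paren{\cdot,w}$ from $\hm_D^{ER}\paren{\cdot,I_\epsilon}$. Your uniqueness argument also starts with the paper's decomposition $\phi=u-\sum_{i}u\paren{A_i}h_i$. The genuine gap is your proof that $\phi\geq 0$, which you need before invoking Proposition \ref{noSubLinHar}. Your dichotomy is ``either $u$ is bounded near $w$, or $u\paren{z}\to\infty$ as $z\to w$,'' and the second alternative is false: a positive harmonic function that is unbounded near a boundary point need not tend to $\infty$ there. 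The very function the proposition is about is a counterexample: $H_{\half}\paren{x+iy,0}=\frac{1}{\pi}\frac{y}{x^2+y^2}$ is unbounded near $0$ but satisfies $H_{\half}\paren{x+ix^2,0}\to\frac{1}{\pi}$ and tends to $0$ along more tangential curves, so by \eqref{PKCI} the same is true of $H_D\paren{\cdot,w}$ and hence of $H_D^{ER}\paren{\cdot,w}$ at $w$. Thus in the main case neither branch of your dichotomy applies, you cannot conclude $\phi\to\infty$, $\phi$ need not be nonnegative on $\partial B_\epsilon\paren{w}\cap D$ (it tends to $0$ at the two endpoints of that arc and has no a priori sign in between), and the minimum principle on $D\setminus B_\epsilon\paren{w}$ does not yield $\phi\geq 0$. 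A correct replacement: $\phi$ is bounded below by $-M$ with $M=\sum_i u\paren{A_i}$ and has boundary limit $0$ at every regular point of $\partial D$ other than $w$; setting $I_\epsilon=\partial A_0\cap B_\epsilon\paren{w}$, the subharmonic function $-\phi-M\hm_D\paren{\cdot,I_\epsilon}$ is bounded above with nonpositive $\limsup$ at every regular boundary point, so $-\phi\paren{z}\leq M\,\hm_D\paren{z,I_\epsilon}\to 0$ as $\epsilon\to 0$, giving $\phi\geq 0$. (The paper applies Proposition \ref{noSubLinHar} to $\phi$ without comment, so some such argument is needed in either write-up; but as written yours does not close the hole.)

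Your concluding step is correct but more elaborate than necessary. Once $\phi=cH_D\paren{\cdot,w}$, the identity \eqref{ERBMPKD} shows that $u-cH_D^{ER}\paren{\cdot,w}=\sum_i h_i\paren{u\paren{A_i}-cH_D^{ER}\paren{A_i,w}}$ is a bounded ER-harmonic function vanishing on $\partial A_0$, so Lemma \ref{ERBMMaximalprinciple} gives $u=cH_D^{ER}\paren{\cdot,w}$ immediately; this is the paper's finish. Your alternative, solving $\paren{\mathbf{I}-\mathbf{P}}\mathbf{u}=c\mathbf{v}$ and proving invertibility by turning a putative eigenvector into a bounded ER-harmonic function vanishing on $\partial A_0$, is sound (and the invertibility also follows from the spectral radius bound on $\mathbf{Q}$ in Section \ref{chapMC}), but it is a detour around the same maximum principle.
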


\begin{proof}
Using \eqref{ERBMPKD}, we see that $H_D^{ER}\paren{\cdot,w}$ is harmonic on $D$.  If $V$ is a Borel subset of $\partial A_0$, then it follows from the strong Markov property for ERBM and (3) of Definition \ref{characterizationERBM} that $\hm_D^{ER}\paren{\cdot,V}$ is ER-harmonic.  As a result, if $\gamma$ is as in \eqref{explicitPoisson} and $\eta$ and $U_i$ are as in Definition \ref{ERHarmonicDef}, then
\begin{align*}
H_D^{ER}\paren{A_i,w} &= \lim_{\epsilon \rightarrow 0} \frac{\hm_D^{ER}\paren{A_i,\gamma\paren{-\epsilon,\epsilon}}} {\int_{-\epsilon}^{\epsilon}\abs{\gamma'\paren{x}}~dx}\\
&=  \lim_{\epsilon \rightarrow 0} \int_{\eta} \frac{\hm_D^{ER}\paren{z,\gamma\paren{-\epsilon,\epsilon}}}{\int_{-\epsilon}^{\epsilon}\abs{\gamma'\paren{x}}~dx} \cdot \frac{H_{\partial U_i}\paren{A_i,z}}{\mathcal{E}_{U_i}\paren{A_i,\eta}} ~\abs{dz}\\
&=\int_{\eta_i} H_D^{ER}\paren{z,w} \cdot \frac{H_{\partial U_i}\paren{A_i,z}}{\mathcal{E}_{U_i}\paren{A_i,\eta}} ~\abs{dz},
\end{align*}  
where the last equality follows from the Harnack inequality and dominated convergence.  This proves that $H_D^{ER}\paren{\cdot,w}$ is ER-harmonic.  It is clear from \eqref{ERBMPKD} and Proposition \ref{noSubLinHar} that $H_D^{ER}\paren{\cdot,w}$ has the required asymptotics at $\partial A_0$.

Suppose $f$ is another positive ER-harmonic function that satisfies $f\paren{z} \rightarrow 0$ as $z \rightarrow w'$ for any $w' \in \partial A_0$ not equal to $w$.  The function
\[g\paren{z}:=f\paren{z}-\sum_{i=1}^n h_i\paren{z}f\paren{A_i}\]
is a harmonic function with $g\paren{A_i}=0$ for each $1 \leq i \leq n$ that has the same boundary conditions as $f$ at $ \partial A_0$.  It follows from Proposition \ref{noSubLinHar} that there is a $c>0$ such that $g\paren{z}=c H_D\paren{z,w}$.  As a result, \eqref{ERBMPKD} implies $f\paren{z}-c H^{ER}_D\paren{z,w}$ is a bounded ER-harmonic function that is $0$ on $A_0$ and thus, by the maximal principle for ER-harmonic functions, $f\paren{z}=c H^{ER}_D\paren{z,w}$ for all $z \in D$.
\end{proof}

A useful fact about $H^{ER}_D\paren{\cdot,w}$ that is not always true for $H_D\paren{\cdot,w}$ when $D$ is multiply connected is that $H^{ER}_D\paren{\cdot,w}$ has no critical values (that is, its derivative always has full rank).  This result will be crucial later when we prove that the level sets of $H^{ER}_D\paren{\cdot,w}$ are Jordan curves.  First we need two lemmas.

\begin{lemma} \label{PKLSConnected}
For each $r>0$, the set
\[V_r=\set{z\in D:H^{ER}_D\paren{z,w} \leq r}\]
is connected.
\end{lemma}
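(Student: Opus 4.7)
The plan is to argue by contradiction: suppose $V_r$ is disconnected; I will locate an ``island'' component and use the maximal principle for ER-harmonic functions (Lemma \ref{ERBMMaximalprinciple}) to force $u := H_D^{ER}(\cdot, w)$ to be identically $r$ on an open subset of $D$, contradicting the boundary behavior of $u$ on $\partial A_0 \setminus \{w\}$.

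First I would reduce to showing that the open sublevel set $W_r := \{z \in D : u(z) < r\}$ is connected. Since $u$ is non-constant and harmonic on the connected domain $D$, the strong maximum principle gives $V_r = \overline{W_r}$, so the connectedness of $V_r$ follows from that of $W_r$. Because $\partial A_0 \setminus \{w\}$ is a connected arc on which $u \to 0$ while $u \to \infty$ at $w$, a narrow interior neighborhood of $\partial A_0 \setminus \{w\}$ in $D$ lies in $W_r$ and is connected; hence there is a unique component $W_r^0$ of $W_r$ whose closure contains $\partial A_0 \setminus \{w\}$. Suppose for contradiction that $W$ is a second component; the asymptotics of $u$ at $\partial A_0$ then force $\overline{W} \cap \partial A_0 = \emptyset$.

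The heart of the argument is to treat $W$ as an ER-harmonic domain in its own right. Set
\[
W^* = W \cup \{A_i : \partial A_i \subset \overline{W}\},
\]
the enlargement of $W$ by those holes it surrounds. I would check that $W^*$ is open in $E$, with topological boundary contained in $D$ and with $u \equiv r$ along that boundary; hence $u|_{W^*}$ is continuous on $\overline{W^*}$ and ER-harmonic on $W^*$ (ER-harmonicity being a local property). Applying Lemma \ref{ERBMMaximalprinciple} to both $u - r$ and $r - u$ on $W^*$, with $\partial W^* \cap D$ playing the role of $\partial A_0$, I conclude that $u \equiv r$ on $W^*$; the identity theorem for harmonic functions on the connected domain $D$ then gives $u \equiv r$ on $D$, contradicting $u \to 0$ on $\partial A_0 \setminus \{w\}$.

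The main obstacle I anticipate is making the definition of $W^*$ rigorous: one must rule out the possibility that $\partial W$ meets some $\partial A_i$ in a proper subarc, which would leave $A_i$ unassigned by the above recipe. This is automatic whenever $u(A_i) \neq r$ by an annular-neighborhood argument (if $u(A_i) < r$, continuity of $u$ on $E$ shows that an entire annular neighborhood of $\partial A_i$ in $D$ lies in $W_r$ and must therefore lie in a single component; if $u(A_i) > r$, no part of $\partial A_i$ lies in $\overline{W_r}$). The finitely many exceptional values $r \in \{u(A_1), \ldots, u(A_n)\}$ can be handled separately, by first establishing the lemma for all other $r$ and then using that $V_r = \bigcap_{r' > r} V_{r'}$ is a decreasing intersection of compact connected sets and hence connected.
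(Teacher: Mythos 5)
Your overall strategy -- isolate the component of the sublevel set attached to $\partial A_0$ and kill any other component with a maximum-principle argument -- is essentially the paper's strategy, but two steps in your version have genuine gaps. The first and more serious one is the claim that a spurious component $W$ satisfies $\overline{W}\cap\partial A_0=\emptyset$ because ``$u\to\infty$ at $w$.'' That premise is false: $H_D^{ER}(\cdot,w)$ does not tend to $\infty$ at $w$; along tangential approaches it can tend to any finite value (the paper computes $\lim_{x\to 0}H_D^{ER}(x+ax^2 i,0)=a/\pi$ in the proof of Proposition \ref{PKLevelCurveJordan}), and indeed every level set $\gamma_r$ accumulates at $w$. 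So you cannot rule out $w\in\partial W^*$ by the asymptotics alone, and if $w\in\partial W^*$ then $u$ has no continuous extension to $\overline{W^*}$ and Lemma \ref{ERBMMaximalprinciple} -- which requires a bounded function continuous up to the boundary -- does not apply to $W^*$. The paper sidesteps exactly this difficulty by working with the one good component $U_r$ (points of $\{u<r\}$ joined to $\partial A_0\setminus\{w\}$ inside $V_r$) and applying the one-sided inequality $u(z)\ge \ev{z}{u\paren{B^{ER}(\tau)}}$ on $D\setminus\overline{U_r}$, for which the single exceptional boundary point $w$ is harmless because it carries no harmonic measure. To repair your version you would either need that inequality in place of the two-sided maximum principle, or the fine asymptotics of $u$ near $w$ showing that $\{u<r\}$ near $w$ is the complement of a disk tangent to $\partial A_0$ at $w$ and hence lies in the main component.

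The second gap is the treatment of the exceptional values $r\in\{u(A_1),\dots,u(A_n)\}$. The sets $V_{r'}=\set{z\in D: u(z)\le r'}$ are closed in $D$ but accumulate on $\partial A_0$, so they are not compact, and the theorem on decreasing intersections of compact connected sets does not apply; if you pass to closures in $\overline{D}$ or $E_\partial$ to gain compactness, the intersection picks up boundary points (all of $\partial A_0$, in fact) and no longer equals $V_r$, and connectedness is not preserved under deleting those points. The paper's argument needs no case split on $r$, which is another reason to prefer its formulation. Your opening reduction ($V_r=\overline{W_r}$, via the minimum -- not maximum -- principle) and the local verification that $u$ restricted to $W^*$ is ER-harmonic are fine.
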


\begin{proof}
Using Proposition \ref{ERPKCI}, we may assume that $\C \backslash A_0=\D$.  Let $U_r$ consist of all $z \in D$ such that $H^{ER}_D\paren{z,w}<r$ and that there is a path contained in $V_r$ from $z$ to $\partial A_0-\set{w}$.  If $z_1, z_2 \in U_r$, then there are curves $\gamma_i$, for $i=1,2$, in $V_r$ connecting $z_i$ to $\partial \D$.  By staying ``close'' to $\partial \D$, we can find a path $\gamma_3$ in $V_r$ connecting $\gamma_1$ and $\gamma_2$.  It follows that $U_r$ is path connected.  It is straightforward to verify that $U_r$ is open and that $H^{ER}_D\paren{z,w}=r$ on $\partial U_r \cap D$.  

Observe that $H_D^{ER}\paren{\cdot,w}$ restricted to $D \backslash \overline{U_r}$ is an ER-harmonic function with boundary value always greater than or equal to $r$.  As a result, $H_D^{ER}\paren{z,w} \geq r$ if $z \notin U_r$ (we use the fact that if $u$ is an ER-harmonic function on $D'$, then $u\paren{z}\geq \ev{z}{u\paren{B_{D'}^{ER}\paren{\tau_{D'}}}}$, see Remark 2.12 of \cite{MR2129588}). 
If there were a $z \notin \overline{U_r}$ with $H^{ER}_D\paren{z,w}=r$, then, by the maximal principle for ER-harmonic functions, we would have that $H^{ER}_D\paren{u,w}=r$ for all $u$ in a neighborhood of $z$. Since a harmonic function on a connected domain that is constant on a non-empty open set is constant everywhere, this would imply $H^{ER}_D\paren{z,w}=r$ for all $z \in D$.  This is a contradiction, so we conclude that $V_r=\overline{U_r}$.  Since $U_r$ is connected, it follows that $V_r$ is as well.
\end{proof}

\begin{lemma} \label{locallypoly}
If $f: D \rightarrow \C$ is a holomorphic function such that 
\[f\paren{z}=\paren{z-a}^n g\paren{z}\] 
for a holomorphic function $g$ that is non-zero in a neighborhood of $a$, then there exists a conformal map $h$ defined in a neighborhood of $a$ such that $f \circ h^{-1}\paren{z}=\paren{z-a}^n$.
\end{lemma}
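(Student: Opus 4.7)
The plan is straightforward: extract a holomorphic $n$-th root of $g$ near $a$, use it to build a holomorphic function $\phi$ satisfying $\phi^n = f$ near $a$, and then shift $\phi$ by $a$ to produce the required change of coordinates $h$.

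First, since $g$ is holomorphic and nonzero in a neighborhood of $a$, I would restrict attention to a small open disk $V \subset D$ centered at $a$ on which $g$ does not vanish. Because $V$ is simply connected and $g$ is nonvanishing there, $\log g$ admits a holomorphic branch on $V$, so we may set
\[ g^{1/n}(z) := \exp\!\paren{\tfrac{1}{n}\log g\paren{z}}, \quad z \in V, \]
which is holomorphic on $V$. Define $\phi\paren{z} := \paren{z-a}\, g\paren{z}^{1/n}$, so that by construction $\phi\paren{z}^n = \paren{z-a}^n g\paren{z} = f\paren{z}$ on $V$.

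A direct differentiation gives $\phi'\paren{a} = g\paren{a}^{1/n} \neq 0$, so by the holomorphic inverse function theorem $\phi$ is a conformal bijection from some smaller neighborhood $V' \ni a$ onto $\phi\paren{V'}$, which contains $0$. Setting $h\paren{z} := \phi\paren{z} + a$ on $V'$ yields a conformal map with $h\paren{a} = a$, and $h^{-1}$ is therefore defined on a neighborhood of $a$. For $z$ in that neighborhood, writing $w = h^{-1}\paren{z}$ gives $\phi\paren{w} = z - a$, and so
\[ f\paren{h^{-1}\paren{z}} = \phi\paren{w}^n = \paren{z-a}^n, \]
as required.

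There is essentially no obstacle here; the only delicate point is ensuring that a holomorphic $n$-th root of $g$ exists near $a$, which is handled immediately by the simple connectedness of a small disk together with the nonvanishing of $g$. The rest is the standard local normal form argument for a holomorphic function near a zero of order $n$.
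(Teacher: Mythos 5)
Your proof is correct and follows essentially the same route as the paper: both construct the branch $\sqrt[n]{g}$ near $a$ and set $h\paren{z}=\paren{z-a}\sqrt[n]{g\paren{z}}+a$, the only cosmetic difference being that you justify local injectivity via the nonvanishing derivative and the holomorphic inverse function theorem where the paper invokes the argument principle. No gaps.
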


\begin{proof}
Since $g\paren{z} \neq 0$ in a neighborhood of $a$, we can define a branch of $\sqrt[n]{g\paren{z}}$ in a neighborhood of $a$.  A straightforward application of the argument principle shows that $h\paren{z}:=\paren{z-a} \sqrt[n]{g\paren{z}}+a$ is injective in a neighborhood of $a$ and thus is a conformal map onto its image.  It is easy to check that $f \circ h^{-1}\paren{z}=\paren{z-a}^n$.
\end{proof}

\begin{proposition} \label{PKFullRank}
For all $z \in D$, the derivative of $H^{ER}_D\paren{\cdot,w}$ at $z$ has full rank.
\end{proposition}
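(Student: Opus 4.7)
The plan is to convert the claim into one about a global holomorphic function, use Lemma~\ref{locallypoly} for a normal form at a hypothetical critical point, and then use the gradient flow and Lemma~\ref{PKLSConnected} to derive a topological contradiction.

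Since $H_D^{ER}(\cdot, w)$ is harmonic on $D$, the function $\Phi := \partial_x H_D^{ER} - i\partial_y H_D^{ER}$ will be single-valued and holomorphic on $D$, with zeros exactly at the critical points of $H_D^{ER}(\cdot,w)$. Supposing for contradiction that $\Phi(z_0) = 0$ at some $z_0 \in D$, of order $n-1$ with $n \geq 2$, and applying Lemma~\ref{locallypoly} to a local antiderivative $F$ with $\mathrm{Re}\,F = H_D^{ER}(\cdot,w)$, I will obtain a conformal chart $\tilde z$ near $z_0$, with $\tilde z(z_0)=0$, in which $F - F(z_0) = \tilde z^n$. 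Writing $r = H_D^{ER}(z_0,w)$ and $\tilde z = \rho e^{i\theta}$, this gives $H_D^{ER}(z,w) - r = \rho^n \cos(n\theta)$ locally, so the $n$ directions $\theta = 2\pi k/n$ are the steepest-ascent directions at $z_0$. Because $\Phi$ has a pole at $w$, is non-vanishing near $\partial A_0 \setminus \{w\}$ by the Hopf lemma, and extends holomorphically across each $\partial A_j$ by Schwarz reflection (using that $H_D^{ER}$ is constant on $\partial A_j$), its zeros in $D$ cannot accumulate in $\overline D$, so I may assume $z_0$ attains the supremum of critical values in $D$.

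With this choice, the $n$ ascending gradient-flow curves from $z_0$ have strictly increasing $H_D^{ER}$-value along them, cannot stall at any other interior critical point, and therefore extend through $D$ to $\partial D$, terminating at $w$ (where $H_D^{ER} \to \infty$) or at some $\partial A_j$ with $c_j := H_D^{ER}(A_j,w) \geq r$. Let $L$ denote the union of these $n$ paths together with $\{z_0\}$. For any $r' < r$ close to $r$, $V_{r'}$ is disjoint from $L$, and by the normal form it has exactly $n$ disjoint ``teardrop'' components in a neighborhood of $z_0$, one in each sector between consecutive ascending paths. The contradiction will come by showing these $n$ teardrops lie in $n$ distinct connected components of $V_{r'}$, violating Lemma~\ref{PKLSConnected}.

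The main obstacle is the planar-topology step: showing $L$ partitions $D$ into at least $n$ separate regions containing the teardrops. For simply connected $D$ this is a direct Euler-characteristic count. For multiply connected $D$, removing the tree $L$ need not disconnect $D$, and I plan to handle this by augmenting $L$ with auxiliary arcs connecting each $\partial A_j$ reached by an ascending path to $\partial A_0$, reducing to the simply connected case while arranging the arcs to be disjoint from the teardrops. Carefully executing this augmentation so that no two teardrops end up in the same component is the key technical subtlety.
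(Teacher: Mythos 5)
Your overall strategy---normal form at a hypothetical critical point, then a topological contradiction with the connectedness of the sublevel sets (Lemma~\ref{PKLSConnected})---is in the right spirit, but the proof has a genuine gap exactly where you flag ``the key technical subtlety,'' and the fix you sketch does not work as described. To conclude that the $n$ local pieces of $V_{r'}$ near $z_0$ lie in distinct components of $V_{r'}$, you need a closed set $L'$ that separates them \emph{and} is disjoint from $V_{r'}$. Your ascending flow lines satisfy the disjointness (since $H_D^{ER}\paren{\cdot,w}>r>r'$ along them), but when a flow line terminates on some $\partial A_j$ the resulting tree does not separate the plane, and the auxiliary arcs you propose from $\partial A_j$ to $\partial A_0$ cannot be kept out of $V_{r'}$: since $H_D^{ER}\paren{\cdot,w}\rightarrow 0$ on $\partial A_0\backslash\set{w}$, every neighborhood of $\partial A_0\backslash\set{w}$ meets $\set{H_D^{ER}\paren{\cdot,w}<r'}$, so any such arc either funnels into $w$ or crosses $V_{r'}$, in which case the separation argument collapses. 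One is then forced to route all auxiliary arcs through $\set{H_D^{ER}\paren{\cdot,w}>r'}$ to $w$, verify such arcs exist, and control their intersections with the flow lines and with each other---which is essentially the whole difficulty, left unresolved. (Two smaller points: the reduction to ``$z_0$ attains the supremum of critical values'' invokes Schwarz reflection across $\partial A_j$, but the inner boundaries are only assumed to be closed connected sets, so you need a local uniformization first; and any correct proof must use ER-harmonicity somewhere, since plain harmonic functions such as $h_i$ on multiply connected domains do have critical points---in your argument this enters only through Lemma~\ref{PKLSConnected}, so that lemma must carry all the weight.)

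The paper avoids this global topology entirely. It takes two points $x,y$ in distinct local components of $\set{v<r}\cap B_{\epsilon}\paren{a}$ given by the normal form of Lemma~\ref{locallypoly}, joins them by a path inside the connected open set $U_r$ (Lemma~\ref{PKLSConnected}), extracts a subarc outside $B_{\epsilon}\paren{a}$, and closes it through the critical point $a$ to obtain a Jordan curve $\gamma$ on which $v\leq r$. The ER-harmonic maximum principle (via Proposition~\ref{ERHarmonicFromERBM}, which applies even when the interior of $\gamma$ contains some of the holes $A_i$) forces $v\leq r$ inside $\gamma$, contradicting the fact that the interior contains an arc of $\partial B_{\epsilon}\paren{a}$ on which $v>r$. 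If you want to salvage your approach, the cleanest repair is to abandon the flow lines and auxiliary arcs and adopt this Jordan-curve-plus-maximum-principle step, which turns the separation problem into a two-component statement that the Jordan curve theorem handles directly.
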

\begin{proof}
Suppose there were an $a\in D$ such that the derivative of $v\paren{\cdot}:=H^{ER}_D\paren{\cdot,w}$ at $a$ were zero.  Let $r=v\paren{a}$ and $U_r$ be as in Lemma \ref{PKLSConnected}.  Since $v$ is harmonic, we can find a holomorphic function $f$ defined in a neighborhood of $a$ with imaginary part equal to $v$.  Let $n$ be the order of the zero of $f\paren{z}-f\paren{a}$ at $a$.  Lemma \ref{locallypoly} implies that there is a conformal map $h$ defined on a neighborhood of $a$ such that $f \circ h^{-1}\paren{z} = \paren{z-a}^n+f\paren{a}$.  

The set of points where $v$ is equal to $r$ is equal to the image under $h^{-1}$ of the zero set of $\imag \bracket{\paren{z-a}^n}$.  As a result, for small enough $\epsilon$, the set of points where $v$ is equal to $r$ separates $B_{\epsilon}\paren{a}$ into $2n$ ordered connected components which alternate between being subsets of $U_r$ and $D\backslash U_r$.  

Let $x$ and $y$ be points in distinct connected components of $U_r \cap B_{\epsilon}\paren{a}$.  Since $U_r$ is an open, connected set, we can find a path in $U_r$ connecting $x$ and $y$.  By assumption, this path cannot be contained in $B_{\epsilon}\paren{a}$ and, therefore, a subset of it is a path $\gamma_1:[s,t] \rightarrow D\backslash B_{\epsilon}\paren{a}$ connecting the connected components of $U_r \cap B_{\epsilon}\paren{a}$ containing $x$ and $y$ respectively.  Let $\gamma_2$ be a path in $U_r \cap B_{\epsilon}\paren{a}$ connecting $a$ to $\gamma_1\paren{s}$, $\gamma_3$ be a path in $U_r \cap B_{\epsilon}\paren{a}$ connecting $\gamma_1\paren{t}$ to $a$, and $\gamma$ be the concatenation $\gamma_2$, $\gamma_1$, and $\gamma_3$.  Observe that $\gamma$ is a Jordan curve and that $v\leq r$ on $\gamma$.  As a result, Proposition \ref{ERHarmonicFromERBM} implies that $v \leq r$ on the interior of $\gamma$.  However, this is a contradiction because the interior of $\gamma$ contains one of the arcs of $\partial B_{\epsilon}\paren{a}$ connecting $\gamma\paren{s}$ and $\gamma\paren{t}$ and both these arcs contain points where $v\paren{z}>r$.  The result follows.
\end{proof}

Next we prove that the level sets of $H_D^{ER}\paren{\cdot,w}$ are Jordan curves.

\begin{proposition} \label{PKLevelCurveJordan}
If $r$ is a positive real number, then
\[\gamma_r:=\set{w}\cup \set{z:H^{ER}_D\paren{z,w}=r}\]
is a Jordan curve.\footnote{If $r=H_D^{ER}\paren{A_i,w}$ for some $i$, then in order for this to make sense, we have to work in the space $E$, not $D$.} Furthermore, $\gamma_r$ separates $D$ into two connected components,
\[ \set{z: H^{ER}_D\paren{z,w}<r } \text{ and } \set{z: H^{ER}_D\paren{z,w}>r }.\]
\end{proposition}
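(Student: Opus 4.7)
The plan is to combine the non-vanishing of the gradient of $v:=H^{ER}_D\paren{\cdot,w}$ (from Proposition \ref{PKFullRank}) with its boundary asymptotics and an ER-maximum principle argument to rule out closed components of $\set{v=r}$ inside $D$, then to use the singular behavior of $v$ at $w$ to close the remaining arcs up through $w$ into a single Jordan curve; the separation into sublevel and superlevel sets will follow from the Jordan curve theorem applied in the sphere.

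First I would record the boundary data of $v$. From \eqref{ERBMPKD}, the standard behavior of $H_D\paren{\cdot,w}$, and the continuity of each $h_i$, one has $v\to 0$ on $\partial A_0\setminus\set{w}$, $v\to v\paren{A_i}$ on each $\partial A_i$, and $v\paren{z}\to\infty$ as $z\to w$ within $D$. Combined with Proposition \ref{PKFullRank} and the implicit function theorem, this shows $\set{z\in D : v\paren{z}=r}$ is a smooth $1$-manifold for every $r>0$.

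Next I rule out closed components of this $1$-manifold inside $D$. Because $\C \setminus A_0$ is simply connected, the component of $A_0$ containing $w$ is necessarily unbounded, so any Jordan curve $C \subset D$ has that component (and hence $w$) in its exterior. If $C$ were a closed Jordan component of $\set{v=r}$ with bounded complementary region $\Omega$, then $v$ would be bounded on $E':=\paren{\Omega\cap D}\cup\set{A_i : A_i\subset \Omega}$ and ER-harmonic with constant boundary value $r$ on $C$. Applying Lemma \ref{ERBMMaximalprinciple} to both $\pm v$ on this sub-region forces $v\equiv r$ on $E'$, and analytic continuation on the connected domain $D$ then yields $v\equiv r$ on $D$, contradicting the boundary asymptotics.

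For generic $r$ (with $r\neq v\paren{A_i}$ for all $i$), the boundary asymptotics prevent any arc of the level $1$-manifold from terminating at $\partial A_0\setminus\set{w}$ or at any $\partial A_i$, so every arc has both endpoints accumulating at $w$. Near $w$, \eqref{ERBMPKD} gives $v\paren{z}\sim H_D\paren{z,w}$, and locally straightening $\partial A_0$ to a line segment and invoking \eqref{PKhalf} shows that the $r$-level curves near $w$ are asymptotic to small circles tangent to $\partial A_0$ at $w$; this forces exactly one arc of $\set{v=r}$ to approach $w$ from each side, so that adjoining $w$ yields a single Jordan curve $\gamma_r$. The exceptional case $r=v\paren{A_i}$ is handled identically in $E$: the level set also picks up $\partial A_i$, which collapses to $A_i$ in $E$, and the resulting curve from $w$ through $A_i$ back to $w$ is a Jordan curve in $E$. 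For the separation, the Jordan curve theorem in $S^2=\C\cup\set{\infty}$ splits the sphere into two simply connected discs along $\gamma_r$; removing the simply connected continua $A_0\cup\set{\infty}$ and those $A_i$ lying in the respective discs preserves connectedness of each piece, and intersecting with $D$ identifies the two components of $D\setminus\gamma_r$ as $\set{v<r}$ and $\set{v>r}$, the former being connected by Lemma \ref{PKLSConnected}.

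The main obstacle is the local analysis at $w$: making rigorous the claim that precisely one arc of $\set{v=r}$ accumulates at $w$ from each side, which requires a controlled comparison of $v$ with its Poisson-kernel leading term at the boundary singularity. The companion technicality is the case $r=v\paren{A_i}$, where the level set acquires the entire boundary component $\partial A_i$ and the Jordan curve claim only becomes correct after passing to the quotient space $E$.
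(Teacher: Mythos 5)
Your skeleton matches the paper's: full rank of the derivative (Proposition \ref{PKFullRank}) makes $K_r=\set{z\in D: H_D^{ER}\paren{z,w}=r}$ a smooth $1$-manifold, circle components are excluded by a maximum-principle argument, the remaining arcs must have both ends converging to $w$ because $v:=H_D^{ER}\paren{\cdot,w}$ has the wrong boundary values everywhere else, and the separation statement follows from the Jordan curve theorem together with Lemma \ref{PKLSConnected}. The problem is the step you yourself flag as the ``main obstacle'': that exactly one arc of $\set{v=r}$ approaches $w$ from each side. This is the entire content of the claim that $\gamma_r$ is a \emph{single} Jordan curve, and the asymptotic $v\sim H_{\half}\paren{\cdot,0}$ (after normalizing $\partial A_0=\R$, $w=0$) is too weak to deliver it: it only confines the level set near $w$ to a thin lune between two circles tangent to $\R$ at $0$, and a priori several disjoint arcs of $\set{v=r}$ could thread through that lune on the same side. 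A genuinely ``controlled comparison with the leading term'' would require asymptotics for the gradient of $v$ at $w$, not just for $v$, and you have not established those. So as written the proof does not close.

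The paper avoids this local counting problem entirely. From $\lim_{x\to 0}H_D^{ER}\paren{x+ax^2 i,0}=a/\pi$ it extracts only the weak statement that, for $a>\pi r$, the points $t+t^2i$ eventually lie on one fixed side of $\gamma_r$; a maximum-principle argument (if they were exterior, $v$ would be bounded on the interior of $\gamma_r$ and hence identically $r$ there) forces them to lie in the \emph{interior} of every Jordan-curve component of $K_r\cup\set{w}$. Consequently any two such components would be nested, and the region between them would be a domain on whose entire boundary $v\equiv r$; the same maximum-principle argument used in Lemma \ref{PKLSConnected} then forces $v\equiv r$ on that region, a contradiction. Substituting this global nesting argument for your local arc count fills the gap, and the rest of your write-up then goes through.
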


\begin{proof}
We will only consider the case where $r \neq H^{ER}_D\paren{A_i,w}$ for any $i$.  The other case is similar.  Using Proposition \ref{ERPKCI}, we may assume that $\partial A_0= \R$ and $w=0$.  Proposition \ref{PKFullRank} and basic facts from differential topology imply
\[K_r:=\set{z \in D :H^{ER}_D\paren{z,w}=r}\]
is a one-dimensional smooth real manifold.  As a result, each connected component of $K_r$ is diffeomorphic to either a circle or $\R$.  Using Proposition \ref{ERHarmonicFromERBM}, we see that the latter is not possible as it would imply $H_D^{ER}\paren{\cdot,w}$ is constant on an open subset of $D$.  As a result, it is not difficult to see that adding the point $w$ to any connected component of $K_r$ yields a Jordan curve $\gamma_r$.


Using \eqref{ERBMPKD}, \eqref{twoDomainPK}, and the Gambler's ruin estimate, we can show that $H_D\paren{z,0}\sim H_{\half}\paren{z,0}$ as $z \rightarrow 0$.  Combining this with \eqref{PKhalf}, we see that
\[\lim_{x \rightarrow 0} H_D^{ER}\paren{x + ax^2 i,0}=\frac{a}{\pi},\]
for any $a>0$.  It follows that either for all $a>\pi r$ there is an $\delta_a>0$ such that the interior of $\gamma_r$ contains $t+t^2i$ for all $\abs{t}<\delta_a$ or for all $a>\pi r$ there is an $\delta_a>0$ such that the exterior of $\gamma_r$ contains $t+t^2i$ for all $\abs{t}<\delta_a.$  In the latter case, it is easy to see that $H_D^{ER}\paren{\cdot,w}$ is bounded on the interior of $\gamma_r$, and thus, using Proposition \ref{ERHarmonicFromERBM}, is equal to $r$ on the interior of $\gamma_r$.  Since this is not possible, the former case must hold. In this case, if $K_r$ has two distinct Jordan curves $\gamma_r$ and $\gamma_r'$ in it, then it is clear one of them must be contained in the interior of the other.  An argument similar to the one in the proof of Lemma \ref{PKLSConnected} shows that this is not possible.  As a result, $K_r$ has exactly one connected component and the result follows.
\end{proof}

We now have all of the tools necessary to show that $H_D^{ER}\paren{\cdot,w}$ is the imaginary part of a conformal map onto a chordal standard domain.

\begin{theorem} \label{PKConformalMapTheorem}
Let $D \in \mathcal{Y}_n$ and suppose $\partial A_0$ is a smooth Jordan curve (in the topology of $E$) such that there is no Jordan curve in $D$ with $A_0$ in its interior.  If $w \in \partial A_0$, then there is a $D' \in \mathcal{CY}_n$ and conformal map $f: D \rightarrow D'$ such that $f\paren{w}=\infty$ and $\imag\bracket{f\paren{z}}=H_D^{ER}\paren{z,w}$.  Furthermore, if $g$ is another such map, then there are real constants $r,x$ such that $g=r f+x$.
\end{theorem}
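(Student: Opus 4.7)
The plan is to realize $f$ as $u + iv$, where $v(z) = H_D^{ER}(z,w)$ and $u$ is a harmonic conjugate of $v$. By Proposition \ref{ERBMPoissonKernelCharacterization} the function $v$ is ER-harmonic on $D$, so Lemma \ref{ERHarmonicCondition} gives that the periods $\int_{\eta_i} \frac{d}{dn} v\,\abs{dz}$ vanish for every Jordan curve $\eta_i$ surrounding a single $A_i$. These periods are precisely the obstructions to the existence of a single-valued harmonic conjugate on the multiply connected $D$ (compare \eqref{conCond}), so $u$ exists globally and is unique up to a real additive constant, and $f = u + iv$ is a well-defined holomorphic function on $D$ with $\imag f = H_D^{ER}(\cdot,w)$.

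The second step is to read off the boundary behavior. Since $v$ extends continuously to $E$ and equals $H_D^{ER}(A_i, w)$ on $A_i$, the map $f$ sends $\partial A_0 \setminus \set{w}$ into $\R$ and each $\partial A_i$ (for $1 \leq i \leq n$) into the horizontal line $\set{\imag z = H_D^{ER}(A_i, w)}$. For the behavior at $w$ I would apply a local conformal map straightening $\partial A_0$ to the real axis near $w$; then \eqref{ERBMPKD}, \eqref{twoDomainPK}, and the Gambler's ruin estimate already used in the proof of Proposition \ref{PKLevelCurveJordan} give $H_D^{ER}(z,w) \sim H_{\half}(z,w)$ as $z \to w$, and \eqref{PKhalf} implies that $f$ has a simple pole at $w$, so $f(w) = \infty$. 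Because $v > 0$ on $D$, the image $f(D)$ lies in $\half$.

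The main obstacle is upgrading the local biholomorphism to a global bijection onto a chordal standard domain. Proposition \ref{PKFullRank} ensures $f$ has no critical points, so $f$ is everywhere locally conformal. I would then run a covering/degree argument: using the boundary behavior from the previous step, $f$ is proper as a map from $D$ onto its image in $\half \setminus \bigcup_{i=1}^n L_i$, where $L_i := f(\partial A_i)$ is a compact connected subset of a horizontal line. The cardinality of $f^{-1}(z_0)$ is therefore locally constant on this target, and evaluating at $z_0$ with large imaginary part---where the only preimage sits in a small neighborhood of the simple pole $w$ and the simple-pole expansion is a local bijection---forces this common cardinality to be $1$. This gives injectivity, each $L_i$ is then a closed connected piece of a horizontal line, i.e.\ a closed segment, and so $f(D) \in \mathcal{CY}_n$. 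Uniqueness is then immediate from Proposition \ref{ERBMPoissonKernelCharacterization}: any other conformal map $g$ of $D$ onto a chordal standard domain with $g(w) = \infty$ has $\imag g$ positive and ER-harmonic on $D$ with the same vanishing on $\partial A_0 \setminus \set{w}$, so $\imag g = r\, \imag f$ for some $r > 0$, whence $g - rf$ is a real-valued holomorphic function on the connected domain $D$ and thus equals a real constant $x$.
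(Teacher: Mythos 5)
Your construction of $f$ and your uniqueness argument coincide with the paper's: both obtain the single-valued harmonic conjugate $u$ from the vanishing of the periods $\int_{\eta_i}\frac{d}{dn}\,v\,\abs{dz}$ (Lemma \ref{ERHarmonicCondition}), and both reduce uniqueness to Proposition \ref{ERBMPoissonKernelCharacterization}. Where you genuinely diverge is in proving that $f$ is a bijection onto a chordal standard domain. The paper works with level sets: Proposition \ref{PKLevelCurveJordan} makes each $\gamma_r=\set{v=r}\cup\set{w}$ a Jordan curve, Proposition \ref{PKFullRank} forces $\frac{d}{dn}\,v$ to have constant sign along $\gamma_r$, so $u$ is strictly monotone on each level curve; injectivity follows because distinct level curves map into distinct horizontal lines, and the estimate $\frac{d}{dn_{\epsilon}}\,v\paren{w_{\epsilon}}\sim\frac{1}{\pi\epsilon^2}$ shows $\abs{u}\rightarrow\infty$ at both ends of each $\gamma_r$, which is what identifies the image as $\half$ minus horizontal slits. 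Your properness/degree argument replaces all of this with Proposition \ref{PKFullRank} plus boundary control, and buys surjectivity onto $\half\backslash\bigcup L_i$ for free once the degree is identified as $1$; it is essentially the classical argument-principle proof of the parallel-slit theorem transplanted to this setting, and it does not need Proposition \ref{PKLevelCurveJordan} at all.

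The one step you assert without proof, and which is genuinely needed, is that each $L_i$ (properly: the cluster set of $f$ along $\partial A_i$, since $f$ is not a priori defined on $\partial A_i$) is a \emph{compact} subset of the line $\set{\imag z=H_D^{ER}\paren{A_i,w}}$, i.e.\ that $u$ is bounded near $A_i$. Without this, $\half\backslash\bigcup L_i$ need not be connected (an entire horizontal line would disconnect $\half$), the locally-constant-degree argument breaks, and the conclusion $f\paren{D}\in\mathcal{CY}_n$ is unavailable. The fact is true but requires an argument: map a neighborhood of $A_i$ in $\C\backslash A_i$ conformally onto an annulus; then $\imag f-H_D^{ER}\paren{A_i,w}$ becomes a harmonic function vanishing on the inner circle whose period around that circle vanishes (ER-harmonicity again), so $f$ extends holomorphically across the circle by Schwarz reflection, and in particular $u$ is bounded there. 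A similar reflection at $w$ (where $v$ vanishes on $\partial A_0\backslash\set{w}$ and $v>0$ in $D$, so the reflected $f$ omits an open set and the isolated singularity is a pole, of order one because $\imag f$ changes sign only twice around $w$) is the clean way to justify your ``simple pole'' claim, which you also need both for properness at $w$ and for the degree count at points of large imaginary part. With these two boundary-regularity facts supplied, your argument is a correct alternative to the paper's.
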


\begin{proof}
Using Lemma \ref{ERHarmonicCondition} and (say) Proposition 13.3.5 of \cite{MR1344449}, we see that a harmonic function $h$ that is continuous on $E$ is the imaginary part of a holomorphic function if and only if it is ER-harmonic.  It follows that if $D'\in \mathcal{CY}_n$ and $f:D \rightarrow D'$ is a conformal map with $f\paren{w}=\infty$, then the imaginary part of $f$ is a positive ER-harmonic function such that $f\paren{z} \rightarrow 0$ as $z \rightarrow w'$ for any $w'\neq w$.  By Proposition \ref{ERBMPoissonKernelCharacterization}, this implies that the imaginary part of $f$ is a real constant multiple of $H_D^{ER}\paren{\cdot,w}$.  Combining this with the fact the imaginary part of a holomorphic function determines the real part up to a real additive constant, we obtain the uniqueness statement.

As noted above, $v\paren{\cdot}:=H_D^{ER}\paren{\cdot,w}$ is the imaginary part of a holomorphic function $f=u+iv$.  Furthermore, $u$ is defined up to a real additive constant by
\begin{equation}\label{holomorphicFromImaginary}
u\paren{z}= u\paren{z_0} + \int_{\gamma} \frac{d}{dn}~v\paren{z} \abs{dz},
\end{equation}
where $\gamma$ is a smooth curve connecting $z_0$ and $z$ and the normal derivative is chosen with the correct sign. To complete the proof, we need to show that $f$ is injective and $f\paren{D} \in \mathcal{CY}_n$.

Proposition \ref{PKLevelCurveJordan} implies that the sign of $\frac{d}{dn}~v\paren{z}$ is constant on $\gamma_r$.  As a result, $u\paren{\gamma_r\paren{t}}$ is increasing (when the appropriate parametrization of $\gamma_r$ is chosen).  In fact, $u\paren{\gamma_r\paren{t}}$ is strictly increasing since otherwise $f$ would be constant on a segment of a curve (and hence everywhere constant).  It follows that if $r \neq v\paren{A_i}$ for any $i$, then $f$ is injective on $\gamma_r$.  If $r=v\paren{A_i}$ and $z$ and $w$ are two points on $\gamma_r$, it is not hard to see that we can still find a curve connecting $v$ and $w$ on which the sign of $\frac{d}{dn} v\paren{z}$ is constant.  Arguing as before, it follows that $f$ is injective on $\gamma_r$.

Let $w_{\epsilon} \in \partial A_0$ be distance $\epsilon$ away from $w$ in the counterclockwise direction and $n_{\epsilon}$ be the inward pointing normal at $w_{\epsilon}$.  Using \eqref{PKhalf} and \eqref{PKCI}, we can check that $\frac{d}{dn_{\epsilon}} v\paren{w_{\epsilon}} \sim \frac{1}{\pi \epsilon^2}$ as $\epsilon \rightarrow 0$.  Since $\gamma_r$ is tangent to $\partial A_0$ at $w$, it follows that
\[\frac{d}{dn}~v\paren{\gamma_r\paren{t_{\epsilon}}}\sim \frac{1}{\pi \epsilon^2},~~~\epsilon \rightarrow 0,\]
where $t_{\epsilon}$ is such that $\gamma_r\paren{t_{\epsilon}}$ is distance $\epsilon$ from $w$.  It follows from \eqref{holomorphicFromImaginary} that $\abs{u\paren{z}} \rightarrow \infty$ as $z$ approaches $w$ along $\gamma_r$ and hence, $f\paren{D} \in \mathcal{CY}_n$.
\end{proof}

\begin{remark}
It is not hard to check that a version of Theorem \ref{PKConformalMapTheorem} holds for (suitable) countably connected domains with essentially the same proof.  
\end{remark}

\section{The Green's Function for ERBM} \label{sectGF}

\subsection{Definition and Basic Properties}

\subsection{Proofs of formulas (\ref{ERBMalltime}) and (\ref{ERBMnolimit})} \label{sectGFPOF}
\label{chapGFPOF}
The theory of Green's functions for ERBM can be used to prove formulas \eqref{ERBMalltime} and \eqref{ERBMnolimit}.  We start with a lemma.

\begin{lemma}
Let $A_{1,r} \in \mathcal{Y}_1$ be as in Lemma \ref{GreensFunctionAnnulus} and $\tau=\inf\set{t:B^{ER}_{A_{1,r}}\paren{t} \in \partial A_0}$.  If $f: A_{1,r} \rightarrow D$ is a conformal map and $D$ is bounded, then
\[\ev{z}{\int_0^{\tau} \abs{f'\paren{B^{ER}_{A_{1,r}}\paren{s}}}^2 ds} < \infty.\]
\label{allpathlemma}
\end{lemma}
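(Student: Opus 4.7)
The plan is to express the expected time-change integral as an ERBM Green's function integral and then exploit the explicit form of $G^{ER}_{A_{1,r}}$ on the annulus.

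First, I would apply Lemma \ref{greendefExtension} with $V=A_{1,r}$ and $g(w)=\abs{f'(w)}^2$. The integrability hypothesis $g\in L^1(A_{1,r})$ holds automatically because the change-of-variables formula gives $\int_{A_{1,r}}\abs{f'(w)}^2\,dw=\mathrm{Area}(D)<\infty$, since $D$ is bounded. This yields
\begin{equation*}
\ev{z}{\int_0^{\tau} \abs{f'(B^{ER}_{A_{1,r}}(s))}^2\,ds}=\int_{A_{1,r}} G^{ER}_{A_{1,r}}(z,w)\,\abs{f'(w)}^2\,dw.
\end{equation*}

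Next, I would apply Proposition \ref{ERBMGreenDecomp} to split the right-hand side as
\begin{equation*}
\int_{A_{1,r}} G_{A_{1,r}}(z,w)\,\abs{f'(w)}^2\,dw+h_1(z)\int_{A_{1,r}} G^{ER}_{A_{1,r}}(A_1,w)\,\abs{f'(w)}^2\,dw,
\end{equation*}
and bound each summand separately. For the first summand, changing variables by $\zeta=f(w)$ together with the classical conformal invariance of the Brownian Green's function gives $\int_{A_{1,r}} G_{A_{1,r}}(z,w)\,\abs{f'(w)}^2\,dw=\int_D G_D(f(z),\zeta)\,d\zeta$, which is the expected Brownian exit time from the bounded domain $D$ starting at $f(z)$, and hence is finite. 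For the second summand, the explicit formula $G^{ER}_{A_{1,r}}(A_1,w)=(\log r-\log\abs{w})/\pi$ from Lemma \ref{GreensFunctionAnnulus} shows that $G^{ER}_{A_{1,r}}(A_1,\cdot)$ is bounded above on $A_{1,r}$ by $(\log r)/\pi$, so this piece is at most $(\log r/\pi)\,\mathrm{Area}(D)<\infty$.

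The subtlety to watch out for is circularity: the general conformal-invariance statement for the ERBM Green's function (Proposition \ref{ERBMGreenCI}) has its proof resting on \eqref{ERBMnolimit}, which is precisely what the present lemma will be used to establish. The plan above sidesteps this by invoking only Lemma \ref{greendefExtension} (a measure-theoretic statement), Proposition \ref{ERBMGreenDecomp} (whose proof uses the strong Markov property together with the fact that ERBM agrees with Brownian motion up to the first hit of $\partial D$), the classical conformal invariance of the Brownian Green's function, and the direct annulus computation in Lemma \ref{GreensFunctionAnnulus}, none of which depends on \eqref{ERBMnolimit}.
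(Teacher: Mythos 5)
Your proof is correct, and it shares its opening move with the paper's: both begin by invoking Lemma \ref{greendefExtension} to rewrite the expectation as $\int_{A_{1,r}} G^{ER}_{A_{1,r}}\paren{z,w}\abs{f'\paren{w}}^2\,dw$. After that the two arguments diverge. The paper splits the \emph{domain of integration} into a small ball $B_{\epsilon}\paren{z}$ and its complement, using that $\abs{f'}$ is bounded near $z$ while $G^{ER}_{A_{1,r}}\paren{z,\cdot}$ is locally integrable there, and that $G^{ER}_{A_{1,r}}\paren{z,\cdot}$ is bounded off $B_{\epsilon}\paren{z}$ while $\int\abs{f'}^2$ equals the (finite) area of $D$. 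You instead split the \emph{Green's function} via Proposition \ref{ERBMGreenDecomp} into its Brownian part and its $G^{ER}_{A_{1,r}}\paren{A_1,\cdot}$ part: the Brownian piece becomes, after the change of variables $\zeta=f\paren{w}$ and conformal invariance of $G$, the expected Brownian exit time from the bounded domain $D$, which is finite; the second piece is controlled by the explicit bound $G^{ER}_{A_{1,r}}\paren{A_1,w}\leq \paren{\log r}/\pi$ from Lemma \ref{GreensFunctionAnnulus}. Your route handles the logarithmic singularity at $z$ more cleanly (it is absorbed into the exit-time identity rather than treated by a local boundedness argument), and it gives an explicit bound $\ev{f\paren{z}}{\tau_D}+\paren{\log r/\pi}\,\mathrm{Area}\paren{D}$; the paper's route is more robust in that it only needs local integrability and boundedness of $G^{ER}_{A_{1,r}}\paren{z,\cdot}$, which is exactly the property the paper later isolates in Section \ref{sectGFPOF} when bootstrapping to general conformal annuli. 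Your explicit check that the argument does not pass through Proposition \ref{ERBMGreenCI}, whose proof relies on \eqref{ERBMnolimit}, correctly identifies and avoids the one genuine circularity risk here, and matches the paper's own discussion following Proposition \ref{allpaththeoremERBM}.
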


\begin{proof}
Using Lemma \ref{greendefExtension}, we have that for sufficiently small $\epsilon$
\begin{align*}
\ev{}{\int_0^{\tau} \abs{f'\paren{B^{ER}_{A_{1,r}}\paren{s}}}^2 ds}=&\int_{A_{1,r}}G^{ER}_{A_{1,r}}\paren{z,w}\abs{f'\paren{w}}^2 dw\\
=&\int_{B_{\epsilon}\paren{z}}G^{ER}_{A_{1,r}}\paren{z,w}\abs{f'\paren{w}}^2 dw\\
&+\int_{A_{1,r} \backslash {B_{\epsilon}\paren{z}}}G^{ER}_{A_{1,r}}\paren{z,w}\abs{f'\paren{w}}^2 dw.
\end{align*}
Since $\abs{f'\paren{r}}$ is bounded on ${B_{\epsilon}\paren{z}}$ and the Green's function for ERBM is integrable, the first integral in the sum is finite.  Since $G^{ER}_{A_{1,r}}\paren{z,\cdot}$ is bounded on $A_{1,r} \backslash B\paren{z,\epsilon}$ and $\int_{A_{1,r}} \abs{f'\paren{w}}^2 dw$ is equal to the area of $D$ (by a straightforward change of variables), the second integral in the sum is also bounded.  
\end{proof}

\begin{proposition} \label{allpaththeoremERBM}
Let $f: \C\backslash\D \rightarrow D$ be a conformal map sending $\infty$ to $\infty$. Then a.s. we have
\begin{equation}
\int_0^t \abs{f'\paren{B_{\C \backslash \D}^{ER}\paren{s}}}^2 ds < \infty
\label{nolimitproofERBM}
\end{equation}
and
\begin{equation}
\int_0^{\infty} \abs{f'\paren{B_{\C \backslash \D}^{ER}\paren{s}}}^2ds = \infty.
\label{allpathproofERBM}
\end{equation}
\end{proposition}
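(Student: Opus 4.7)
The plan is to treat the two formulas separately, using a localization at large radius for the finiteness claim~\eqref{nolimitproofERBM} and an i.i.d.\ excursion decomposition for the divergence claim~\eqref{allpathproofERBM}.

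For~\eqref{nolimitproofERBM}, I would fix $R > 1$ and let $\tau_R = \inf\set{s \geq 0 : R_s \geq R}$. Up to time $\tau_R$, the process $B^{ER}_{\C \backslash \D}$ coincides with ERBM in the annulus $A_{1,R}$ killed on its outer boundary, and since $f$ fixes $\infty$ the image $f(A_{1,R})$ is a bounded subset of $D$. Lemma~\ref{allpathlemma} then yields
\[ \ev{z}{\int_0^{\tau_R} \abs{f'\paren{B^{ER}_{\C \backslash \D}\paren{s}}}^2 ds} < \infty, \]
so this integral is finite almost surely. By Proposition~\ref{radialpartERBM}, $R_s$ has the law of the radial part of reflected Brownian motion in $\C\backslash \D$, which does not explode in finite time, so $\tau_R \to \infty$ a.s.\ as $R \to \infty$. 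For any fixed $t < \infty$, taking $R$ large enough gives $\tau_R > t$, which establishes~\eqref{nolimitproofERBM}.

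For~\eqref{allpathproofERBM}, I would fix the circle $C_2 = \set{\abs{z}=2}$ and recursively define stopping times $\sigma_0 = \inf\set{s \geq 0 : R_s = 1}$ and, for $k \geq 0$,
\[ \tau_k = \inf\set{s > \sigma_k : B^{ER}_{\C\backslash\D}\paren{s} \in C_2}, \qquad \sigma_{k+1} = \inf\set{s > \tau_k : R_s = 1}. \]
Since the radial part of $2$-dimensional reflected Brownian motion is neighborhood-recurrent at $\partial \D$, Proposition~\ref{radialpartERBM} implies that all of these stopping times are a.s.\ finite. Set $Z_k = \int_{\sigma_k}^{\tau_k} \abs{f'\paren{B^{ER}_{\C\backslash\D}\paren{s}}}^2 ds$. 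By the strong Markov property at $\sigma_k$ together with the rotational invariance of $B^{ER}_{\C\backslash\D}$ (equivalently, part~(3) of Definition~\ref{characterizationERBM} via Proposition~\ref{excursiondistERBM}), the variables $(Z_k)_{k \geq 0}$ are i.i.d.\ and non-negative. Each $Z_k$ is strictly positive almost surely, because on $[\sigma_k, \tau_k]$ the process is, by Proposition~\ref{ERBMlikeBM}, an ordinary Brownian motion in $A_{1,2}$ and $\abs{f'}^2$ is a strictly positive continuous function on that open annulus. A standard Borel--Cantelli or strong-law argument then yields $\sum_k Z_k = \infty$ a.s., which gives~\eqref{allpathproofERBM}.

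The main obstacle I expect is making rigorous the i.i.d.\ structure of $(Z_k)$: although the strong Markov property at $\sigma_k$ is immediate, one must verify that the law of the full excursion between $\sigma_k$ and $\tau_k$ depends only on $B^{ER}_{\C\backslash\D}\paren{\sigma_k} = \overline{\D}$ and not on the past $\mathscr{F}_{\sigma_k}$. This is essentially built into the construction of ERBM in Section~\ref{ChapERBMCD}, but packaging it cleanly with Proposition~\ref{excursiondistERBM} requires some care. Once this is settled, both~\eqref{nolimitproofERBM} and~\eqref{allpathproofERBM} follow quickly from Lemma~\ref{allpathlemma} and the recurrence of the radial part.
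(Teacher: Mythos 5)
Your proof is correct, and the two halves fare differently against the paper. For \eqref{nolimitproofERBM} your argument is essentially the paper's: the paper intersects the event $\set{\int_0^t \abs{f'}^2 = \infty}$ with the events $W_n = \set{B^{ER} \text{ has not left } A_{1,n} \text{ by time } t}$, kills each intersection via Lemma \ref{allpathlemma}, and concludes from the boundedness of paths on $[0,t]$; your $\tau_R \to \infty$ formulation is the same localization phrased with stopping times. For \eqref{allpathproofERBM}, however, you take a genuinely different and heavier route. The paper's proof is a one-liner: since $f(\infty)=\infty$, $\abs{f'}$ is bounded below on $\set{\abs{z}>2}$, and the occupation time of that set is infinite because the radial part is that of a reflected Brownian motion; divergence is immediate, with no need for the strong Markov property or any independence structure. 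Your i.i.d.\ excursion decomposition also works -- at the return times $\sigma_k$ the process sits at the single collapsed state $\overline{\D}$, so the strong Markov property of the Feller--Dynkin process really does make the post-$\sigma_k$ laws identical and independent of $\mathscr{F}_{\sigma_k}$, and Borel--Cantelli II applied to $\set{Z_k > \epsilon}$ finishes it; the "obstacle" you flag at the end is thus already resolved by the construction in Section \ref{ChapERBMCD}. What your approach buys is robustness (it would survive even if $\abs{f'}$ degenerated at infinity, so long as each crossing of $A_{1,2}$ contributed positive mass), at the cost of invoking machinery the paper avoids. One small imprecision: on $[\sigma_k,\tau_k]$ the process is not an ordinary Brownian motion in $A_{1,2}$ -- it typically returns to $\partial\D$ and reflects many times before reaching $C_2$ -- but since it must still spend positive time in, say, $\set{3/2 < \abs{z} < 2}$ where $\abs{f'}^2$ is bounded below, your conclusion $Z_k>0$ a.s.\ stands.
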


\begin{proof}
For fixed $t$, let $W$ be the set of $\omega$ in the underlying probability space such that the left hand side of \eqref{nolimitproofERBM} is infinite and for each $n \in \N$, let $W_n$ be the set of $\omega$ such that $B^{ER}_{D}$ has not left $A_{1,n}$ by time $t$.  By Lemma \ref{allpathlemma}, the measure of $W_n \cap W$ is zero.  It follows that for almost every $\omega \in W$, the path of $B^{ER}_{\C\backslash \D}$ up to time $t$ is unbounded.  It is easy to see from the definition of ERBM that this implies that $W$ has measure $0$.

It is easy to see that $\abs{f'}$ is bounded below on the set
\[\set{z \in \C: \abs{z}>2}.\]
Since the set of $t$ such that $\abs{B^{ER}_{\C\backslash\D}\paren{t}}>2$ has infinite measure, \eqref{allpathproofERBM} follows.
\end{proof}

Proposition \ref{allpaththeoremERBM} clarifies the implicit use of \eqref{nolimitproofERBM} and its analogs.  The reader can verify that the proof of Proposition \ref{allpaththeoremERBM} does not rely on any of the results that used \eqref{nolimitproofERBM}.  For instance, in the proof of Proposition \ref{ERBMGreenCI} we used the fact that a.s. \eqref{nolimitproofERBM} holds for any finitely connected region $D$. Using the definition of ERBM, it is easy to see that to prove this, it is enough to prove it for any domain conformally equivalent to $\C\backslash \D$.  Notice, however, that the only property of $\C\backslash \D$ we used in the proof of Lemma \ref{allpathlemma} was that $G_{A_{1,r}}^{ER}\paren{z,\cdot}$ is bounded away from $z$ and integrable in a neighborhood of $z$.  Once we know Proposition \ref{allpaththeoremERBM} holds, the proof of Proposition \ref{ERBMGreenCI} works for $D=\C\backslash \D$ and we can use Proposition \ref{ERBMGreenCI} and Proposition  \ref{ERBMGreenDecomp} to conclude that $G_{D}^{ER}\paren{z,\cdot}$ is bounded away from $z$ and integrable in a neighborhood of $z$ for any region $D$ conformally equivalent to $\C \backslash \D$.  This allows us to prove an analog of Proposition \ref{allpaththeoremERBM} for any conformal annulus, which is what we needed.

\subsection{Conformal Mapping Using $G_D^{ER}(z,\cdot)$}

We call a domain a \emph{bilateral standard domain} if it is an annulus of outer radius $1$ with a finite number of concentric arcs removed.  We call a domain a \emph{standard domain} if it is the unit disk with a finite number of concentric arcs removed.  Analogous to the connection between $H_D^{ER}\paren{\cdot,w}$ and conformal maps onto chordal standard domains, there is a connection between $G_D^{ER}\paren{z,\cdot}$ and conformal maps onto bilateral standard domains and standard domains.  We start by giving analogs of Lemma \ref{PKLSConnected}, Proposition \ref{PKFullRank}, and Proposition \ref{PKLevelCurveJordan} for $G_D^{ER}$.  The proofs are similar and are omitted.

\begin{lemma} \label{GFLSConnected}
For each positive real number $r$, the set
\[V_r=\set{w:G_D^{ER}\paren{z,w} \leq r} \]
is connected. Furthermore, for each $z \in V_r$, there is a path contained in $V_r$ starting at $z$ and ending at a point in $\partial A_0$.
\end{lemma}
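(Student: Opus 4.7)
The plan is to mimic the proof of Lemma \ref{PKLSConnected}, adapting it to the fact that $G_D^{ER}\paren{z,\cdot}$ blows up at the interior point $z$ rather than on $\partial A_0$. First I would use the conformal invariance given by Proposition \ref{ERBMGreenCI} to reduce to the case $\C \backslash A_0 = \D$. Then define
\[ U_r := \set{w \in D : G_D^{ER}\paren{z,w} < r \text{ and there is a path in } V_r \text{ from } w \text{ to } \partial A_0}. \]
Because $G_D^{ER}\paren{z,\cdot}$ is continuous on $E - \set{z}$ and ER-harmonic on $D - \set{z}$ (Propositions \ref{ERBMGreenERHarmonic} and \ref{ERBMGreenUnique}), $U_r$ is open. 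The boundary asymptotic $G_D^{ER}\paren{z,w} \to 0$ as $w \to \partial A_0$ guarantees a full neighborhood of $\partial A_0$ lies inside $V_r$, so any two points of $U_r$ can be connected by first traveling to $\partial A_0$ along the paths guaranteed by the definition of $U_r$ and then hugging $\partial A_0$; hence $U_r$ is path-connected. Continuity also gives $G_D^{ER}\paren{z,\cdot} = r$ on $\partial U_r \cap D$.

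The key step is to show $V_r \subseteq \overline{U_r}$, which I would establish by the same maximum-principle argument used in Lemma \ref{PKLSConnected}. On the open set $W := D - \overline{U_r}$, the function $G_D^{ER}\paren{z,\cdot} - r$ is ER-harmonic away from $z$ with boundary value $0$ on $\partial U_r \cap D$. If $z \in W$, the logarithmic blow-up from Proposition \ref{ERBMGreenUnique} forces $G_D^{ER}\paren{z,\cdot} > r$ near $z$, and applying Lemma \ref{ERBMMaximalprinciple} on exhaustions of $W$ by sets avoiding $z$ gives $G_D^{ER}\paren{z,\cdot} \geq r$ throughout $W$; strict inequality follows since equality at any point would, by the strong maximum principle, force $G_D^{ER}\paren{z,\cdot} \equiv r$, contradicting the blow-up at $z$. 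The case $z \notin W$ is simpler and direct. In either case $W \cap V_r = \emptyset$, so $V_r = \overline{U_r}$, which inherits connectedness from $U_r$.

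For the second assertion, I would observe that if $w \in V_r = \overline{U_r}$ then either $w \in U_r$ (and the desired path exists by definition) or $w$ lies on the level set $\set{G_D^{ER}\paren{z,\cdot}=r}$. In the latter case, an analog of Proposition \ref{PKFullRank}, proved by the same holomorphic-function argument combined with the ER-harmonic maximum principle, shows that $G_D^{ER}\paren{z,\cdot}$ has no critical points in $D$ except possibly at finitely many exceptional levels $\set{G_D^{ER}\paren{z,A_i}}$. At a generic level, the level curve is a smooth arc and a small perpendicular perturbation from $w$ lands in $U_r$, after which one concatenates with a path from $U_r$; at an exceptional level one argues similarly after first moving along the level set through $A_i$ within $V_r$.

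The main obstacle, as in the analogous PK development, will be handling the singularity of $G_D^{ER}\paren{z,\cdot}$ at $z$ when applying the maximum principle on $W$, since unlike the Poisson kernel case the singularity is interior: the approach is to excise a small disk around $z$ before invoking Lemma \ref{ERBMMaximalprinciple} and then pass to the limit. A secondary complication is the case $z = A_i$, where the singularity lives at a ``boundary component'' of the enlarged state space $E$; this is handled using the explicit behaviour of $G_D^{ER}\paren{A_i,\cdot}$ near $A_i$ derived in the proof of Proposition \ref{ERBMGreenERHarmonic} via \eqref{GreenERBMAnnulus}.
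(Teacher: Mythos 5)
This is essentially the paper's intended argument: the paper omits the proof as being ``similar'' to that of Lemma \ref{PKLSConnected}, and your adaptation --- reducing via conformal invariance, defining $U_r$ by the path-to-$\partial A_0$ condition, and showing $V_r=\overline{U_r}$ by the maximum principle with the interior singularity at $z$ excised --- is exactly that adaptation, correctly accounting for the pole now lying in the interior rather than on $\partial A_0$. One small caution: for the second assertion you invoke the analog of Proposition \ref{PKFullRank}, i.e.\ Proposition \ref{GFFullRank}, whose proof in the paper's ordering comes \emph{after} and relies on the connectedness part of this lemma; this is not circular provided you only feed it the first assertion, but it is cleaner (and sufficient) to step off the level set into $U_r$ using only the local normal form of Lemma \ref{locallypoly} near a point of $\set{G_D^{ER}\paren{z,\cdot}=r}$.
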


\begin{proposition}\label{GFFullRank}
For all $w \in D$, the derivative of $G_D^{ER}\paren{z,\cdot}$ at $w$ has full rank.
\end{proposition}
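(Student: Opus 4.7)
The plan is to mirror the proof of Proposition \ref{PKFullRank}, with $u(\cdot) := G_D^{ER}(z,\cdot)$ in place of $H_D^{ER}(\cdot,w)$. Suppose, for contradiction, that the derivative of $u$ vanishes at some $a \in D$ with $a \neq z$, and put $r := u(a)$. Since $u$ is harmonic near $a$, there is a holomorphic $f$ with $\mathrm{Im}\,f = u$ in a neighborhood of $a$; the critical-point hypothesis gives $f'(a) = 0$, hence $f(w) - f(a) = (w-a)^n g(w)$ with $n \geq 2$ and $g$ holomorphic and nonvanishing near $a$. Lemma \ref{locallypoly} produces a conformal $h$ defined near $a$ with $f \circ h^{-1}(w) = (w-a)^n + f(a)$, so the level set $\{u = r\}$ near $a$ consists of $2n$ analytic arcs meeting transversally at $a$, dividing $B_\epsilon(a)$ into $2n$ sectors that alternate between subsets of $\{u < r\}$ and subsets of $\{u > r\}$.

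The set $U_r := \{u < r\}$ is open and path-connected by the same argument as in Lemma \ref{PKLSConnected}, using Lemma \ref{GFLSConnected} to obtain paths in $V_r := \{u \leq r\}$ to $\partial A_0$ and the fact that $u$ is small near $\partial A_0$ to splice such paths together while remaining in $U_r$. Pick points $x, y$ in two distinct $\{u < r\}$-sectors of $B_\epsilon(a)$ and a path in $U_r$ from $x$ to $y$; since these two sectors are different connected components of $U_r \cap B_\epsilon(a)$, the path must exit $B_\epsilon(a)$, yielding a subpath $\gamma_1 \subset D \setminus B_\epsilon(a)$ that joins the two sectors. Closing $\gamma_1$ up with short arcs through $a$ lying inside the two sectors produces a Jordan curve $\gamma$ on which $u \leq r$, with equality only at $a$.

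To finish, I would apply the maximum principle for ER-harmonic functions (Lemma \ref{ERBMMaximalprinciple}) on the bounded complementary component $\Omega$ of $\gamma$ to conclude $u \leq r$ on $\Omega$, and then derive a contradiction from the fact that $\Omega$ must contain part of an adjacent $\{u > r\}$-sector of $B_\epsilon(a)$. The essential obstacle, not present in Proposition \ref{PKFullRank}, is that when $z \in D$ the function $u$ is only ER-harmonic on $D \setminus \{z\}$, so Lemma \ref{ERBMMaximalprinciple} is only applicable when $z \notin \Omega$. I would arrange this by topological choice: since $z$ lies in the closed set $\{u \geq r\}$ while $\gamma_1$ lies in the path-connected open set $U_r$, the pair of adjacent sectors and the connecting path within $U_r$ can be chosen so that the resulting Jordan curve $\gamma$ encloses a $\{u > r\}$-sector but leaves $z$ in the unbounded complementary component. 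Once $z \notin \Omega$, $u$ is ER-harmonic and bounded on $\Omega$ (no portion of $\partial A_0$ can be enclosed, since $\gamma \subset D$ while $\partial A_0 \subset \partial D$), and Lemma \ref{ERBMMaximalprinciple} closes the argument. In the case $z \in \{A_1, \ldots, A_n\}$ the function $u$ is already ER-harmonic on all of $D$, this topological step is unnecessary, and the argument reduces to a direct transcription of the proof of Proposition \ref{PKFullRank}.
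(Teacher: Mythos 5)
Your skeleton is the intended one (the paper itself only says the proof is ``similar'' to that of Proposition \ref{PKFullRank} and omits it), and you have correctly isolated the one genuinely new difficulty: $u=G_D^{ER}\paren{z,\cdot}$ is only ER-harmonic on $D-\set{z}$, so the maximum principle cannot be applied on a complementary component of $\gamma$ that contains the singularity. But your resolution of that difficulty has two concrete problems. First, your claim that for $z\in\set{A_1,\ldots,A_n}$ the function $G_D^{ER}\paren{A_i,\cdot}$ is ER-harmonic on all of $D$ is false: Proposition \ref{ERBMGreenERHarmonic} gives ER-harmonicity only on $D-\set{A_i}$, and \eqref{GreenERBMNormalInt} shows the flux of $G_D^{ER}\paren{A_i,\cdot}$ around $A_i$ is $-2$, not $0$, so the mean value property \eqref{ERBMMeanvalue} genuinely fails at $A_i$. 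If the Jordan curve you build happens to surround $A_i$, the appeal to Lemma \ref{ERBMMaximalprinciple} on its interior is not licensed, so this case is not ``a direct transcription'' and needs exactly the same care as $z\in D$. Second, for $z\in D$ your fix is the bare assertion that the connecting path in $U_r$ ``can be chosen'' so that $z$ ends up in the unbounded component of $\C\backslash\gamma$. That is precisely the point requiring an argument, and none is given; a priori the connected set $U_r$ could force every such Jordan curve to wind around $z$.

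Both problems disappear once you observe that you never need to choose which component to work in, because either one yields the contradiction. If $m\geq 2$ is the order of vanishing of $f-f\paren{a}$ at $a$, there are $2m\geq 4$ alternating sectors at $a$, and $\gamma$ reaches $a$ through two \emph{distinct} $\set{u<r}$-sectors; since the sectors alternate, each of the two complementary components of $\gamma$ therefore contains at least one $\set{u>r}$-sector germ at $a$. Now apply the maximum principle on whichever component avoids the singularity (the point $z$, or the boundary component $A_i$). If that component is the interior of $\gamma$, argue exactly as in Proposition \ref{PKFullRank}. If it is the exterior, note that the exterior, viewed as a subdomain of $E$, has boundary $\gamma\cup\partial A_0$, on which $u\leq r$ (recall $u\to 0$ on $\partial A_0$ and $u\geq 0$); $u$ is bounded, continuous, and ER-harmonic there, so Lemma \ref{ERBMMaximalprinciple} (or Proposition \ref{ERHarmonicFromERBM}) gives $u\leq r$ on the exterior, contradicting the presence of a $\set{u>r}$-sector. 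With this modification your argument closes in all cases.
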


\begin{proposition} \label{GFLevelCurveJordan}
If $r$ is a positive real number, then
\[ \set{w:G^{ER}_D\paren{z,w}=r} \]
is a Jordan curve $\gamma_r$.  Furthermore, $\gamma_r$ separates $D$ into two connected components
\[ \set{w: G^{ER}_D\paren{z,w}<r} \text{ and } \set{w: G^{ER}_D\paren{z,w}>r }.\]
\end{proposition}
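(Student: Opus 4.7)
The plan is to mimic the proof of Proposition \ref{PKLevelCurveJordan}, with the interior singularity $z$ of $G_D^{ER}\paren{z,\cdot}$ playing the role that the boundary singularity $w$ of $H_D^{ER}\paren{\cdot,w}$ played there. As in that proof, I would first handle the generic case $r \neq G_D^{ER}\paren{A_i,z}$ for every $1 \leq i \leq n$; the non-generic case is handled analogously by interpreting the level set in the compactified space $E$.

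The first step is to understand the local structure. By Proposition \ref{GFFullRank} and the regular value theorem, $K_r := \set{w \in D\backslash\set{z} : G_D^{ER}\paren{z,w}=r}$ is a smooth one-dimensional submanifold of $D\backslash\set{z}$, so each connected component is diffeomorphic to either $S^1$ or $\R$. A properly embedded copy of $\R$ would have to accumulate somewhere on the topological boundary of $D\backslash\set{z}$ inside $E_{\partial}$, but $G_D^{ER}\paren{z,\cdot} \to 0$ on $\partial A_0$ (by Proposition \ref{ERBMGreenUnique}), $G_D^{ER}\paren{z,w}\to \infty$ as $w \to z$, and by our genericity assumption $G_D^{ER}\paren{z,A_i} \neq r$ for each $i$. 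Hence no $\R$-component exists, and every component of $K_r$ is a Jordan curve.

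The main step is existence and uniqueness of $\gamma_r$. The asymptotic $G_D^{ER}\paren{z,w} = -\log\abs{z-w}/\pi + O\paren{1}$ shows that $G_D^{ER}\paren{z,\cdot} > r$ on a sufficiently small punctured neighborhood of $z$, so at least one Jordan component $\gamma_r$ of $K_r$ encloses $z$. Suppose for contradiction that a second Jordan component $\gamma'$ existed. Then either $z$ lies outside the interior of $\gamma'$, or $\gamma'$ and $\gamma_r$ are nested around $z$. In the first case, let $U$ be the bounded region that $\gamma'$ cuts out of $E$ (it may contain some $A_j$'s, but not $z$); then $G_D^{ER}\paren{z,\cdot}$ is ER-harmonic on $U$ with constant boundary value $r$, so Lemma \ref{ERBMMaximalprinciple} applied to both $G_D^{ER}\paren{z,\cdot}-r$ and $r-G_D^{ER}\paren{z,\cdot}$ (with $\gamma'$ playing the role of $\partial A_0$) forces $G_D^{ER}\paren{z,\cdot} \equiv r$ on $U$; the identity theorem for harmonic functions then gives $G_D^{ER}\paren{z,\cdot} \equiv r$ on $D\backslash\set{z}$, contradicting the blow-up at $z$. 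The nested case is ruled out identically by applying the same principle to the annular region between $\gamma'$ and $\gamma_r$.

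Finally, by the Jordan curve theorem $\gamma_r$ cuts $\C$ into an interior and an exterior; the interior contains $z$ (where $G > r$) and the exterior contains $\partial A_0$ (where $G = 0 < r$), so by continuity the two connected components of $D\backslash \gamma_r$ are exactly $\set{w: G_D^{ER}\paren{z,w}>r}$ and $\set{w: G_D^{ER}\paren{z,w}<r}$. I expect the main obstacle to be the non-generic case where $r=G_D^{ER}\paren{z,A_i}$ for some $i$: there $A_i$ lies on the level set and one must view $\gamma_r$ as a Jordan curve in the quotient space $E$ passing through $A_i$, but the maximum-principle argument above still rules out any extra components, so the conclusion carries through.
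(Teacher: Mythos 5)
Your proposal is correct and takes essentially the approach the paper intends: the paper omits this proof, stating only that it parallels Proposition \ref{PKLevelCurveJordan}, and your argument is a faithful adaptation of that proof, with the interior logarithmic singularity at $z$ (where $G_D^{ER}\paren{z,\cdot}\to\infty$) playing the role that the boundary asymptotics and parabola argument played for the Poisson kernel, and with the maximum principle excluding extra components just as in the argument of Lemma \ref{PKLSConnected}. No gaps worth flagging.
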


It is a classical theorem of complex analysis \cite{MR510197} that any $D \in \mathcal{Y}$ is conformally equivalent to a bilateral standard domain.  Using $G_D^{ER}\paren{A_i,\cdot}$, we can give a new proof of this fact.

\begin{theorem}\label{bilateralGreenERBM}
Let $D \in \mathcal{Y}_n$ and suppose that there is no Jordan curve in $D$ with $A_0$ in its interior.  If $u=\pi G_D^{ER}\paren{A_i,\cdot}$, then there is a bilateral standard domain $D'$ and a conformal map $f=e^{-\paren{u+iv}}$ from $D$ onto $D'$.  Furthermore, if $g$ is another conformal map from $D$ onto a bilateral standard domain $D''$ and $g$ maps $\partial A_i$ onto the inner radius of $D''$ and $\partial A_0$ onto the outer radius of $D''$, then $f$ and $g$ differ by a rotation.
\end{theorem}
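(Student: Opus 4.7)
The plan is to mirror the proof of Theorem \ref{PKConformalMapTheorem}, with $G_D^{ER}\paren{A_i,\cdot}$ playing the role of $H_D^{ER}\paren{\cdot,w}$. The new wrinkle is that, while $u := \pi G_D^{ER}\paren{A_i,\cdot}$ is ER-harmonic on $D - \set{A_i}$, its harmonic conjugate has a nontrivial period around $A_i$. Consequently, rather than exhibiting the conformal map as $u + iv$, one exhibits it as $f := e^{-\paren{u+iv}}$, which will be single-valued because the period contributes a factor of $e^{2\pi i} = 1$.

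First I would use Proposition \ref{ERBMGreenERHarmonic} to note that $u$ is ER-harmonic on $D-\set{A_i}$, and read off from \eqref{GreenERBMNormalInt} that $\int_{\eta_j} \frac{d}{dn}\,u\,\abs{dw} = 0$ for $j \neq i$ and $-2\pi$ for $j = i$. Since the period of the harmonic conjugate $v$ around a closed curve $\eta$ equals $\int_\eta \frac{d}{dn}\,u\,\abs{dw}$, the function $v$ is single-valued around every $\eta_j$ with $j\neq i$ and has period $-2\pi$ around $\eta_i$. Hence $-\paren{u+iv}$ has period $2\pi i$ around $\eta_i$, and $f = e^{-\paren{u+iv}}$ is a well-defined, nowhere-zero holomorphic function on $D$. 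From Proposition \ref{ERBMGreenUnique} the modulus $\abs{f} = e^{-u}$ extends continuously to $E$ with $\abs{f}\equiv 1$ on $\partial A_0$, $\abs{f}\equiv r_0 := e^{-\pi G_D^{ER}\paren{A_i,A_i}} \in \paren{0,1}$ on $\partial A_i$, and $\abs{f}\equiv e^{-\pi G_D^{ER}\paren{A_i,A_j}}$ (a value strictly between $r_0$ and $1$) on each $\partial A_j$ with $j\neq 0,i$.

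The main step---and the main obstacle---is showing that $f$ is injective with image a bilateral standard domain. For this I would invoke Propositions \ref{GFFullRank} and \ref{GFLevelCurveJordan}: $u$ has no critical points in $D$, and for each admissible $r$ the level set $\gamma_r = \set{u = \pi r}$ is a Jordan curve enclosing $A_i$. On $\gamma_r$ we have $\abs{f}\equiv e^{-\pi r}$, and because $\gamma_r$ is homotopic to $\eta_i$ the argument of $f$ increases by exactly $2\pi$ around $\gamma_r$; the absence of critical points of $u$ forces this increase to be strictly monotone, so $f$ restricts to a homeomorphism of $\gamma_r$ onto the entire circle $\set{\abs{z}=e^{-\pi r}}$. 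Foliating $D$ by these curves as $r$ ranges over $\paren{0,\pi G_D^{ER}\paren{A_i,A_i}}$ then shows $f$ is globally injective, and that $f\paren{D}$ is $A_{r_0,1}$ with the images $f\paren{\partial A_j}$, $j \neq 0,i$, removed. Each such image is a proper arc (not the full circle), since $f\paren{D}$ is open and, by the previous sentence, contains points of every intermediate modulus; the special level values $r = \pi G_D^{ER}\paren{A_i,A_j}$ are handled exactly as in the proof of Theorem \ref{PKConformalMapTheorem}. Thus $f\paren{D}$ is a bilateral standard domain.

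Finally, for uniqueness up to rotation, suppose $g:D\to D''$ is another such conformal map. Then $-\frac{1}{\pi}\log\abs{g}$ is harmonic on $D$, extends continuously to $E$, and---because $g$ winds once around $\partial A_i$ and does not wind around any other $\partial A_j$---satisfies the hypotheses of Lemma \ref{ERHarmonicCondition} with the same period $-2\pi$ around $\eta_i$ as $u$. Hence the difference $-\tfrac{1}{\pi}\log\abs{g} - G_D^{ER}\paren{A_i,\cdot}$ has zero period around every $\eta_j$ and so extends to a bounded, continuous, ER-harmonic function on all of $E$ that vanishes on $\partial A_0$. The maximal principle for ER-harmonic functions (Lemma \ref{ERBMMaximalprinciple}) forces this difference to be identically zero, so $\abs{g} = \abs{f}$ on $D$. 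Then $g/f$ is a nowhere-zero holomorphic function of modulus one, hence a unimodular constant, giving $g = e^{i\alpha} f$ as required.
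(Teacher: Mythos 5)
Your proposal is correct and follows essentially the same route as the paper's proof: the harmonic conjugate $v$ is defined via the period integrals, \eqref{GreenERBMNormalInt} makes $f=e^{-\paren{u+iv}}$ single-valued, Propositions \ref{GFFullRank} and \ref{GFLevelCurveJordan} give injectivity on each level curve $\gamma_r$ and hence globally, and uniqueness reduces via Lemma \ref{ERHarmonicCondition} to the characterization of $G_D^{ER}\paren{A_i,\cdot}$ (your version subtracts and applies the maximum principle, which is the same argument in substance). No gaps worth noting.
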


\begin{proof}
Fix $z_0\in D$ and let $v\paren{z_0}=x \in \R$ and
\begin{equation}\label{BLFunctionDef}
v\paren{z}=v\paren{z_0} + \int_{\gamma} \frac{d}{dn}~ u\paren{z} \abs{dz},
\end{equation}
where $\gamma$ is a smooth curve connecting $z_0$ and $z$ and $n$ is a normal vector.  It follows from the Cauchy-Riemann equations that $u+iv$ is locally holomorphic.  Using \eqref{GreenERBMNormalInt}, we see  that $v$ is well-defined up to an integer multiple of $2\pi.$  It follows that $f=e^{-\paren{u+iv}}$ is a well-defined holomorphic function on $D$.

By an extension of the maximal principle for ERBM, $u$ attains its maximum on $\partial A_i$.  Using this, we see that the image of $f$ is contained in the annulus of inner radius $e^{-u\paren{A_i}}$ and outer radius $1$.  By Proposition \ref{GFLevelCurveJordan}, $\set{z \in D:u\paren{z}=r}$ is a Jordan curve $\gamma_r$.  By \eqref{GreenERBMNormalInt},
\[ \int_{\gamma_r} \frac{d}{dn}~ u\paren{z} \abs{dz}=-2\pi.\]
It follows that if $r \neq u\paren{A_j}$ for any $j$, then $f$ maps $\gamma_r$ injectively onto the circle of radius $e^{-r}$ and if $r=u\paren{A_j}$ for some (or several) $j$, then $f$ maps $\gamma_r$ injectively onto the circle of radius $e^{-r}$ with one (or several) arc(s) removed.  Putting all of this together, we see that $f$ is a conformal map onto a bilateral standard domain.

Suppose $g=e^{-\paren{u+iv}}$ is another conformal map from $D$ onto a bilateral standard domain $D''$ and $g$ maps $\partial A_i$ onto the inner radius of $D''$ and $\partial A_0$ onto the outer radius of $D''$.  To prove the uniqueness statement of the theorem, it is enough to show that $u=\pi G_D^{ER}\paren{A_i,\cdot}$.  Observe that $-\log\paren{g}$ is a locally holomorphic, multi-valued function well-defined up to an integer multiple of $2\pi i$.  As a result, $u$ is a well-defined harmonic function.  Let $\eta_j$ for $j\neq i$ be a Jordan curve surrounding $A_j$ whose interior contains no point of $A_k$ for $j\neq k$.  On the interior of $\eta_j$, $u+iv$ is a well-defined holomorphic map and as a result,
\[\int_{\eta_j'} \frac{d}{dn}~ u\paren{z} \abs{dz}=0\]
for any Jordan curve $\eta_j'$ surrounding $A_j$ and in the interior of $\eta_j$.  We conclude by Lemma \ref{ERHarmonicCondition} that $u$ is ER-harmonic on $D \backslash {A_i}$ and since it is equal to zero on $\partial A_0$, it must be a multiple of $G_D^{ER}\paren{A_i,\cdot}$.  Using \eqref{GreenERBMNormalInt}, it is easy to see that the only multiple that will work is $\pi$.
\end{proof}

Using $G_D^{ER}\paren{z,\cdot}$ instead of $G_D^{ER}\paren{A_i,\cdot}$, we can prove another classical conformal mapping theorem.

\begin{theorem}
Let $D \in \mathcal{Y}_n$ and suppose that there is no Jordan curve in $D$ with $A_0$ in its interior.  If $u=\pi G_D^{ER}\paren{z,\cdot}$, then there is a standard domain $D'$ and a conformal map $f=e^{-\paren{u+iv}}$ from $D$ onto $D'$.  Furthermore, if $g$ is another conformal map from $D$ onto a bilateral standard domain that sends $z$ to $0$, then $f$ and $g$ differ by a rotation.
\end{theorem}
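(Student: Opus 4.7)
The plan is to follow the proof of Theorem \ref{bilateralGreenERBM} almost verbatim, with the essential difference being that $u=\pi G_D^{ER}\paren{z,\cdot}$ now has a logarithmic singularity at $z$ in addition to the level-set behavior at the $A_j$. I would set $u=\pi G_D^{ER}\paren{z,\cdot}$ and define $v$ on $D \backslash \set{z}$ by the path integral \eqref{BLFunctionDef}, so that $u+iv$ is locally holomorphic by the Cauchy-Riemann equations. The well-definedness of $v$ modulo $2\pi$ reduces, as in Theorem \ref{bilateralGreenERBM}, to computing $\int_{\gamma} \frac{d}{dn}~u\paren{w}~\abs{dw}$ for closed curves $\gamma$. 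Since $u$ is ER-harmonic on $D\backslash \set{z}$, Lemma \ref{ERHarmonicCondition} shows this integral vanishes on any loop surrounding only one $A_j$ (with $j\neq 0$). For a small loop around $z$, the asymptotic $u\paren{w}=-\log \abs{z-w}+O\paren{1}$ from Proposition \ref{ERBMGreenUnique}, together with the harmonicity of the error term, gives $\int_{\gamma}\frac{d}{dn}~u~\abs{dw}=-2\pi$. Hence $v$ is well-defined modulo $2\pi$ and $f=e^{-\paren{u+iv}}$ is a single-valued holomorphic function on $D$, with removable singularity $f\paren{z}=0$.

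Next I would establish the target: as $w\to\partial A_0$, $u\paren{w}\to 0$, so $\abs{f}\to 1$; and $u>0$ on $D$ by the maximum principle for ER-harmonic functions, so $\abs{f}<1$ throughout. For the image and injectivity, I would invoke Proposition \ref{GFLevelCurveJordan}: for each $r>0$ the level set $\gamma_r=\set{w\in D:u\paren{w}=r}$ is a Jordan curve separating $D$ into $\set{u<r}$ and $\set{u>r}$. Using Proposition \ref{GFFullRank}, the normal derivative $\frac{d}{dn}~u$ has constant sign along $\gamma_r$, so as $w$ traverses $\gamma_r$ the argument of $f$, which differs from $-v$ by the integer multiples of $2\pi$ introduced above, is strictly monotone; since the total change around $\gamma_r$ is $\int_{\gamma_r}\frac{d}{dn}~u~\abs{dw}=-2\pi$ (same computation as above), $f$ maps $\gamma_r$ bijectively onto the full circle of radius $e^{-r}$ whenever $r\neq \pi G_D^{ER}\paren{z,A_j}$ for any $j$, and onto that circle minus finitely many arcs (the images of the $A_j$ with $u\paren{A_j}=r$) otherwise. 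Taking the union over $r$ shows $f$ is injective and that $f\paren{D}$ is a standard domain.

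For uniqueness, suppose $g$ is any conformal map of $D$ onto a standard domain with $g\paren{z}=0$. Then $-\log \abs{g}$ is locally well-defined and harmonic on $D \backslash \set{z}$, vanishes on $\partial A_0$, is constant on each $\partial A_j$, and satisfies $-\log\abs{g\paren{w}}=-\log \abs{z-w}+O\paren{1}$ as $w\to z$. Because $g$ is globally holomorphic on $D \backslash \set{z}$, the integral $\int_{\eta_j'} \frac{d}{dn}~\paren{-\log \abs{g}}~\abs{dw}=0$ for any curve $\eta_j'$ surrounding only $A_j$ (with $j\neq 0$), so by Lemma \ref{ERHarmonicCondition} the function $-\log\abs{g}$ is ER-harmonic on $D\backslash\set{z}$. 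Proposition \ref{ERBMGreenUnique} then forces $-\log \abs{g}=u$, whence $g/f$ is holomorphic on $D$ of modulus one and therefore a unimodular constant; that is, $g$ and $f$ differ by a rotation.

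The main obstacle is the step of showing that $f$ maps each level curve $\gamma_r$ bijectively onto its circular image, in the borderline cases $r=\pi G_D^{ER}\paren{z,A_j}$: one has to be careful that the level curve passes through $A_j$ (in the topology of $E$) and that the argument of $f$ still changes monotonically around the curve, with the gaps corresponding exactly to the arcs removed in the target standard domain. Everything else is a direct adaptation of the argument for Theorem \ref{bilateralGreenERBM}.
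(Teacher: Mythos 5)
Your proposal is correct and follows exactly the route the paper intends: the paper's own proof is omitted with the remark that it is ``similar to that of Theorem \ref{bilateralGreenERBM},'' and your adaptation (replacing the period $-2\pi$ contribution at $A_i$ by the logarithmic singularity at $z$, using Propositions \ref{ERBMGreenUnique}, \ref{GFFullRank}, and \ref{GFLevelCurveJordan} in place of their counterparts, and concluding uniqueness via $|g/f|\equiv 1$) is the intended argument. No substantive differences to report.
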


\begin{proof}
The proof is similar to that of Theorem \ref{bilateralGreenERBM} and is omitted.
\end{proof}




\bibliography{paper}
\bibliographystyle{amsplain}
\end{document}